\newenvironment{claim}[1][]{%
\begin{description}[leftmargin=0pt,before={}]%
\ifthenelse{\isempty{#1}}{\item[Claim]}{\item[Claim {#1}]}
}%
{\end{description}}
\definecolor{linkblue}{RGB}{1,1,190}
\definecolor{citegreen}{RGB}{1,190,1}
\newcommand{\raw}{\rightarrow}
\newcommand{\ra}{\rightarrow}
\DeclarePairedDelimiter{\card}{\lvert}{\rvert}
\newcommand{\x}{\mathbf{x}}
\newcommand{\y}{\mathbf{y}}
\newcommand{\Mut}{\mathcal{M}}
\newcommand{\Z}{\mathbb{Z}}
\newcommand{\Q}{\mathbb{Q}}
\newcommand{\Spec}{\mathrm{Spec}}
\newcommand{\ps}{V}
\newcommand{\prim}{\mathfrak{p}}
\newcommand{\qrim}{\mathfrak{q}}
\newcommand{\Xx}{\mathfrak{X}}
\newcommand{\cI}{\mathcal{I}}
\newcommand{\cF}{\mathcal{F}}
\newcommand{\val}{\mathsf{v}}
\newcommand{\cH}{\mathcal{H}}
\newcommand{\cC}{\mathcal{C}}
\newcommand{\cU}{\mathcal{U}}
\newcommand{\fa}{\mathfrak{a}}
\newcommand{\fb}{\mathfrak{b}}
\newcommand{\bgcd}{\mathsf d}  
\newcommand{\dcount}{\mathsf c}
\newcommand{\vexp}{\mathsf e}
\newcommand{\ptns}[1]{P({#1})}  
\newcommand{\PP}[2]{\prim_{#1}({#2})}
\newcommand{\fp}{\mathfrak p}
\newcommand{\fq}{\mathfrak q}
\DeclareMathOperator{\im}{im}
\DeclareMathOperator{\ord}{ord}
\newcommand{\vprod}{\sideset{}{\mkern -2mu\raisebox{0.2em}{$\vphantom{\prod}^v$}} \prod}
\newcommand{\inv}{\mathbf{inv}}
\newtheorem{theo}{Theorem}[section]
\newtheorem{lemm}[theo]{Lemma}
\newtheorem{coro}[theo]{Corollary}
\newtheorem{prop}[theo]{Proposition}
\newtheorem{maintheo}{Theorem}
\theoremstyle{definition}
\newtheorem{exam}[theo]{Example}
\newtheorem{rema}[theo]{Remark}
\newtheorem{ques}[theo]{Question}
\newtheorem{defi}[theo]{Definition}
\setlist[enumerate,1]{label=\textup{(\arabic*)}, ref=\textup{(}\arabic*\textup{)}, leftmargin=0.75cm}
\setlist[itemize,1]{leftmargin=0.75cm}
\newlist{equivenumerate}{enumerate}{1}
\setlist[equivenumerate,1]{%
  label=\textup{(\alph*)},
  ref=\textup{(}\alph*\textup{)},
  leftmargin=0.75cm
}
\begin{document}

\keywords{cluster algebras, Krull domains, class groups, non-unique factorizations, unique factorization domains}
\subjclass[2010]{Primary 13F60; Secondary 13A15, 13F05, 13F15.}

\title{Factoriality and class groups of cluster algebras}

\author{Ana Garcia Elsener}
\address{University of Graz\\
        Institute for Mathematics and Scientific Computing\\
         NAWI Graz\\
         Heinrichstrasse 36\\
         8010 Graz, Austria}
\email{elsener@mdp.edu.ar}

\author{Philipp Lampe}
\address{Department of Mathematical Sciences\\
Durham University\\
Science Laboratories\\
South Road\\
Durham, DH1 3LE\\
United Kingdom}
\email{p.b.lampe@kent.ac.uk}

\author{Daniel Smertnig}
\address{Department of Mathematics\\
6188 Kemeny Hall\\
Dartmouth College\\
Hanover, NH 03755-3551\\
USA}
\email{daniel@smertnig.at}

\begin{abstract}
  Locally acyclic cluster algebras are Krull domains.
  Hence their factorization theory is determined by their (divisor) class group and the set of classes containing height-$1$ prime ideals.
  Motivated by this, we investigate class groups of cluster algebras.
  We show that any cluster algebra that is a Krull domain has a finitely generated free abelian class group, and that every class contains infinitely many height-$1$ prime ideals.
  For a cluster algebra associated to an acyclic seed, we give an explicit description of the class group in terms of the initial exchange matrix.
  As a corollary, we reprove and extend a classification of factoriality for cluster algebras of Dynkin type.
  In the acyclic case, we prove the sufficiency of necessary conditions for factoriality given by Geiss--Leclerc--Schr\"oer.
\end{abstract}

\maketitle

\section*{Introduction and summary of results}

Cluster algebras were introduced and studied in a series of articles by Fomin and Zelevinsky in \cite{FZ1,FZ2,FZ4} and by Berenstein--Fomin--Zelevinsky in \cite{BFZ}.
They admit connections to several branches of mathematics such as representation theory, geometry, and combinatorics.
A large number of articles in cluster theory study linear bases of cluster algebras. In most applications however---for example in representation theory and in hyperbolic geometry---the multiplicative structure is of prime importance.
For this reason we study ring-theoretic properties of cluster algebras, in particular those related to their factorization theory. The factorization theory of cluster variables also plays a crucial role in the proof of a main theorem about cluster algebras, Fomin--Zelevinsky's Laurent phenomenon, see \cite{FZ1}.
Further, Goodearl--Yakimov use special prime elements in their construction of quantum cluster algebra structures on quantum nilpotent algebras in \cite{GY1}, and in a commutative setting in their construction of cluster algebra structures on symmetric Poisson nilpotent algebras in \cite{GY2}.

Throughout the paper we consider cluster algebras of geometric type, allowing frozen variables.
However, we always assume that \emph{all frozen variables are invertible}.
The only exception to this is Section~\ref{sec:noninvertible}, where we discuss the case of non-invertible frozen variables.

A locally acyclic cluster algebra $A$ is a noetherian integrally closed domain, and therefore a Krull domain.
Krull domains are classical objects in commutative algebra (see \cite{Fos}).
They have been one of the main objects of study in factorization theory.
Thus, the factorization theory of a Krull domain $A$ is very well understood (see \cite{GHK,G}).
It is governed by the (divisor) class group $\cC(A)$ and the subset of classes of $\cC(A)$ containing height-$1$ prime ideals.
In this paper we investigate class groups of cluster algebras that are Krull domains.
As a domain is factorial if and only if it is a Krull domain with trivial class group, this constitutes a generalization of the study of factoriality of cluster algebras.

Factoriality of cluster algebras has been investigated before by Geiss--Leclerc--Schr\"oer in \cite{GLS}, who, in particular, have given necessary conditions for a cluster algebra to be factorial.
Factoriality in cluster algebras of Dynkin type $A$, $D$, $E$ was classified in the (unpublished) preprint \cite{L1} by Lampe.
The connection with Krull domains was first observed in \cite{L2}.

The results in the present paper give a full description of the class group of an acyclic cluster algebra in terms of its initial seed.
Class groups of Dynkin types (and other important families of acyclic seeds) are easily computed from these results; in particular, we recover the classification of factoriality for Dynkin type $A$, $D$, $E$ and extend it to all Dynkin types (see Corollary~\ref{coro:dynkin}).
Cluster algebras with an acyclic seed and principal coefficients turn out to always be factorial (see Corollary~\ref{coro:principal-factorial}).

We show that the necessary condition for factoriality of Geiss--Leclerc--Schr\"oer is sufficient for an acyclic seed (see Theorem~\ref{Thm:AcyclicFactorial}).
For, not necessarily acyclic, cluster algebras that are Krull domains, our first main result shows that their class group is always a finitely generated free abelian group.

\begin{maintheo}\label{Thm:MainGeneral}
  Let $\Sigma = (\x,\y,B)$ be a seed with exchangeable variables $\x=(x_1,\ldots, x_n)$ and frozen variables $\y=(x_{n+1}, \ldots, x_{n+m})$.
  Let $A=A(\Sigma)$ be the cluster algebra associated to $\Sigma$, in which we assume that all frozen variables are invertible.
  Suppose that $A$ is a Krull domain, and let $t \in \Z_{\ge 0}$ denote the number of height-$1$ prime ideals that contain one of the exchangeable variables $x_1$, $\ldots\,$,~$x_n$.
  Then the class group of $A$ is a free abelian group of rank $t - n$.

  If $n+m > 0$, that is $A \ne K$, then each class contains exactly $\card{K}$ height-$1$ prime ideals.
\end{maintheo}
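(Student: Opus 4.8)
The plan is to analyze the cluster algebra $A$ via a carefully chosen overring where the structure becomes transparent, namely the localization $A' = A[x_1^{-1}, \ldots, x_n^{-1}]$ obtained by inverting all cluster variables of the initial seed. Since all frozen variables are already invertible in $A$, this localization is exactly the Laurent polynomial ring $K[x_1^{\pm 1}, \ldots, x_{n+m}^{\pm 1}]$, with $K$ the ground field; indeed, $A \subseteq A'$ by the Laurent phenomenon, and $A'$ contains all $x_i^{\pm 1}$, so $A'$ is this Laurent ring, which is a UFD with trivial class group. The first main step is to record the standard nagata-type exact sequence for the class group under such a localization: if $S$ is the multiplicative set generated by $x_1, \ldots, x_n$, then there is an exact sequence
\[
  \bigoplus_{\fp} \Z \cdot [\fp] \;\lraw\; \cC(A) \;\lraw\; \cC(A_S) = \cC(A') = 0,
\]
where the direct sum runs over the height-$1$ primes $\fp$ of $A$ meeting $S$, i.e. the $t$ height-$1$ primes containing some $x_i$. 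This already shows $\cC(A)$ is generated by these $t$ classes, hence finitely generated; the content of the theorem is to pin down the kernel of the surjection $\Z^t \to \cC(A)$.

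The second step is to identify that kernel. Each $x_i$ is a nonzero nonunit of the Krull domain $A$, so its principal divisor $\operatorname{div}(x_i) = \sum_{\fp} \ord_\fp(x_i)\, \fp$ is a nonzero element of the free abelian group $\mathcal{D}(A)$ on height-$1$ primes, supported precisely on the primes containing $x_i$ — that is, supported among our $t$ primes. The kernel of $\Z^t \to \cC(A)$ is exactly the subgroup of $\mathcal{D}^{(t)}(A) := \bigoplus_{\fp \mid \text{some }x_i} \Z\,\fp$ consisting of principal divisors; and since $A' = A_S$ is a UFD, every height-$1$ prime of $A$ not meeting $S$ stays prime in $A'$ and conversely the $t$ primes meeting $S$ become units there, one checks that a divisor supported on the $t$ special primes is principal in $A$ iff it is the divisor of an element of $S$-localized units times a unit, which forces it to lie in the subgroup generated by $\operatorname{div}(x_1), \ldots, \operatorname{div}(x_n)$. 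So $\cC(A) \cong \Z^t / \langle \operatorname{div}(x_1), \ldots, \operatorname{div}(x_n)\rangle$. The key point making this clean is that $x_1, \ldots, x_n$ are part of a transcendence basis / form a regular sequence-like system: the divisors $\operatorname{div}(x_1), \ldots, \operatorname{div}(x_n)$ are $\Z$-linearly independent in $\mathcal{D}^{(t)}(A)$. This independence I would extract from the fact that the $x_i$ are algebraically independent and each $x_i^{-1}$ genuinely enters $A'$, so no monomial $\prod x_i^{a_i}$ with $(a_i) \ne 0$ can be a unit of $A$ — equivalently $\sum a_i \operatorname{div}(x_i) \ne 0$. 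Granting independence, the quotient $\Z^t/\Z^n$ is free abelian of rank $t - n$, which gives the first assertion.

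For the second assertion — that if $n+m>0$ then every class contains exactly $\card{K}$ height-$1$ primes — the strategy is first to reduce "exactly $\card{K}$" to "infinitely many, and at most $\card{K}$". The upper bound $\card{K}$ is cheap: $A$ is a finitely generated $K$-algebra (a subalgebra of the Laurent ring over the finitely many $x_i^{\pm1}$ and finitely many exchange relations), so it has at most $\card{K} = \card{A}$ prime ideals when $K$ is infinite, and when $K$ is finite the relevant statement degenerates appropriately; in all cases $\card{K}$ is the cardinality of $A$ since $n+m>0$ forces $A \supsetneq K$ and $A$ to be infinite. The real work is the lower bound: every class contains at least one — hence, by a standard trick, infinitely many — height-$1$ prime ideal. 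For the trivial class this is automatic provided $A$ is not a field and not already "too small"; for a nontrivial class one uses a prime-avoidance / approximation argument in the Krull domain together with the explicit generating set $[\fp_1], \ldots, [\fp_t]$: given a target class $c$, write $c = \sum a_j [\fp_j]$ and use the surjectivity plus a Chinese-Remainder-style construction inside $A_S = A'$ (a PID-like Laurent ring) to produce an element whose divisor realizes a single new height-$1$ prime in class $c$. This is where I expect the main obstacle: controlling that the element produced has a prime divisor lying in the desired class and not accidentally splitting into several primes or landing among the $\fp_j$. The cleanest route is probably to invoke a general theorem (of Claborn / Grams type, or its Krull-domain refinement) stating that in a Krull domain that is a localization of a finitely generated algebra over a field with trivial or free class group, each class is "well-distributed", i.e., contains exactly $\card{A}$ primes; adapting that machinery, rather than reproving it, is the efficient path.
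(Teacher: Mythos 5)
There is a genuine gap at the decisive step of the first assertion. Your argument correctly reduces to showing that $\cC(A) \cong \Z^t/\langle \mathbf a_1,\ldots,\mathbf a_n\rangle$, where $\mathbf a_i \in \Z^t$ records the divisorial factorization of $x_iA$, and your justification that the $\mathbf a_i$ are $\Z$-linearly independent (no nontrivial monomial in the exchangeable variables is a unit) is fine. But from linear independence you conclude only that the quotient has \emph{free rank} $t-n$; you cannot conclude that it is \emph{free}. A quotient of $\Z^t$ by a rank-$n$ subgroup generated by linearly independent vectors can have torsion (e.g.\ $\Z/\langle 2\rangle$), and the freeness --- i.e.\ torsion-freeness of the class group, which is precisely Lampe's conjecture that this theorem settles --- is the hard content. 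The paper closes this gap with Lemma~\ref{Lemma:IndicatorPrimes}: for each $i \in [1,n]$ there is a height-$1$ prime $\prim$ with $\val_\prim(x_i)=1$ and $\val_\prim(x_j)=0$ for all $j \ne i$. This prime is produced by mutating in direction $i$, localizing at the mutated cluster to land in a Laurent polynomial ring, and taking the contraction of a prime factor of the exchange polynomial $f_i$; the exponent is exactly $1$ because exchange polynomials have no repeated factors (Proposition~\ref{prop:exchpoly}). This forces the relation matrix $(a_{ij})$ to contain an $n\times n$ identity submatrix up to column permutation, so each relation eliminates a distinct generator integrally and the quotient is genuinely free of rank $t-n$. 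Your proposal contains no substitute for this step.

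On the second assertion, your plan is to quote a Claborn--Grams-type distribution theorem; the paper instead proves the statement directly (Theorem~\ref{Thm:PrimeDistribution}) via the approximation property and an Eisenstein-criterion construction of prime elements $(x_1\cdots x_n)^N + pa$ in the Laurent ring, after first rewriting an arbitrary class as a non-positive combination of the $[\prim_j]$ using the relations $\mathbf a_i$. The paper remarks that Kainrath's general theorem yields only that each class contains infinitely many height-$1$ primes, not the exact cardinality $\card{K}$, so the off-the-shelf machinery you hope to invoke does not quite deliver the stated conclusion; some version of the explicit construction is needed.
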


Since every locally acyclic cluster algebra is a Krull domain, the previous theorem is applicable to them.
The theorem also has a generalization allowing non-invertible frozen variables, see Theorem~\ref{Thm:MainGeneralNonInvertible}.

These results imply a strong dichotomy between the factorization theory of a factorial and a non-factorial cluster algebra: in a non-factorial cluster algebra that is a Krull domain, all arithmetical invariants are infinite and any finite subset $L \subseteq \Z_{\ge 2}$ can be realized as a set of lengths of some element (see Corollary~\ref{coro:lengths}).
We also give an example of a cluster algebra that is not a Krull domain in Theorem~\ref{Thm:Markov}.

We now state a simplified version of our second main result on class groups of cluster algebras, which allows us to determine the rank of the class group in the acyclic case.
We restrict to the base ring $\Z$ and assume that $\Sigma$ does not have any isolated exchangeable indices; see Theorem~\ref{Thm:MainAcyclic} for the general result.
Before stating the theorem we need to introduce some notation.

Let $B=(b_{ij})$ denote a $(n+m)\times n$ integer matrix with skew-symmetrizable principal part.
To $B$ we associate an ice quiver $\Gamma(B)$ having set of vertices $[1,n+m]$, with $[n+1,n+m]$ frozen vertices.
When the principal part of $B$ is skew-symmetric, $\Gamma(B)$ coincides with the usual quiver associated to $B$ in cluster theory.
The exchange matrix $B$ is \emph{acyclic} if the full exchangeable sub-quiver of $\Gamma(B)$ does not contain an oriented cycle.
An index $i \in [1,n]$ is \emph{isolated} if $i$ has no neighbors in $\Gamma(B)$.
We write $\bgcd_j = \gcd( b_{ij} \mid i \in [1,n+m] )$ for the greatest common divisor of the $j$-th column of $B$.
Two indices $i$,~$j \in [1,n]$ are \emph{partners} if their exchange polynomials share a non-trivial common factor;
this can be expressed in terms of the exchange matrix $B$ (see Definition~\ref{defi:partners} and the discussion following it).
Partnership is an equivalence relation on $[1,n]$ and the equivalence classes are called \emph{partner sets}.

\begin{maintheo}[Special case of Theorem~\ref{Thm:MainAcyclic}] \label{Thm:MainAcyclicSimplified}
  Let $\Sigma = (\x,\y,B)$ be an acyclic seed with exchangeable variables $\x=(x_1,\ldots, x_n)$ and frozen variables $\y=(x_{n+1}, \ldots, x_{n+m})$.
  Let $A=A(\Sigma)$ be the cluster algebra associated to $\Sigma$, with base ring $\Z$ and all frozen variables invertible.
  Assume that $\Sigma$ does not contain any isolated exchangeable indices.
  For a partner set $\ps \subseteq [1,n]$ and $d \in \Z_{\ge 1}$, let $\dcount(\ps,d)$ denote the number of $i \in V$ for which $d$ divides $\bgcd_i$.

Then, the class group of $A$ is a finitely generated free abelian group of rank
  \[
    r= \sum_{V} \sum_{\substack{d \in \Z_{\ge 1}\\ d \text{ odd}}} \big(2^{\dcount(\ps,d)} - 1)  - \card{V},
  \]
  where the outer sum is taken over all partner sets $V \subseteq [1,n]$.
\end{maintheo}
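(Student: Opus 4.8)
The plan is to derive the rank formula from Theorem~\ref{Thm:MainGeneral}, which identifies $\cC(A)$ with a free abelian group of rank $t-n$, where $t$ is the number of height-$1$ prime ideals of $A$ that contain one of $x_1,\dots,x_n$. Since the partner sets $V\subseteq[1,n]$ partition $[1,n]$, we have $n=\sum_V\card{V}$, so the theorem follows once we establish the counting identity
\[
  t\;=\;\sum_{V}\ \sum_{\substack{d\in\Z_{\ge 1}\\ d\text{ odd}}}\bigl(2^{\dcount(V,d)}-1\bigr),
\]
the outer sum over all partner sets; then $r=t-n$ is exactly the claimed rank. (Note $\dcount(V,1)=\card{V}$, so the $d=1$ summand alone already contributes $2^{\card{V}}-1$.)

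I would first fix a concrete model. By the standard structure theorem for acyclic cluster algebras (the cluster algebra equals its Berenstein--Fomin--Zelevinsky lower bound), and because every frozen variable is inverted,
\[
  A\;\cong\;R[x_1,\dots,x_n,x_1',\dots,x_n']\big/\bigl(x_ix_i'-F_i:1\le i\le n\bigr),\qquad R=\Z[x_{n+1}^{\pm1},\dots,x_{n+m}^{\pm1}],
\]
where $F_i=P_i^{\bgcd_i}+Q_i^{\bgcd_i}$ is the $i$-th exchange polynomial and $P_i=\prod_{b_{ki}>0}x_k^{b_{ki}/\bgcd_i}$, $Q_i=\prod_{b_{ki}<0}x_k^{-b_{ki}/\bgcd_i}$ are coprime monomials not involving $x_i$, the greatest common divisor of all their exponents being $1$. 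Inverting $x_1,\dots,x_n$ turns $A$ into the Laurent polynomial ring $L=\Z[x_1^{\pm1},\dots,x_{n+m}^{\pm1}]$, a UFD; this is the localization underlying Theorem~\ref{Thm:MainGeneral}, and the ring in which the combinatorics below takes place. The no-isolated-index hypothesis enters at this point: it keeps each $F_i$ a genuine nonconstant binomial (an isolated index would give $F_i=1+1=2$, making $2$ a nonprime of $A$ and $\bgcd_i$ ill-defined), and it forces each $x_i$ to be a nonzero nonunit of $A$.

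The combinatorial input is the factorization of $F_i$ in $L$. Writing $\bgcd_i=2^{a_i}\mu_i$ with $\mu_i$ odd and homogenizing the identity $t^{d}+1=\prod_{e\mid 2d,\,e\nmid d}\Phi_e(t)$, one gets $F_i=\prod_{d'\mid\mu_i}G_{i,d'}$, where $G_{i,d'}$ is the homogenization of the cyclotomic polynomial $\Phi_{2^{a_i+1}d'}$ with respect to the monomials $(P_i,Q_i)$; each $G_{i,d'}$ is irreducible in $L$ by Gauss's lemma, the irreducibility of cyclotomic polynomials, and the content-$1$ property of $(P_i,Q_i)$. Thus the irreducible factors of $F_i$ are indexed by the odd divisors of $\bgcd_i$. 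Next I would recast partnership: $F_i$ and $F_j$ share an irreducible factor exactly when $a_i=a_j$ and $P_i/Q_i$ equals $P_j/Q_j$ or $Q_j/P_j$ up to a monomial in the frozen variables, in which case the shared factors are precisely the $G_{\cdot,d'}$ with $d'\mid\gcd(\mu_i,\mu_j)$. It follows that on a fixed partner set $V$ all members share one common $2$-adic valuation $a_V$ and one common ``core ratio'', and that for each odd $d$ the factor of cyclotomic index $2^{a_V+1}d$ occurs in $F_i$ exactly for the $\dcount(V,d)$ indices $i\in V$ with $d\mid\bgcd_i$.

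Finally I would enumerate the height-$1$ primes of $A$ that meet $\{x_1,\dots,x_n\}$. The mechanism is that $x_i\in\prim$ forces $F_i=x_ix_i'\in\prim$, and since $\prim$ has height $1$ this forces exactly one factor $G_{i,d'}$ of $F_i$ into $\prim$ (possibly together with $x_i'$); when $j$ is a partner of $i$ sharing that very factor, the relation $F_j=x_jx_j'$ then pulls $x_j$ or $x_j'$ into $\prim$ in turn. Making this precise --- by localizing $A$ at $\{\,x_j:j\notin S\,\}$, reducing modulo $(x_i:i\in S)$, using acyclicity to order $S$ along the exchangeable subquiver and induct, and the content-$1$ property to keep the resulting cyclotomic factors reduced and pairwise coprime --- one shows that a height-$1$ prime $\prim$ with $\{\,j:x_j\in\prim\,\}=S$ exists if and only if $S$ is a nonempty subset of a single partner set $V$ all of whose members share one common factor, equivalently $S\subseteq\{\,i\in V:d\mid\bgcd_i\,\}$ for some odd $d$, and that distinct admissible $S$ for a fixed pair $(V,d)$ give distinct primes. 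As the number of nonempty admissible $S$ for each $(V,d)$ is $2^{\dcount(V,d)}-1$, this is exactly the displayed value of $t$, and $r=t-n$ follows. I expect this last enumeration to be the main obstacle: it amounts to controlling the minimal primes over the non-monomial, cyclotomically factoring ideals $(x_i:i\in S)+(F_i:i\in S)$ inside the non-regular ring $A$ --- ruling out height-$1$ primes outside the $(V,d,S)$ family and checking that the family members genuinely have height $1$ and are pairwise distinct --- for which the cyclotomic factorization and the acyclic presentation are the essential tools, while the partnership analysis prescribes which $x_i$ can simultaneously vanish on a height-$1$ prime.
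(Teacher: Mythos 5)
Your proposal is correct and follows essentially the same route as the paper: reduce to counting $t$ via Theorem~\ref{Thm:MainGeneral}, factor each exchange polynomial into homogenized cyclotomics indexed by the odd divisors of $\bgcd_i$ (the paper's Lemma~\ref{lemma:poly} and Proposition~\ref{prop:exchpoly}), and parametrize the height-$1$ primes over the $x_i$ by pairs consisting of an irreducible factor $p$ and a nonempty subset of its partner set --- exactly the paper's ideals $\prim_{p}(I)$ from Proposition~\ref{Prop:PartnerIdeals} and Lemma~\ref{Lemma:PrimesAndClusterVars}, established there by the same localize-and-induct strategy you sketch. The one small imprecision is that these primes are indexed by the pair $(d,S)$ rather than by $S$ alone (a single subset $S$ can support several distinct cyclotomic factors, each giving its own prime), but since you tally $2^{\dcount(V,d)}-1$ separately for each odd $d$, the final count is unaffected.
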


The main tools we rely on are the Laurent phenomenon for cluster algebras and Nagata's Theorem, describing the behavior of class groups of Krull domains under localization.
In the case of an acyclic seed, we use a presentation of the cluster algebra due to Berenstein--Fomin--Zelevinsky, from \cite{BFZ}, to determine the height-$1$ prime ideals containing one of the initial cluster variables.

The paper is organized as follows.
In Section~\ref{sec:prelim} we recall basic definitions and results on cluster algebras arising from skew-symmetrizable matrices, on the multiplicative ideal theory of Krull domains, as well as some notions from factorization theory.
In Section~\ref{Section:exch-partners} we study the factorization properties of exchange polynomials and introduce the key notion of \emph{partner sets}.
Section~\ref{sec:class-groups} contains our main result on cluster algebras that are Krull domains (Theorem~\ref{Thm:MainGeneral}).
It also contains Subsection \ref{subsection F-poly}, where we show that cluster variables in factorial cluster algebras give rise to irreducible $F$-polynomials.

In Sections~\ref{sec:acyclic-prime} and \ref{sec:class-group-acyclic} we specialize to the case of acyclic seeds.
In Section~\ref{sec:acyclic-prime} we determine the height-$1$ prime ideals containing one of the initial cluster variables.
Using this, in Section~\ref{sec:class-group-acyclic}, we determine the class group of a cluster algebra associated to an acyclic seed.
The main theorems here are Theorem~\ref{Thm:AcyclicFactorial}, Theorem~\ref{Thm:MainAcyclic}, and Corollary~\ref{coro:dynkin}; we also give several easier to apply corollaries and work out examples.

In Section~\ref{sec:markov} we show that the cluster algebra associated to the Markov quiver is not a Krull domain.
Section~\ref{sec:noninvertible} contains partial generalizations of the main theorems to non-invertible frozen variables.
The final Section~\ref{sec:further} contains some further questions to investigate.

\section{Preliminaries}
\label{sec:prelim}
Throughout this article, let $K$ be a field of characteristic zero, or $K= \mathbb{Z}$.
From the end of Section~\ref{sect 1} on throughout the remainder of the paper,  if the base ring $K$ is a field, we assume that the underlying ice quiver $\Gamma(B)$ of the exchange matrix $B$ of our cluster algebra has no isolated exchangeable vertices.
This simplifies the statements of many results, without restricting their generality.
See Remark~\ref{Rem:FreezeTrivial} below.

A \emph{domain} is a nonzero unital commutative ring in which $0$ is the only zero-divisor.
For a domain $A$ we denote by $A^\times$ its group of units, by $A^\bullet=A\setminus \{0\}$ its monoid of nonzero elements, and by $\mathbf q(A)$ its quotient field.

\subsection{Cluster algebras}\label{sect 1}

In this subsection we recall basic notions about cluster algebras of geometric type, as introduced in the articles by Fomin--Zelevinsky \cite{FZ1,FZ2,FZ4} and by Berenstein--Fomin--Zelevinsky \cite{BFZ}. 

A good way to organize information about a cluster algebra is the notion of an ice quiver. Through all this work, the quivers considered will be \emph{$2$-acyclic}. This means that there are no oriented cycles of length one or two.

\begin{defi}[Ice quivers]
  \mbox{}

  \begin{enumerate}
    \item A \emph{quiver} $Q=(Q_0,Q_1)$ is a finite directed graph where $Q_0$ is the set of vertices and $Q_1$ the set of arrows. There are maps $s$,~$t \colon Q_1 \ra Q_0$ that indicate the source and the target of each arrow, respectively.
    \item An \emph{ice quiver} is a quiver $Q=(Q_0,Q_1)$ together with a partition of the vertex set $Q_0$ into \textit{exchangeable} and \textit{frozen} vertices.
      We also assume that there are no arrows between two frozen vertices.
  \end{enumerate}
\end{defi}

The following notions will become helpful.

\begin{defi}[Attributes of ice quivers]
  Let $Q=(Q_0,Q_1)$ be an ice quiver.
  \begin{enumerate}
    \item The \emph{exchangeable part} of $Q$ is the full subquiver on the set of exchangeable vertices.
    \item We say that $Q$ is \emph{acyclic} if the full subquiver on its exchangeable vertices does not contain any oriented cycle. 
    \item Two arrows $\alpha \neq \beta$ in $ Q_1$ are called \emph{parallel} if $s(\alpha)=s(\beta)$ and $t(\alpha)=t(\beta)$. 
    \item Let $i\in Q_0$. Elements in the set $N_{-}(i)=\{\,j\in Q_0 \ \vert \ \exists \alpha\colon j\to i \textrm{ in }Q_1\,\}$ are called \emph{predecessors}, elements in the set $N_{+}(i)=\{\,j\in Q_0\ \vert\  \exists \alpha\colon i\to j \textrm{ in }Q_1\,\}$ are called \emph{successors}, and elements in the set $N(i)=N_{-}(i)\cup N_{+}(i)$ are called \emph{neighbors}.
    \item A vertex $i\in Q_0$ is called a \emph{sink} if $N_{+}(i)=\emptyset$ and it is called a \emph{source} if $N_{-}(i)=\emptyset$. It is called \emph{isolated} if $N(i)=\emptyset$.
  \end{enumerate}
\end{defi}

Let $Q=(Q_0,Q_1)$ be an ice quiver with exchangeable vertices $[1,n]$ and frozen vertices $[n+1,n+m]$. With $Q$ we associate an integer $(n+m)\times n$ matrix $B=B(Q)=(b_{ij})$ with entries 
\begin{align*}
  b_{ij}=\lvert\{\,\alpha\in Q_1\ \vert \ s(\alpha)=i\textrm{ and }t(\alpha)=j\,\}\rvert-\lvert\{\,\alpha\in Q_1 \ \vert \ s(\alpha)=j\textrm{ and }t(\alpha)=i\,\}\rvert\in\mathbb{Z}.
\end{align*}
The $n\times n$ submatrix of an $(n+m)\times n$ matrix $B$ supported on rows $[1,n]$ is called the \emph{principal part} of $B$.
Note that the principal part of $B(Q)$ is skew-symmetric.

More generally, an $n\times n$ integer matrix $B=(b_{ij})$ is called \emph{skew-symmetrizable} if there exist positive integers $d_1$, $\ldots\,$,~$d_n$ such that $d_i b_{ij} = -d_j b_{ji}$ for all $i$,~$j \in [1,n]$.
We call an $(n+m)\times n$ integer matrix with $n$,~$m\in \mathbb{Z}_{\geq 0}$ and a skew-symmetrizable principal part an \emph{exchange matrix}.
Note that for every exchange matrix $B$ with skew-symmetric principal part there is exactly one ice quiver $Q$ with $B(Q)=B$ with exchangeable vertices $[1,n]$ and frozen vertices $[n+1,m]$.

\begin{rema}
  The notion of exchange matrices with a skew-symmetrizable principal part is more general than that of ice quivers, however it can be described using \emph{weighted directed graphs} (see \cite{FZ2}). We will not use this terminology.
 In this paper we will consider cluster algebras defined in terms of exchange matrices with a skew-symmetrizable principal part.
  However, many of our examples have skew-symmetric exchange matrices and can therefore be represented more nicely using ice quivers.
\end{rema}

Note that any skew-symmetrizable matrix is sign-skew-symmetric, that is, for any $i$, $j \in [1,n]$ either $b_{ij}=b_{ji}=0$ or $b_{ij}b_{ji} < 0$.

We associate to an exchange matrix $B$ the  ice quiver $\Gamma(B)$ in the following way. If $b_{ij} >0 $ we have $b_{ij}$ arrows from $i$ to $j$. If $i \in [n+1,m]$ is frozen and $b_{ij} <0$ we also add $-b_{ij}$ arrows from $j$ to $i$. Notice that in the case of matrices with skew-symmetric principal part, the exchange matrix $B$ and the ice quiver $\Gamma(B)$ carry the same information and one can be easily recovered from the other. That is, we can write $B(\Gamma(B))=B$.

We apply the notions of \emph{predecessors}, \emph{successors}, \emph{neighbors}, \emph{sources}, and \emph{sinks} to indices $i \in [1,n+m]$ by interpreting them in terms of $\Gamma(B)$.

Exchange matrices are an important ingredient in the definition of seeds.

\begin{defi}[Seeds]
  A \emph{seed} is a triple $\Sigma = (\x,\y,B)$ such that:
  \begin{enumerate}
    \item $\x=(x_1,\ldots,x_n)$ and $\y=(x_{n+1}, \ldots, x_{n+m})$, with $n$,~$m \ge 0$, are tuples consisting of altogether $n+m$ algebraically independent indeterminates over $K$;
      the set $\{ x_1,\ldots, x_{n+m} \}$ is called the \emph{cluster} of $\Sigma$.
    \item The elements in $\x$ are called \emph{exchangeable variables}; the elements in $\y$ are called \emph{frozen variables}.
    \item $B$ is an exchange matrix with $n+m$ rows and $n$ columns.
  \end{enumerate}
  Given a seed $\Sigma$, the field $\mathcal F = \mathcal F(\Sigma) = \mathbf{q}(K)(x_1,\ldots, x_{n+m})$ is called the associated \emph{ambient field}.
\end{defi}

We tacitly identify two seeds that arise from each other by a permutation of the exchangeable and frozen variables and a matching permutation of the rows and columns of $B$.

\begin{defi}[Acyclicity]
  A seed $\Sigma=(\x,\y,B)$  \textup{(}respectively, its exchange matrix $B$\textup{)} is called \emph{acyclic} if the ice quiver $\Gamma(B)$ is acyclic; that is, the full subquiver of $\Gamma(B)$ on the exchangeable vertices $[1,n]$ does not contain any oriented cycles.
\end{defi}

Note that the ice quiver of an acyclic seed may still have cycles involving frozen vertices, as is the case for the quiver in Example~\ref{exam:bigquiver}.

\begin{defi}[Principal coefficients] 
\label{def_principal_coeffs}
Let $\Sigma=(\x,\y,B)$ be a seed. We say that $\Sigma$ has \emph{principal coefficients} if $n=m$ and the $n\times n$ submatrix of $B$ formed by the last $n$ rows is the identity.
\end{defi}

We will consider the following polynomials.

\begin{defi}[Exchange polynomials] 
  Let $\Sigma=(\x,\y,B)$ be a seed with $\x=(x_1,\ldots,x_n)$ and $\y=(x_{n+1},\ldots,x_{n+m})$.
  Suppose that $i\in [1,n]$ is an exchangeable index. The polynomial 
  \[
    f_i=\prod_{\substack{j \in [1,n+m] \\ b_{ji}>0}} x_j^{b_{ji}} + \prod_{\substack{j \in [1,n+m] \\ b_{ji}<0}} x_j^{-b_{ji}}\in K[\x,\y]
  \]
  is called the \emph{exchange polynomial} of $x_i$ (with respect to the seed $\Sigma$).
\end{defi}

A crucial notion is the mutation of a seed.

\begin{defi}[Mutations of seeds]
  Let $\Sigma = (\x, \y, B)$ be a seed with $\x=(x_1,\ldots,x_n)$, with $\y=(x_{n+1},\ldots,x_{n+m})$, and with ambient field $\mathcal{F}$.
  For every $i \in[1,n]$, we define the \emph{mutation} of $\Sigma$ in the direction $i$ to be the seed $\mu_i(\Sigma) = (\x',\y',B')$ given by
  \begin{enumerate}
    \item $\x' = (x_1,\ldots,x_{i-1}, x_i', x_{i+1},\ldots, x_n)$ where $x_i'=\frac{f_i}{x_i}\in\mathcal{F}(\Sigma)$;
    \item $\y'=\y$;
    \item $B'=(b'_{kl}) \in \operatorname{Mat}_{(n+m)\times n}(\Z)$ with
      \[
        b'_{kl} = \begin{cases}
          - b_{kl} & \textrm{ if } k=i \textrm{ or } l=i; \\
          b_{kl} + \frac 12 (\lvert b_{ki}\rvert b_{il} + b_{ki}\lvert b_{il}\rvert) & \textrm{ otherwise.}
        \end{cases}
      \]
  \end{enumerate}
\end{defi}

It is easy to see that the mutation of a seed $\mu_i(\Sigma)$ is again a seed with the same number of exchangeable and frozen variables and the same ambient field.
Moreover, we have $(\mu_i\circ \mu_i)(\Sigma) =\Sigma$ for all seeds $\Sigma$ and all exchangeable indices $i$.
Thus, mutation induces the following equivalence relation.

\begin{defi}[Mutation equivalence]
  Given a seed $\Sigma=(\x,\y,B)$, its \emph{mutation class} is the set $\Mut(\Sigma)$ of all seeds which can be obtained from $\Sigma$ by applying successive mutations:
  \[
    \Mut(\Sigma) = \{\, \mu_{i_k} \circ \cdots \circ \mu_{i_2}\circ\mu_{i_1}(\Sigma) \ \vert \ k\geq 0 \text{ and } i_r\in[1,n]\text{ for all }r \,\}.
  \]
  Two seeds in the same mutation class are \emph{mutation-equivalent}.
\end{defi}

Denote by $\mathcal{X}$ the set of all exchangeable variables appearing in a seed in $\Mut(\Sigma)$. Now we are ready to state the definition of a cluster algebra.

\begin{defi}[Cluster algebras] \label{def:clusteralgebra}
  Let $\Sigma=(\x ,\y ,B)$ be a seed.
  The \emph{cluster algebra} associated to $\Sigma$ is the $K$-algebra
  \[
    A = A(\Sigma) = K[x, y \ \vert \ x \in \mathcal{X} ,\,  y ,  y^{-1} \in \y]\subseteq \mathcal{F}(\Sigma).
  \]
  The elements $x\in \mathcal{X}$ are called the \emph{cluster variables} of $A(\Sigma)$; the cluster variables in the initial seed $\Sigma$ are called \emph{initial cluster variables}; the elements $y\in\y$ are called the \emph{frozen variables} of $A(\Sigma)$. 
\end{defi}
Note that some authors do not invert the frozen variables in the definition of a cluster algebra. 
A cluster algebra is \emph{acyclic} if it has an acyclic seed $\Sigma$; however this does not imply that every seed of $A$ is acyclic.

There are several theorems in the literature that are helpful for us. Fomin--Zelevinsky's Laurent phenomenon \cite{FZ1} asserts that $A(\x,\y,B)\subseteq K[u^{\pm 1} \ \vert \ u\in\x \cup \y]$; the finite type classification \cite{FZ2} describes cluster algebras with only finitely many cluster variables by finite type root systems. Even if a cluster algebra admits infinitely many cluster variables it may be finitely generated as a $K$-algebra. More precisely, if the initial exchange matrix $B$ is acyclic, then the cluster algebra admits a nice presentation by finitely many generators and relations due to Berenstein--Fomin--Zelevinsky \cite{BFZ}.
\begin{theo}[Berenstein--Fomin--Zelevinsky] \label{Thm:BFZ}
  Let $\Sigma=(\x,\y,B)$ be a seed whose exchange matrix $B \in\operatorname{Mat}_{m+n,n}(\mathbb{Z})$ is acyclic. Let $\x = (x_1,\ldots,x_n)$, let $\y = (x_{n+1},\ldots,x_{n+m})$, and let $f_i$ with $i \in [1,n]$ be the exchange polynomials.
  Let $X_i$, $X_i'$ for $i \in [1,n]$, and $X_j$ for $j \in [n+1,n+m]$ denote algebraically independent indeterminates over $K$.
  Then the assignments
  \begin{align*}
    X_i\mapsto x_i \quad (i\in[1,n]), &&X_i'\mapsto \mu_{i}(\Sigma)_i \quad (i\in [1,n]),&&X_j \mapsto x_{j}\quad (j\in [n+1,n+m])
  \end{align*}
  induce an isomorphism of algebras
  \[
    K\big[ X_i,X_i',X_j^{\pm 1} \ \big\vert \  i\in[1,n],\, j\in[n+1,n+m] \big]/\langle X_i X_i' - f_i(X_1,\ldots, X_{n+m}) \ \vert \ i\in [1,n] \rangle \ \to\ A(\Sigma).
  \]
\end{theo}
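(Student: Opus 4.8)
The plan is to exhibit the displayed map and its inverse explicitly and check they compose to the identity. First I would define the ring
\[
  R = K\big[ X_i,X_i',X_j^{\pm 1} \ \big\vert \  i\in[1,n],\, j\in[n+1,n+m] \big]/\langle X_i X_i' - f_i \ \vert \ i\in [1,n] \rangle
\]
and observe that the assignment $\Phi\colon X_i \mapsto x_i$, $X_i' \mapsto \mu_i(\Sigma)_i = f_i/x_i$, $X_j \mapsto x_j$ is well defined: the frozen $x_j$ are invertible in $A(\Sigma)$ by definition, and the relations $X_iX_i' - f_i(X_1,\ldots,X_{n+m})$ are killed because $x_i \cdot (f_i/x_i) = f_i(x_1,\ldots,x_{n+m})$ holds in $\mathcal F(\Sigma)$. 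Surjectivity of $\Phi$ is the content of the acyclic case of the Laurent phenomenon together with the fact that, for an acyclic seed, every cluster variable lies in the subalgebra generated by the initial cluster variables, the $\mu_i(\Sigma)_i$, and the inverted frozen variables; this is exactly the statement that the cluster algebra is finitely generated in the acyclic case, which I would cite from \cite{BFZ} (or reprove via the standard ``only one mutation away'' argument showing $A(\Sigma) = K[x_i, f_i/x_i, x_j^{\pm1}]$).

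The main work is injectivity. Here I would use a filtration / grading argument. Assign to $X_i$ and $X_i'$ weights making $X_iX_i' - f_i$ (quasi-)homogeneous — concretely, use the $\Z^{n+m}$-grading on $\mathcal F(\Sigma)$ coming from the Laurent monomials in $x_1,\ldots,x_{n+m}$, under which $x_i$ has degree $e_i$ and $f_i/x_i$ has degree $\deg(f_i) - e_i$. Since $B$ is acyclic, one can choose an acyclic ordering of $[1,n]$; relative to this ordering the relations $X_iX_i' = f_i$ let one rewrite any monomial so that for each $i$ at most one of $X_i, X_i'$ appears, because $f_i$ involves only variables $x_j$ with $j \neq i$ and, for exchangeable $j$, only those adjacent to $i$ in $\Gamma(B)$. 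This gives a spanning set of ``reduced'' monomials for $R$; I would then show this spanning set maps under $\Phi$ to $K$-linearly independent elements of $\mathcal F(\Sigma)$ by tracking leading terms in the $\Z^{n+m}$-grading (or, equivalently, by passing to the associated graded ring, which is a localization of a polynomial ring). Acyclicity is what guarantees the rewriting terminates and that the degrees separate the reduced monomials.

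The hard part will be the bookkeeping in the injectivity step: verifying that the reduced monomials really do span $R$ (i.e.\ that the rewriting using $X_iX_i' = f_i$ is confluent, which is where one needs the acyclic ordering and the structure of the exchange polynomials), and then that $\Phi$ sends them to linearly independent Laurent polynomials. An alternative, cleaner route — which I would mention as a fallback — is simply to invoke \cite[Corollary~1.21 and the surrounding results]{BFZ}, where exactly this presentation is established for acyclic exchange matrices; in that case the ``proof'' is a citation plus the observation that our hypotheses (invertible frozen variables, base ring $K$ a field of characteristic zero or $K=\Z$) are covered by their setup, after noting that inverting the frozen variables on both sides is harmless.
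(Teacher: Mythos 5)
The paper does not prove this theorem at all: its ``proof'' is a citation of \cite[Corollary 1.21]{BFZ}, derived there from \cite[Corollary 1.17]{BFZ} (the presentation of the lower bound for an acyclic seed) and \cite[Theorem 1.20]{BFZ} (lower bound $=$ cluster algebra in the acyclic case), plus the remark that the result over $\Z$ extends to a field $K$ of characteristic $0$ by flat base change. Your fallback route is therefore exactly the paper's argument, and it is the right thing to do here; you should, however, make the base-change step explicit rather than asserting that the hypotheses ``are covered by their setup'' --- Berenstein--Fomin--Zelevinsky work over $\Z$ (more precisely over $\Z P$), so for $K$ a field of characteristic $0$ one needs the flatness of $K$ over $\Z$ to transport the isomorphism.

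Your primary route amounts to re-proving the BFZ results, and as sketched it has two genuine gaps. First, surjectivity is \emph{not} a ``standard only one mutation away argument'': the statement that $A(\Sigma)$ is generated by $x_i$, $\mu_i(\Sigma)_i$, and $x_j^{\pm1}$ is precisely \cite[Theorem 1.20]{BFZ}, a hard theorem whose proof goes through upper bounds; it fails for non-acyclic seeds (the Markov cluster algebra is not even finitely generated), so no soft argument can give it. Citing it is fine, but then you are already leaning on the same source as the fallback. Second, the injectivity step is \cite[Theorem 1.16]{BFZ} (linear independence of standard monomials, which holds \emph{if and only if} the seed is acyclic), and your leading-term argument does not work as stated: $\mu_i(\Sigma)_i = f_i/x_i$ is not homogeneous for the $\Z^{n+m}$-grading, since $f_i$ is a sum of two monomials of different multidegrees, so one needs a genuine term order adapted to an acyclic ordering of $[1,n]$, and verifying that the leading terms of the reduced monomials are distinct is exactly where the real work (and the use of acyclicity) lies. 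As written, the confluence of the rewriting and the separation of leading terms are asserted rather than proved. In short: your fallback is the paper's proof and is correct; your primary route is a plausible outline of the BFZ argument but would need substantial additional work at both the surjectivity and injectivity steps to stand on its own.
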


See \cite[Corollary 1.21]{BFZ} for the previous result.
It follows from \cite[Corollary 1.17]{BFZ}, by which the lower bound of a cluster algebra with acyclic seed has a presentation as given above, and \cite[Theorem 1.20]{BFZ}, by which the lower bound of a cluster algebra with acyclic seed coincides with the cluster algebra itself.
We remark that the proof in \cite{BFZ} concerns the case $K=\Z$, but the isomorphism holds for $K$ a field of characteristic $0$ through base extension, as every field of characteristic $0$ is flat as $\Z$-algebra.

In particular, we note that the cluster algebra $A(\Sigma)$ is finitely generated and noetherian when $\Sigma$ is acyclic.
It is also known that $A(\Sigma)$ is integrally closed in this case, since it coincides with its upper cluster algebra (see \cite{M2}), which is integrally closed as an intersection of Laurent polynomial rings.

The presentation in Theorem~\ref{Thm:BFZ} also implies that, for an acyclic seed, the algebra obtained by inverting a subset of the initial cluster variables is isomorphic to the cluster algebra obtained by freezing the corresponding variables.
This will be a crucial ingredient for our inductive arguments in Section~\ref{sec:acyclic-prime}.

\begin{rema}
  Presentations of lower bound cluster algebras have been studied by Muller--Rajchgot--Zykoski in \cite{MRZ}.
  Locally acyclic cluster algebras, introduced by Muller in \cite{M}, are also finitely generated, noetherian, and integrally closed.
  These algebras are known to coincide with their upper bound algebras (see \cite{M2}), presentations of which have been studied by Matherne--Muller in \cite{MM}.
\end{rema}

\begin{rema} \label{Rem:FreezeTrivial}
  Let $\Sigma=(\x,\y,B)$ be a seed with $\x=(x_1,\ldots,x_n)$ and $\y = (x_{n+1}, \ldots, x_{n+m})$, and let $A=A(\Sigma)$.
  If $K$ is a field and $i \in [1,n]$ is isolated, then $x_ix_i'=2$ implies that $x_i$ is a unit in $A$.
  Thus, if we freeze $i$, we obtain an algebra isomorphic to the original one.
  Hence, if $K$ is a field,  we may without restriction assume that $[1,n]$ has no isolated exchangeable indices.
  This assumption ensures that our exchange polynomials are always non-units in the polynomial ring $K[\x,\y]$.
\end{rema}

From now on for the rest of the paper, unless otherwise stated, if $K$ is a field, we assume without restriction that a seed has no isolated exchangeable indices.
This will simplify the statement of many results, without restricting their generality in any way.

\subsection{Krull domains}\label{section commutative}

In this section, we summarize some basic properties of Krull domains, with a focus on their multiplicative ideal theory.
Our main references are   \cite{Fos,GHK}. While in \cite{GHK} the theory of \emph{Krull monoids} is developed, this encompasses Krull domains well.
(A domain $A$ is a Krull domain if and only if $A^\bullet$ is a Krull monoid.)

Let $A$ be a domain and $\mathbf{q}(A)$ its quotient field.

A \emph{fractional ideal} is an $A$-submodule $\fa \subseteq \mathbf{q}(A)$ such that there exists an $x \in \mathbf{q}(A)^\times$ with $x\fa \subseteq A$.
For a fractional ideal $\fa$ we define
\[
  \fa^{-1} = (A:\fa) = \{\, x \in \mathbf{q}(A) \ \vert\  x\fa \subseteq A \,\} \quad\text{and}\quad \fa_v = (\fa^{-1})^{-1}.
\]
A fractional ideal $\fa$ is \emph{divisorial} if $\fa = \fa_v$; it is \emph{invertible} if there exists a fractional ideal $\fb$ such that $\fa\fb=A$.
Not every fractional ideal is invertible, but if $\fa$ is invertible, then $\fa^{-1}$ as defined above is indeed the unique fractional ideal such that $\fa \fa^{-1} = A$.
In this case $\fa_v = \fa$, so that every invertible ideal is divisorial.
Every principal fractional ideal $xA$ with $x \in \mathbf{q}(A)^\times$ is invertible with inverse $x^{-1}A$.
In particular, principal fractional ideals are divisorial.

The divisorial closure satisfies that $(\fa_v)_v = \fa$, that $(xA)_v = xA$ for all $x \in \mathbf{q}(A)^\times$, and that $(\fa\fb)_v = (\fa_v \fb)_v = (\fa_v \fb_v)_v$ for all fractional ideals $\fa$,~$\fb$.
Moreover, if $\fa \subseteq \fb$, then $\fa_v \subseteq \fb_v$.

The domain $A$ is \emph{$v$-noetherian} (or a \emph{Mori domain}) if it satisfies the ascending chain condition on divisorial ideals.

An element $x \in \mathbf{q}(A)$ is \emph{almost integral} (over $A$) if there exists  $c \in \mathbf{q}(A)^\times$ such that $cx^n \in A$ for all $n \ge 0$.
We say that $A$ is \emph{completely integrally closed} if every almost integral element $x \in \mathbf{q}(A)$ belongs to $A$.
A noetherian domain is completely integrally closed if and only if it is integrally closed.

\begin{defi}[Krull domain]
  A \emph{Krull domain} is a domain $A$ that is $v$-noetherian and completely integrally closed.
\end{defi}

In particular, a noetherian domain is a Krull domain if and only if it is integrally closed.
Thus, (locally) acyclic cluster algebras are Krull domains.

Let $A$ be a Krull domain.
The following statements are equivalent for an ideal $\prim$ of $A$:
\begin{equivenumerate}
  \item $\prim$ is a height-$1$ prime ideal.
  \item $\prim$ is a nonzero divisorial prime ideal.
  \item $\prim$ is a maximal divisorial ideal (if $A$ is not equal to its quotient field).
\end{equivenumerate}

We write $\Xx (A)$ for the set of height-$1$ prime ideals and keep the following important property in mind.

\begin{lemm}
  If $A$ is a Krull domain and $a \in A^\bullet$, then the set $\{\, \prim \in \Xx (A) \ \vert\  a \in \prim \,\}$ is finite.
\end{lemm}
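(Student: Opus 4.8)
The plan is to reduce the statement to the well-known fact that in a Krull domain only finitely many height-$1$ primes contain a given nonzero element, applied to a principal divisor. Fix $a \in A^\bullet$. Since $A$ is a Krull domain, it is the intersection $A = \bigcap_{\prim \in \Xx(A)} A_\prim$ of its localizations at height-$1$ primes, and each $A_\prim$ is a discrete valuation ring with associated normalized valuation $\val_\prim \colon \mathbf q(A)^\times \to \Z$. The key property I would invoke (and, if a self-contained argument is wanted, prove) is that the divisor homomorphism $\mathbf q(A)^\times \to \bigoplus_{\prim \in \Xx(A)} \Z$, sending $a$ to $(\val_\prim(a))_\prim$, is well-defined, i.e.\ that $\val_\prim(a) = 0$ for all but finitely many $\prim$. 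Once this is known, the lemma follows immediately: if $a \in \prim$, then $a \in \prim A_\prim$, the maximal ideal of $A_\prim$, so $\val_\prim(a) \ge 1 > 0$; hence $\{\, \prim \in \Xx(A) \ \vert\ a \in \prim \,\}$ is contained in the finite set $\{\, \prim \ \vert\ \val_\prim(a) \ne 0 \,\}$.

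To establish that $\val_\prim(a) = 0$ for almost all $\prim$, I would argue via the $v$-noetherian property. Consider the divisorial ideal $(aA)_v = aA$ and, more generally, the chain of divisorial ideals obtained by "stripping off" prime divisors: if $a$ were contained in infinitely many height-$1$ primes $\prim_1, \prim_2, \ldots$, one builds a strictly ascending chain of divisorial ideals $aA \subsetneq (aA : \prim_1)_v \subsetneq (aA : \prim_1\prim_2)_v \subsetneq \cdots$ (each step is strict because passing to $A_{\prim_k}$ detects the change in valuation at $\prim_k$, using that the $\prim_i$ are pairwise incomparable height-$1$ primes), contradicting the ascending chain condition on divisorial ideals. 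Alternatively, and perhaps more cleanly for the write-up, I would simply cite the structure theory in \cite{Fos} or \cite{GHK}: a Krull domain admits a \emph{defining family} of essential discrete valuations of finite character, meaning every nonzero element is a non-unit for only finitely many of them, and for a Krull domain this family is exactly $\{\val_\prim \mid \prim \in \Xx(A)\}$.

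The only genuine subtlety — and what I would flag as the main point to get right — is the equivalence between "$\prim$ contains $a$" and "$\val_\prim(a) > 0$", together with the finite-character property of the valuations; both are standard consequences of $A_\prim$ being a DVR for each height-$1$ prime and of $A$ being a Krull domain, but they are exactly the facts that make the finiteness work. Since the excerpt already records (via the equivalences (a)--(c)) that height-$1$ primes are precisely the nonzero divisorial primes, and since $v$-noetherianity is built into the definition of a Krull domain, all the needed input is available. Thus the proof reduces to a citation of \cite[Ch.~I, \S 3]{Fos} (or the corresponding statement for Krull monoids in \cite{GHK}) for the finite character of the defining family of valuations, followed by the one-line deduction above.
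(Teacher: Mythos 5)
Your proposal is correct, and it matches the paper's treatment: the paper states this lemma without proof as a standard property of Krull domains, implicitly deferring to the same structure theory in \cite{Fos} and \cite{GHK} that you cite, and your reduction to the finite character of the essential valuations $\val_\prim$ (equivalently, to the fact that $aA$ is a divisorial product of finitely many height-$1$ primes) is exactly the standard route. The optional self-contained ACC argument you sketch is also sound, provided one checks that the ideals $(aA:\prim_1\cdots\prim_k)$ are integral so that the ascending chain condition on (integral) divisorial ideals applies, but since you ultimately defer to the cited references this is not a gap.
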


Let $\cI_v(A)$ denote the set of all divisorial ideals of $A$, let $\cF_v(A)$ denote the set of all divisorial fractional ideals, $\cI_v(A)^\bullet$ the set of nonzero divisorial ideals, and $\cF_v(A)^\times$ the set of nonzero divisorial fractional ideals.
On $\cF_v(A)$ we can define an associative operation by $\fa \cdot_v \fb := (\fa\fb)_v$.
The ideal $A$ is a neutral element for this operation, so that $\cF_v(A)$ is a monoid.
For fractional ideals $\fa_1$, $\ldots\,$, $\fa_k$ we define the notation
\[
  \vprod_{i=1}^k \fa_i = \Big(\prod_{i=1}^k \fa_i\Big)_v.
\]
By convention an empty product is equal to the trivial ideal $A$, and in particular $\fa^0 = (\fa^0)_v = A$.

\begin{theo}
  If $A$ is a Krull domain, then $(\cF_v(A)^\times, \cdot_v)$ is a free abelian group with basis $\Xx(A)$.
  Thus, every $\fa \in \cF_v(A)^\times$ has a representation as divisorial product
  \[
    \fa = \vprod_{\prim \in \Xx(A)} \prim^{n_\prim}.
  \]
  with uniquely determined $n_\prim \in \Z$, almost all of which are $0$.
  We have $\fa \in \cI_v(A)^\bullet$ if and only if $n_\prim \ge 0$ for all $\prim \in \Xx(A)$.
\end{theo}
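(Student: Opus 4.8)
The plan is to exhibit an explicit isomorphism $\mathrm{div}\colon (\cF_v(A)^\times,\cdot_v)\to\bigoplus_{\prim\in\Xx(A)}\Z$ sending each $\prim\in\Xx(A)$ to the corresponding standard basis vector, and then to read off the last two assertions. Throughout I would use the standard local structure of Krull domains recalled in \cite{Fos}: for each $\prim\in\Xx(A)$ the localization $A_\prim$ is a discrete valuation ring, with normalized valuation $v_\prim\colon\mathbf{q}(A)^\times\to\Z$; one has $A=\bigcap_{\prim\in\Xx(A)}A_\prim$; for each $a\in A^\bullet$ one has $v_\prim(a)=0$ for all but finitely many $\prim$ (a refinement of the finiteness lemma above); and for every nonzero fractional ideal $\fa$ one has $\fa_v=\bigcap_{\prim\in\Xx(A)}\fa A_\prim$, the intersection being formed inside $\mathbf{q}(A)$. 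The only genuinely substantial input is this last identity, recovering a divisorial ideal from its localizations at height-$1$ primes, which rests on the independence of the valuations $v_\prim$; in a self-contained account this would be the hard step, but here it is simply quoted from \cite{Fos} (or, in the monoid language, from \cite{GHK}).

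First I would define, for a nonzero fractional ideal $\fa$ and $\prim\in\Xx(A)$, the integer $v_\prim(\fa)$ by declaring $\fa A_\prim=(\prim A_\prim)^{v_\prim(\fa)}$ in the DVR $A_\prim$; equivalently $v_\prim(\fa)=\min\{\,v_\prim(x)\mid x\in\fa,\ x\ne 0\,\}$. Picking $c\in\mathbf{q}(A)^\times$ with $c\fa\subseteq A$ and a nonzero $a\in c\fa$, one has $v_\prim(c\fa)=0$ whenever $a\notin\prim$, so $v_\prim(c\fa)$, and hence $v_\prim(\fa)=v_\prim(c\fa)-v_\prim(c)$, vanishes outside a finite set of $\prim$. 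Thus $\mathrm{div}(\fa):=(v_\prim(\fa))_{\prim\in\Xx(A)}$ lands in $\bigoplus_{\prim}\Z$, and $\mathrm{div}$ is a well-defined map on the submonoid $\cF_v(A)^\times$ of $(\cF_v(A),\cdot_v)$.

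Next I would verify that $\mathrm{div}$ is an injective monoid homomorphism whose image contains a generating set of $\bigoplus_\prim\Z$. For any nonzero fractional ideal $\fc$ one has $\fc_v A_\prim=\fc A_\prim$: the inclusion $\fc\subseteq\fc_v$ gives one direction, and $\fc_v=\bigcap_\qrim\fc A_\qrim\subseteq\fc A_\prim$ together with $\fc A_\prim$ being an $A_\prim$-module gives the other. Since localization commutes with products of ideals, $(\fa\cdot_v\fb)A_\prim=(\fa\fb)_v A_\prim=(\fa A_\prim)(\fb A_\prim)$, and comparing exponents in the DVR $A_\prim$ yields $v_\prim(\fa\cdot_v\fb)=v_\prim(\fa)+v_\prim(\fb)$; so $\mathrm{div}$ is a homomorphism. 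If $\mathrm{div}(\fa)=\mathrm{div}(\fb)$, then $\fa A_\prim=\fb A_\prim$ for all $\prim$, hence $\fa=\fa_v=\bigcap_\prim\fa A_\prim=\bigcap_\prim\fb A_\prim=\fb_v=\fb$; so $\mathrm{div}$ is injective (in particular $\cF_v(A)^\times$, being isomorphic to a subgroup-candidate, will turn out to be a group). Finally, each $\prim\in\Xx(A)$ is divisorial by the equivalences recalled above, and for $\qrim\in\Xx(A)$ with $\qrim\ne\prim$ we have $\prim\not\subseteq\qrim$ (distinct height-$1$ primes are incomparable, as a strict inclusion would give a chain of primes of length two), so $\prim A_\qrim=A_\qrim$; whereas $\prim A_\prim$ is the maximal ideal of $A_\prim$. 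Hence $\mathrm{div}(\prim)=e_\prim$. Since $\{e_\prim\}_\prim$ generates $\bigoplus_\prim\Z$, the injective homomorphism $\mathrm{div}$ is onto, so it is an isomorphism of groups; transporting the basis $\{e_\prim\}_\prim$ back along $\mathrm{div}^{-1}$ shows that $(\cF_v(A)^\times,\cdot_v)$ is free abelian with basis $\Xx(A)$, and that every $\fa$ equals $\vprod_{\prim\in\Xx(A)}\prim^{n_\prim}$ with uniquely determined $n_\prim=v_\prim(\fa)$, almost all of which are $0$.

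For the remaining assertion, an element of $\cF_v(A)^\times$ lies in $\cI_v(A)^\bullet$ if and only if it is contained in $A$. Using $A=\bigcap_\prim A_\prim$, the condition $\fa\subseteq A$ is equivalent to $\fa\subseteq A_\prim$ for all $\prim$, i.e.\ $\fa A_\prim\subseteq A_\prim$, i.e.\ $v_\prim(\fa)\ge 0$, for all $\prim\in\Xx(A)$. This completes the proof, and also pins down where the work lies: once the localization identity $\fa_v=\bigcap_\prim\fa A_\prim$ is granted, everything else is routine bookkeeping in discrete valuation rings.
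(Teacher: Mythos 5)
Your argument is the standard localization proof of this result (essentially the proof of \cite[Corollary 3.14]{Fos}); the paper itself does not prove the theorem but only cites Fossum and Geroldinger--Halter-Koch, so as a route there is nothing to object to, and the computations with the valuations $v_\prim$ (finiteness of the support, $\fc_v A_\prim=\fc A_\prim$, additivity, injectivity via $\fa=\bigcap_\prim \fa A_\prim$, and the final criterion for $\fa\subseteq A$) are all correct. There is, however, one step that does not close as written: the surjectivity of $\mathrm{div}$. You argue that the image contains every $e_\prim$ and that these generate $\bigoplus_\prim\Z$, hence $\mathrm{div}$ is onto. But $\{e_\prim\}_\prim$ generates $\bigoplus_\prim\Z$ only as a \emph{group}, while a priori $\mathrm{div}$ is a homomorphism of \emph{monoids} and its image is only a submonoid; the submonoid generated by the $e_\prim$ is $\bigoplus_\prim\Z_{\ge 0}$. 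Your parenthetical ``will turn out to be a group'' is precisely the point at issue: to conclude surjectivity you must first know that $\cF_v(A)^\times$ is closed under $v$-inverses, or equivalently exhibit preimages of the $-e_\prim$. This is also where the \emph{completely integrally closed} half of the Krull hypothesis genuinely enters, and it is invisible in your write-up.

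The gap is easily repaired. For $\prim\in\Xx(A)$ the fractional ideal $\prim^{-1}=(A:\prim)$ is divisorial, hence lies in $\cF_v(A)^\times$, and $\mathrm{div}(\prim^{-1})=-e_\prim$: from $A\subseteq\prim^{-1}$ and $\prim^{-1}\prim\subseteq A$ one gets $-v_\qrim(\prim)\le v_\qrim(\prim^{-1})\le 0$ for all $\qrim$, so $v_\qrim(\prim^{-1})=0$ for $\qrim\ne\prim$ and $v_\prim(\prim^{-1})\in\{-1,0\}$. Moreover $\prim^{-1}\ne A$, since otherwise $\prim=\prim_v=(A:(A:\prim))=(A:A)=A$; so some $x\in\prim^{-1}\setminus A=\prim^{-1}\setminus\bigcap_\qrim A_\qrim$ has $v_\qrim(x)<0$ for some $\qrim$, which by the previous sentence forces $\qrim=\prim$ and hence $v_\prim(\prim^{-1})=-1$. (Alternatively, complete integral closedness is equivalent to every nonzero fractional ideal being $v$-invertible, which makes $\cF_v(A)^\times$ a group outright.) With the $-e_\prim$ in the image, your surjectivity conclusion and everything after it go through.
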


\begin{proof}
  This follows from \cite[Corollary 3.14]{Fos}.
  Alternatively, this is shown in the more general setting of Krull monoids in \cite[Theorem 2.3.11]{GHK}.
  See \cite[Chapter 2.10]{GHK} for the connection between the monoid and the domain case.
\end{proof}

For $\fa \in \cF_v(A)^\times$ we define the \emph{$\prim$-adic valuation of $\fa$} as $\val_\prim(\fa) = n_\prim$ with $n_\prim$ as in the previous theorem.
For $\fa$,~$\fb \in \cI_v(A)^\bullet$, we have $\fa \subseteq \fb$ if and only if $\val_\prim(\fa) \ge \val_\prim(\fb)$ for all $\prim \in \Xx(A)$.

Since every principal fractional ideal $xA$ (with $x \in \mathbf{q}(A)^\times$) is divisorial and $(xA)\cdot_v(yA) = xyA$, the nonzero principal fractional ideals $\cH(A) = \{\, xA \mid x \in \mathbf{q}(A)^\times \,\} \subseteq \cF_v(A)^\times$ form a subgroup.

\begin{defi}[Class group]
  Let $A$ be a Krull domain.
  The \emph{\textup{(}divisor\textup{)} class group} of $A$ is $\cC(A) = \cF_v(A)^\times / \cH(A)$.
  We use additive notation for $\cC(A)$.
  If $\fa \in \cF_v(A)^\times$, we denote by $[\fa]$ its class in $\cC(A)$.
\end{defi}

An ideal $\fa \in \cI_v(A)$ is principal if and only if $[\fa]=0$.
The class group $\cC(A)$ and the subset $G_0 = \{\, [\prim] \in \cC(A) \mid \prim \in \Xx(A) \,\}$ of classes containing (divisorial, nonzero) prime ideals play a crucial role in the study of the arithmetic of $A^\bullet$.

Denote by $\Spec(A)$ the set of all prime ideals of $A$.
For an overring $A \subseteq B$ and an ideal $\mathfrak a \subseteq A$, we write $\mathfrak a B = \langle \mathfrak a \rangle_B$ for the extension of $\mathfrak a$ to $B$. Recall the following correspondence.

\begin{prop}
  \label{p-locprime}
  Let $A$ be a domain and $S \subseteq A^\bullet$ a multiplicatively closed set.
  There is an inclusion-preserving bijection
  \begin{align*}
    \{\, \prim \in \Spec (A) \mid \prim \cap S = \emptyset \,\} &\raw \Spec (S^{-1}A),\\
    \prim &\mapsto S^{-1}\prim = \prim (S^{-1}A),\\
    \qrim \cap A &\mapsfrom \qrim.
  \end{align*}
\end{prop}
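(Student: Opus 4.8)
The plan is to exhibit the two maps explicitly and verify that they are mutually inverse and inclusion-preserving; this is the standard localization argument, simplified here by the assumption that $A$ is a domain, so that $S^{-1}A$ is literally the subring $\{\, a/s \mid a\in A,\ s\in S\,\}$ of $\mathbf q(A)$ and the contraction $\qrim \cap A$ makes sense directly, without reference to the localization map.

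First I would check that the two maps are well-defined. For $\prim \in \Spec(A)$ with $\prim \cap S = \emptyset$, the set $S^{-1}\prim = \{\, p/s \mid p \in \prim,\ s\in S\,\}$ is an ideal of $S^{-1}A$; it is prime because $S^{-1}A/S^{-1}\prim \cong \bar S^{-1}(A/\prim)$ is a localization of the domain $A/\prim$ at the image $\bar S$ of $S$ (which does not contain $0$, precisely because $\prim \cap S = \emptyset$), hence a domain; and it is proper for the same reason, since $1 = p/s$ in $S^{-1}A \subseteq \mathbf q(A)$ would force $s = p \in \prim \cap S$. Conversely, for $\qrim \in \Spec(S^{-1}A)$ the contraction $\qrim \cap A$ is prime, and it avoids $S$ because every element of $S$ is a unit in $S^{-1}A$ and so cannot lie in the proper ideal $\qrim$.

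Next I would verify that the two composites are the identity. Given $\prim$ with $\prim \cap S = \emptyset$, one inclusion $\prim \subseteq (S^{-1}\prim)\cap A$ is clear; for the other, if $a \in A$ satisfies $a = p/s$ with $p \in \prim$, $s\in S$, then $sa = p \in \prim$, and since $s \notin \prim$ and $\prim$ is prime we get $a \in \prim$. Given $\qrim \in \Spec(S^{-1}A)$, the inclusion $S^{-1}(\qrim \cap A) \subseteq \qrim$ is clear; conversely any $x \in \qrim$ can be written $x = a/s$ with $a \in A$, $s \in S$, and then $a = sx \in \qrim$ with $a\in A$, so $a \in \qrim \cap A$ and $x \in S^{-1}(\qrim \cap A)$. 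Both assignments manifestly preserve inclusions, which completes the proof.

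Since this is a textbook fact, there is no real obstacle; the only point deserving a moment's attention is that the hypothesis $\prim \cap S = \emptyset$ is exactly what guarantees that $S^{-1}\prim$ is a proper ideal, and, dually, that a prime ideal of the localization automatically contracts to a prime ideal disjoint from $S$.
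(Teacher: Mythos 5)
Your proof is correct and complete; the well-definedness, mutual inverseness, and inclusion-preservation are all verified by the standard arguments, with the domain hypothesis correctly exploited to work inside $\mathbf q(A)$. The paper itself gives no proof of this proposition --- it is stated as a recollection of a classical fact about localization --- so there is nothing to compare against, and your write-up is exactly the expected textbook argument.
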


Since the bijection is inclusion-preserving, it \emph{preserves the height} of any prime ideal $\fp$ of $A$ with $\fp \cap S = \emptyset$.
If $A$ is a Krull domain and $S \subseteq A^\bullet$ is a multiplicatively closed set, then the localization $S^{-1}A$ is a Krull domain as well.
If $\fa \in \cF_v(A)$, then $S^{-1}\fa \in \cF_v(S^{-1}A)$ and the following is a monoid homomorphism
\[
  j_S\colon (\cF_v(A), \cdot_v) \to (\cF_v(S^{-1}A),\cdot_{v}), \quad \mathfrak a \mapsto S^{-1}\mathfrak a .
\]
Moreover, $\Xx(S^{-1}A)$ is in bijection with $\{\, \prim \in \Xx(A) \mid \prim \cap S = \emptyset \,\}$.
If $\mathfrak a \in \cF_v(A)^\times$, then
\[
  S^{-1}\mathfrak a = \vprod_{\prim \in \Xx(A)} (S^{-1} \prim)^{\val_\prim(\mathfrak a)} = \vprod_{\substack{\prim \in \Xx(A)\\ \prim \cap S = \emptyset}} (S^{-1}\prim)^{\val_\prim(\mathfrak a)}.
\]
The divisorial product here is now taken in the ring $S^{-1}A$.
Thus, the factorization of $S^{-1}\mathfrak a$ arises from the one of $\mathfrak a$ by simply replacing every prime ideal $\prim$ having $\prim \cap S = \emptyset$  by $S^{-1}\prim$ and omitting all prime ideals that intersect $S$ non-trivially.

This idea will be useful for computing $\val_\prim(\mathfrak a)$ for divisorial ideals $\mathfrak a$ of cluster algebras: localizing by a cluster, we always get a Laurent polynomial ring, which is factorial, and easier to work in.
In this localization, we only lose the prime ideals containing a variable of the given cluster.
To figure out the exponents of these primes, we may localize by a different cluster.

The following theorem on the behavior of class groups under localization will be a key tool in this approach.

\begin{theo}[Nagata's Theorem, {\cite[\S7]{Fos}}] \label{thm:nagata}
  Let $A$ be a Krull domain and $S \subseteq A^\bullet$ a multiplicative set.
  Then $j_S$ induces an epimorphism $\overline{j_S} \colon \cC(A) \to \cC(S^{-1}A)$,
  and $\ker(\overline j_S)$ is generated by the classes of those $\prim \in \Xx(A)$ with $\prim \cap S \ne \emptyset$.
\end{theo}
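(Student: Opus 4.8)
The plan is to assemble the statement directly from the properties of the monoid homomorphism $j_S\colon(\cF_v(A),\cdot_v)\to(\cF_v(S^{-1}A),\cdot_v)$ recorded immediately above. \textbf{Step 1 (well-definedness and surjectivity).} First I would observe that localization does not change the quotient field, so $\mathbf{q}(S^{-1}A)=\mathbf{q}(A)$, and that $j_S$ sends a principal fractional ideal $xA$ (with $x\in\mathbf{q}(A)^\times$) to $x(S^{-1}A)$, which is again principal. Hence $j_S(\cH(A))\subseteq\cH(S^{-1}A)$, and restricting $j_S$ to $\cF_v(A)^\times$ (a group, being free abelian on $\Xx(A)$) and passing to quotients yields a group homomorphism $\overline{j_S}\colon\cC(A)\to\cC(S^{-1}A)$. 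Surjectivity is immediate from the description $\Xx(S^{-1}A)=\{\,S^{-1}\prim\mid\prim\in\Xx(A),\ \prim\cap S=\emptyset\,\}$: since $\cF_v(S^{-1}A)^\times$ is free abelian on $\Xx(S^{-1}A)$, every divisorial fractional ideal of $S^{-1}A$ is of the form $S^{-1}\fa$ for a suitable $\fa\in\cF_v(A)^\times$ (namely the divisorial product over the corresponding primes of $A$ missing $S$), so $j_S$, and hence $\overline{j_S}$, is onto.

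\textbf{Step 2 (the kernel contains the claimed generators).} If $\prim\in\Xx(A)$ meets $S$, choose $s\in\prim\cap S$; then $s$ is a unit of $S^{-1}A$ lying in the extended ideal $S^{-1}\prim$, so $S^{-1}\prim=S^{-1}A$, the neutral element of $\cF_v(S^{-1}A)^\times$. Thus $\overline{j_S}([\prim])=0$, and the subgroup $N$ of $\cC(A)$ generated by all such classes satisfies $N\subseteq\ker(\overline{j_S})$. \textbf{Step 3 (the kernel is no larger).} Conversely, take $\fa\in\cF_v(A)^\times$ with $[\fa]\in\ker(\overline{j_S})$, so $S^{-1}\fa=x(S^{-1}A)$ for some $x\in\mathbf{q}(A)^\times$. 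Replacing $\fa$ by $\fa\cdot_v(x^{-1}A)$, which lies in the same class of $\cC(A)$ and whose image under $j_S$ is $S^{-1}A$, I may assume $S^{-1}\fa=S^{-1}A$. Now the formula $S^{-1}\fa=\vprod_{\prim\cap S=\emptyset}(S^{-1}\prim)^{\val_\prim(\fa)}$ together with uniqueness of representations in the free abelian group $\cF_v(S^{-1}A)^\times$ on the basis $\Xx(S^{-1}A)$ forces $\val_\prim(\fa)=0$ for every $\prim\in\Xx(A)$ with $\prim\cap S=\emptyset$. Hence $\fa=\vprod_{\prim\cap S\ne\emptyset}\prim^{\val_\prim(\fa)}$ is a finite divisorial product over primes meeting $S$, so $[\fa]=\sum_{\prim\cap S\ne\emptyset}\val_\prim(\fa)\,[\prim]\in N$. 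This gives $\ker(\overline{j_S})=N$, completing the proof.

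I do not expect a genuine obstacle: the statement is classical and is attributed to \cite[\S7]{Fos}, and all the necessary input (the effect of $j_S$ on principal ideals, the bijection $\Xx(S^{-1}A)\leftrightarrow\{\prim\mid\prim\cap S=\emptyset\}$, and the explicit product formula for $S^{-1}\fa$) has already been established above, so the argument is essentially bookkeeping. The only points deserving care are the identification $\mathbf{q}(S^{-1}A)=\mathbf{q}(A)$ used to make sense of the reduction in Step 3, and invoking uniqueness of the divisorial factorization in the localized ring at exactly the right moment; alternatively one may simply cite \cite{Fos} and omit the argument entirely.
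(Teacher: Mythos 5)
Your argument is correct. Note that the paper does not actually prove this statement --- it is quoted from Fossum's book with the citation \cite[\S7]{Fos} and used as a black box --- so there is no in-paper proof to compare against; your derivation is the standard one, and every ingredient you use (the monoid homomorphism $j_S$, the bijection $\Xx(S^{-1}A)\leftrightarrow\{\prim\in\Xx(A)\mid\prim\cap S=\emptyset\}$, the product formula for $S^{-1}\fa$, and uniqueness of divisorial factorizations in the free abelian group $\cF_v(S^{-1}A)^\times$) is recorded in the paper immediately before the statement. The only points worth making explicit, which you already flag, are that $\mathbf q(S^{-1}A)=\mathbf q(A)$ so that principal fractional ideals match up under extension, and that $j_S$ carries $\cF_v(A)^\times$ into $\cF_v(S^{-1}A)^\times$ and $\cH(A)$ into $\cH(S^{-1}A)$, so the induced map on class groups is well defined; with those in place, Steps 2 and 3 correctly identify the kernel.
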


In particular, if $S$ is generated by prime elements, then $\overline j_S$ is an isomorphism.
From this, one can deduce the following corollary.

\begin{coro} \label{c-nagata}
  Let $A$ be a domain and $S \subseteq A^\bullet$ a multiplicative set generated by prime elements.
  Then $A$ is factorial if and only if $S^{-1}A$ is.
\end{coro}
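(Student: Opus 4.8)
The plan is to deduce the corollary from Nagata's Theorem (Theorem~\ref{thm:nagata}) together with the classical fact that a domain is factorial precisely when it is a Krull domain with trivial divisor class group. Once we know that $A$ and $S^{-1}A$ are both Krull domains, Nagata's Theorem will give that the epimorphism $\overline{j_S}\colon\cC(A)\to\cC(S^{-1}A)$ has kernel generated by the classes of those height-$1$ primes of $A$ meeting $S$; since $S$ is generated by prime elements, such a prime $\prim$ contains a prime element $p\in S$, so $pA\subseteq\prim$ with $pA$ already a height-$1$ prime, whence $\prim=pA$ and $[\prim]=0$. Thus $\overline{j_S}$ is an isomorphism, and $\cC(A)=0$ if and only if $\cC(S^{-1}A)=0$, which is exactly the asserted equivalence.

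Carrying this out: if $A$ is factorial it is Krull with $\cC(A)=0$, the localization $S^{-1}A$ is again a Krull domain, and $\cC(S^{-1}A)$ is a homomorphic image of $\cC(A)=0$ by Nagata, so $S^{-1}A$ is factorial. (This direction is also visible directly: up to a unit, every nonzero nonunit of $A$ is a product of prime elements of $A$, each such prime element either becomes a unit in $S^{-1}A$ or stays prime there, at least one factor survives because the element is still a nonunit of $S^{-1}A$, and every element of $S^{-1}A$ is a unit times an element of $A$; hence $S^{-1}A$ is factorial.) Conversely, suppose $S^{-1}A$ is factorial. Granting for the moment that $A$ is a Krull domain, $S^{-1}A$ is Krull with $\cC(S^{-1}A)=0$, and the argument of the previous paragraph (using that $S$ is prime-generated) shows $\overline{j_S}$ is an isomorphism, so $\cC(A)\cong\cC(S^{-1}A)=0$ and $A$ is factorial.

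The delicate point, and the step I expect to be the real obstacle, is establishing that $A$ is a Krull domain in the converse. Integral closedness is automatic: if $z\in\mathbf q(A)$ is integral over $A$ it is integral over $S^{-1}A$, hence lies in $S^{-1}A$ (a factorial ring is integrally closed), say $z=a/s$ with $s$ a product of prime generators of $S$; clearing denominators in an integral equation for $z$ gives $s\mid a^{N}$ for some $N\ge1$, so each prime factor of $s$ divides $a$, and peeling these off one at a time (each time replacing $s$ by a shorter product of primes still lying in $S$) yields $z\in A$. What is not automatic for an arbitrary domain is the $v$-noetherian condition: for example $A=K[q,\,x/q^{n}:n\ge0]\subseteq K(q,x)$ is integrally closed and has a prime element $q$ with $A[1/q]\cong K[q^{\pm1}][x]$ factorial, yet $A$ is not even atomic, since the chain $(x)\subsetneq(x/q)\subsetneq(x/q^{2})\subsetneq\cdots$ never stabilises. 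Hence the converse genuinely requires a mild additional hypothesis, such as $A$ being Noetherian — which holds in every application of the corollary in this paper, and under which $A$ is Krull so that the class-group computation above applies.
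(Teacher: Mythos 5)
Your reading of the intended argument is exactly right: the paper offers no proof beyond the remark preceding the corollary, and the deduction it has in mind is the one you describe — Nagata's Theorem~\ref{thm:nagata} together with the characterization of factorial domains as Krull domains with trivial class group, the kernel of $\overline{j_S}$ being trivial because any height-$1$ prime $\prim$ meeting $S$ contains a prime generator $p$ and hence equals $pA$. Your forward direction is correct (in both versions), and, more importantly, your objection to the converse is also correct: as stated for an arbitrary domain the corollary is \emph{false}, and $A=K[q,\,x/q^{n}:n\ge 0]$ is a valid counterexample ($A/qA\cong K$, so $q$ is prime; $A[q^{-1}]=K[q^{\pm1},x]$ is factorial; yet $x\in\bigcap_{n}q^{n}A$, so $A$ is not even atomic). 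The implicit proof silently assumes that $A$ is a Krull domain, and this does not descend from $S^{-1}A$ without a chain condition. So the gap you have found is in the paper's statement, not in your argument. One small imprecision: you write that integral closedness is automatic and only $v$-noetherianity can fail, but a Krull domain must be \emph{completely} integrally closed, and this also fails in your example ($q^{-1}$ is almost integral, witnessed by $xq^{-n}\in A$); the example thus violates both halves of the Krull definition.

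Two remarks on your proposed repair. First, \emph{noetherian} is stronger than needed and does not actually cover the paper's one application of the corollary: in Corollary~\ref{Cor:ClusterFactorial} it is applied to an arbitrary cluster algebra, and cluster algebras need not be noetherian (the Markov cluster algebra of Section~\ref{sec:markov} is not). The hypothesis that suffices, and that every cluster algebra satisfies, is atomicity (or ACCP): if $A$ is atomic, an atom $a$ is either divisible by some prime generator $p$ of $S$, in which case $a$ is an associate of $p$ and hence prime, or else $a$ stays a non-unit atom in $S^{-1}A$, is therefore prime there, and descends to a prime of $A$ by peeling off prime generators of $S$ exactly as in your integral-closure computation; thus every atom is prime and $A$ is factorial. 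That cluster algebras are atomic follows from the Laurent phenomenon together with Theorem~\ref{Thm:ClusterAtoms}: $A\subseteq K[\x^{\pm1},\y^{\pm1}]$ with $A^{\times}=A\cap K[\x^{\pm1},\y^{\pm1}]^{\times}$, so any factorization into non-units of $A$ is a factorization into non-units of a factorial ring, making $A$ a bounded factorization domain and in particular atomic. With this hypothesis added the converse closes and the paper's use of the corollary is unaffected.
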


\subsection{Factorizations}

We recall some basic notions from factorization theory.
Let $A$ be a domain.
A non-unit $u \in A^\bullet$ is an \emph{atom} if $u=ab$ with $a$,~$b \in A^\bullet$ implies $a \in A^\times$ or $b \in A^\times$.
A non-unit $p \in A^\bullet$ is a \emph{prime element} if $p \mid ab$ with $a$,~$b \in A^\bullet$ implies $p \mid a$ or $p \mid b$; equivalently, the principal ideal $pA$ is a prime ideal in $A$.
Every prime element is an atom, but the converse is false in general.

Two elements $a$,~$b \in A^\bullet$ are \emph{associates} if there exists a unit $\varepsilon \in A^\times$ such that $a=b \varepsilon$; equivalently $aA = bA$.
The domain $A$ is \emph{atomic} if every non-unit $a \in A^\bullet$ is a product of atoms; it is \emph{factorial}  if every non-unit $a \in A^\bullet$ is a product of prime elements.
In the latter case, the factorization of $a$ into prime elements is unique up to order and associativity of the factors.
Factorial domains are often also called \emph{unique factorization domains}, or \emph{UFDs} in short.

Every $v$-noetherian domain, and hence every Krull domain, is atomic.
The following theorem shows the fundamental connection between factorial and Krull domains.
\begin{theo}
  Let $A$ be a domain.
  The following statements are equivalent.
  \begin{equivenumerate}
    \item \label{fact:fact} $A$ is factorial.
    \item \label{fact:atomsprime} $A$ is atomic and every atom of $A$ is a prime element.
    \item \label{fact:krull} $A$ is a Krull domain and $\cC(A)$ is trivial.
  \end{equivenumerate}
\end{theo}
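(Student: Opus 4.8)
The plan is to prove \ref{fact:fact} $\Leftrightarrow$ \ref{fact:atomsprime}, then \ref{fact:krull} $\Rightarrow$ \ref{fact:atomsprime}, and finally \ref{fact:fact} $\Rightarrow$ \ref{fact:krull}; together these give all three equivalences. Only the last implication carries genuine content: the others follow formally once one uses that prime elements are atoms and that every $v$-noetherian---in particular, every Krull---domain is atomic.

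For \ref{fact:fact} $\Leftrightarrow$ \ref{fact:atomsprime}: a factorial domain is atomic since prime elements are atoms, and if $u \in A^\bullet$ is an atom, then any factorization $u = p_1 \cdots p_k$ into prime elements must have $k=1$ (the $p_i$ are non-units), so $u$ is prime; conversely, if $A$ is atomic and every atom is prime, then every non-unit, being a product of atoms, is a product of primes. For \ref{fact:krull} $\Rightarrow$ \ref{fact:atomsprime}: a Krull domain is atomic, so it suffices to show that every atom $u$ is prime. By the structure theorem, $uA = \vprod_{\prim \in \Xx(A)} \prim^{n_\prim}$ with all $n_\prim \ge 0$ and almost all zero, and some $n_\prim \ge 1$ because $u$ is a non-unit. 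Since $\cC(A)$ is trivial and a divisorial ideal is principal exactly when its class vanishes, each occurring $\prim$ is principal, say $\prim = p_\prim A$ with $p_\prim$ a prime element (a generator of a prime ideal is prime). As the $v$-product of principal fractional ideals is their ordinary product, $uA = \big(\prod_\prim p_\prim^{n_\prim}\big) A$, so $u$ is an associate of $\prod_\prim p_\prim^{n_\prim}$; as $u$ is an atom and the $p_\prim$ are non-units, exactly one $n_\prim$ equals $1$ and the rest vanish, so $u$ is prime.

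It remains to prove \ref{fact:fact} $\Rightarrow$ \ref{fact:krull}, i.e.\ that a factorial domain $A$ is a Krull domain with trivial class group. That $A$ is completely integrally closed: if $x = a/b \in \mathbf{q}(A)$ with $a$,~$b$ coprime is almost integral, say $cx^n \in A$ for all $n \ge 0$ with $c \in A^\bullet$, then $b^n \mid ca^n$, and coprimality forces $b^n \mid c$ for every $n$, which is impossible unless $b \in A^\times$; hence $x \in A$. That $A$ is $v$-noetherian: I would show every nonzero divisorial ideal $\fa$ is principal by setting, for each prime element $p$, $n_p$ equal to the least multiplicity of $p$ among the nonzero elements of $\fa$ (this minimum is attained, and $n_p = 0$ for all but finitely many $p$), and $d = \prod_p p^{n_p}$; a short valuation computation then gives $\fa \subseteq dA$ and $d\,\fa^{-1} \subseteq A$, so $\fa_v = dA$. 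Thus the divisorial ideals of $A$ are precisely the principal ideals, and the ascending chain condition on divisorial ideals reduces to the ascending chain condition on principal ideals, which holds in a UFD by a length-of-factorization argument; hence $A$ is a Krull domain. Finally, since $\cF_v(A)^\times$ is free abelian on $\Xx(A)$, the class group $\cC(A)$ is generated by the classes of the height-$1$ primes, each of which is principal by the above, so $\cC(A)$ is trivial. (This implication is also classical; see \cite[\S3,\,\S6]{Fos} or \cite[Chapter 2]{GHK}.)

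The only step with real content is the verification that a factorial domain is $v$-noetherian---concretely, that every divisorial ideal is principal. The care points are that the minima $n_p$ are attained, that $d$ is a well-defined element (which rests on the finite character of the set of prime elements dividing a fixed nonzero element of $\fa$), and the valuation bookkeeping establishing both inclusions $\fa_v \subseteq dA$ and $dA \subseteq \fa_v$. Everything else is a matter of unwinding the definitions and invoking the structure theorem for $\cF_v(A)^\times$.
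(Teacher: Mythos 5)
Your proposal is correct, but it is worth noting that the paper does not actually carry out any of these arguments: its proof of this theorem consists entirely of citations, deferring (a)$\Leftrightarrow$(b) to \cite[Theorem 1.1.10]{GHK} and (a)$\Leftrightarrow$(c) to \cite[Proposition 6.1]{Fos} (or \cite[Corollary 2.3.13]{GHK} in the monoid setting). What you have written is a self-contained proof of this classical result, and your cycle of implications --- (a)$\Leftrightarrow$(b) directly, then (c)$\Rightarrow$(b)$\Rightarrow$(a)$\Rightarrow$(c) --- is logically complete. The individual steps all check out: in (c)$\Rightarrow$(b) the passage from the divisorial factorization $uA = \vprod \prim^{n_\prim}$ to an honest product of prime elements uses exactly the facts recorded in Section 1.2 of the paper (principal ideals are divisorial, their $v$-product is the ordinary product, and a divisorial ideal is principal iff its class vanishes); and in (a)$\Rightarrow$(c) your identification of the divisorial closure $\fa_v$ with $dA$, where $d = \prod_p p^{n_p}$ and $n_p$ is the minimal $p$-adic valuation over $\fa^\bullet$, is the standard way to see that a UFD is a Krull domain, with the finite-character point (that $n_p = 0$ for almost all $p$, by comparison with a single fixed nonzero element of $\fa$) correctly flagged as the place where care is needed. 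The trade-off is the obvious one: the paper's citation keeps the exposition short for a result that is genuinely classical, while your version makes the logical dependencies explicit --- in particular it makes visible that only (a)$\Rightarrow$(c) requires any real work, and that the triviality of the class group in (c) is used only through the principality of height-$1$ primes.
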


\begin{proof}
  The equivalence \ref*{fact:fact}$\Leftrightarrow$\ref*{fact:atomsprime} is elementary, see \cite[Theorem 1.1.10]{GHK}.
 
  The equivalence \ref*{fact:fact}$\Leftrightarrow$\ref*{fact:krull} follows from \cite[Proposition 6.1]{Fos}.
  Alternatively, this is shown in the more general setting of (commutative, cancellative) monoids in \cite[Corollary 2.3.13]{GHK}.
  Again, see \cite[Chapter 2.10]{GHK} for the connection between monoids and domains.
\end{proof}

For cluster algebras, Geiss--Leclerc--Schr\"oer have shown the following.

\begin{theo}[\cite{GLS}] \label{Thm:ClusterAtoms}
  Let $\Sigma=(\x,\y,B)$ be a seed,  $\x = (x_1,\ldots, x_n)$, and $\y = (x_{n+1}, \ldots, x_{n+m})$.
  Let $A=A(\Sigma)$.
  \begin{equivenumerate}
    \item Any cluster variable is an atom.
    \item The group of units of $A$ is
      \[
        A^\times= K^\times \times \langle x_{j}^{\pm 1} \ \vert\ j \in [n+1,n+m] \rangle.
      \]
  \end{equivenumerate}
\end{theo}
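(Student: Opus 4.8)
The plan is to prove the two assertions of Theorem~\ref{Thm:ClusterAtoms} using the Laurent phenomenon together with the presentation of the cluster algebra in the acyclic case (Theorem~\ref{Thm:BFZ}); for the general case one bootstraps from the acyclic case by realizing that the relevant statements are local along chains of mutations. I would first settle (b), and then deduce (a) from it. For (b): the containment $K^\times \times \langle x_j^{\pm 1} \mid j \in [n+1,n+m]\rangle \subseteq A^\times$ is clear, since each $x_j$ with $j$ frozen is invertible by definition of $A(\Sigma)$ and $K^\times$ consists of units. For the reverse containment, suppose $u \in A^\times$. By the Laurent phenomenon, both $u$ and $u^{-1}$ lie in the Laurent polynomial ring $L = K[x_1^{\pm 1}, \ldots, x_{n+m}^{\pm 1}]$, whose units are exactly $K^\times$ times monomials $\prod_{i=1}^{n+m} x_i^{a_i}$ with $a_i \in \Z$. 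So $u = \lambda \prod_i x_i^{a_i}$ for some $\lambda \in K^\times$ and $a_i \in \Z$. The point is then to show that $a_i = 0$ for every exchangeable index $i \in [1,n]$. For this, fix an exchangeable index $i$ and localize (or mutate) so as to replace $x_i$ by $x_i' = f_i/x_i$: then $u$ also lies in the Laurent ring attached to the mutated cluster $\mu_i(\Sigma)$, in which $x_i$ is expressed via $x_i = f_i/x_i'$. Comparing the two Laurent expressions of $u$ and using that $f_i$ is not a monomial (it is a genuine binomial, and — using the standing assumption of no isolated exchangeable indices when $K$ is a field — a non-unit in $K[\x,\y]$), one forces the exponent $a_i$ to vanish. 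Iterating over all $i \in [1,n]$ gives $u = \lambda \prod_{j=n+1}^{n+m} x_j^{a_j} \in K^\times \times \langle x_j^{\pm1}\rangle$, proving (b).

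For part (a), let $x \in \mathcal X$ be a cluster variable; it appears in some seed $\Sigma' = (\x',\y,B')$ mutation-equivalent to $\Sigma$, and $A(\Sigma') = A(\Sigma) = A$. Thus it suffices to show that an initial cluster variable $x_1$ is an atom of $A$. Suppose $x_1 = ab$ with $a, b \in A^\bullet$. Localize at the multiplicative set generated by $x_2, \ldots, x_{n+m}$: since the frozen variables are already units, this localization is $A[x_2^{-1}, \ldots, x_n^{-1}]$, which by Theorem~\ref{Thm:BFZ} (using that inverting initial cluster variables corresponds to freezing them, so this remains a cluster algebra) is again a cluster algebra, and in fact after inverting all of $x_2,\ldots,x_n$ one lands in a ring where the Laurent expansions make the structure transparent. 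Working in the localization $L = K[x_1^{\pm1},\ldots,x_{n+m}^{\pm1}]$, which is factorial, $x_1$ is a prime element there (it is a variable), so one of $a, b$ — say $a$ — is, up to a unit of $L$, equal to $x_1$, and $b$ is a unit of $L$. The task is to upgrade "unit of $L$" to "unit of $A$": by the Laurent phenomenon $b \in L$, $b$ is a monomial times a scalar, and we must exclude the appearance of negative powers of exchangeable variables in $b$ — but $b \in A$, and a genuinely negative power of an exchangeable variable cannot occur in an element of $A$ that is a unit in $L$, again by the mutation argument of part (b). Hence $b \in A^\times$, so $x_1$ is an atom.

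The main obstacle is the passage from "unit in the Laurent ring $L$" to "unit in $A$" — equivalently, ruling out negative powers of exchangeable variables in a Laurent-monomial element of $A$. This is precisely where one must invoke mutation: an element of $A$ lies simultaneously in the Laurent rings of all clusters, and comparing the monomial expansions across the single mutation $\mu_i$ shows that the $x_i$-exponent of a monomial unit must be $0$, the key input being that the exchange polynomial $f_i$ is a non-monomial non-unit of $K[\x,\y]$ (here the no-isolated-index convention of Remark~\ref{Rem:FreezeTrivial} is what guarantees $f_i \notin K[\x,\y]^\times$). Everything else — the description of units of a Laurent polynomial ring, factoriality of $L$, the reduction of a general cluster variable to an initial one via $A(\Sigma') = A(\Sigma)$ — is routine. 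I would also remark that in the case $K = \Z$ the argument is unchanged: $\Z$ is a UFD, $L$ is still factorial, and the units of $L$ are $\{\pm1\}$ times monomials, so $K^\times = \{\pm 1\}$ enters in place of the general $K^\times$.
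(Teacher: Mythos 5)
The paper does not prove this statement at all: it is imported from Geiss--Leclerc--Schr\"oer \cite{GLS}, so there is no internal proof to compare against. Your strategy --- Laurent phenomenon plus a comparison of monomial units across a single mutation --- is essentially the argument of \cite{GLS}, and your part (b) is sound: the load-bearing fact, which you correctly isolate, is that $f_i$ is a non-monomial non-unit of the Laurent ring of the mutated cluster, so a nonzero exponent $a_i$ would force $f_i^{a_i}$ to be a unit there.

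Part (a), however, has two problems. First, in $L = K[x_1^{\pm1},\ldots,x_{n+m}^{\pm1}]$ the element $x_1$ is a \emph{unit}, not a prime element, so the sentence ``$x_1$ is a prime element there (it is a variable)'' is false; what is true (and is all you need) is that $a$ and $b$ are both units of $L$, hence each is a scalar times a Laurent monomial, with the two monomials multiplying to $x_1$. Second, and more substantively, excluding negative exchangeable exponents in $b$ alone does not yield $b \in A^\times$: the element $x_2$ is a Laurent monomial in $A$ with no negative exponents and is not a unit. You must run the mutation argument on \emph{both} factors, $a = \lambda\prod_i x_i^{\alpha_i}$ and $b = \mu\prod_i x_i^{\beta_i}$, to get $\alpha_i,\beta_i \ge 0$ for all exchangeable $i$; then $\alpha_i+\beta_i = 0$ for $i \in [2,n]$ forces $\alpha_i=\beta_i=0$, and $\alpha_1+\beta_1=1$ forces $\{\alpha_1,\beta_1\}=\{0,1\}$, so exactly one of $a$, $b$ is a scalar times a monomial in the frozen variables, i.e.\ a unit of $A$ by part (b). Finally, the appeals to Theorem~\ref{Thm:BFZ} and to ``bootstrapping from the acyclic case'' are both unavailable (the statement concerns arbitrary seeds, and that presentation requires acyclicity) and unnecessary: the Laurent phenomenon applied to the initial cluster and to its one-step mutations is all the argument uses.
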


Together with Nagata's Theorem, this implies a characterization of factorial cluster algebras, first observed by Lampe in \cite{L2}.

\begin{coro} \label{Cor:ClusterFactorial}
  Let $\Sigma=(\x,\y,B)$ be a seed with $\x = (x_1,\ldots, x_n)$ and $\y = (x_{n+1}, \ldots, x_{n+m})$, and let $A=A(\Sigma)$.
  Then the following statements are equivalent.
  \begin{equivenumerate}
    \item \label{Cor:ClusterFactorial:fact} $A$ is factorial,
    \item \label{Cor:ClusterFactorial:primeall} every cluster variable is a prime element,
    \item \label{Cor:ClusterFactorial:prime}every exchangeable variable $x_1$, $\ldots\,$,~$x_n$ of the seed $\Sigma$ is a prime element.
  \end{equivenumerate}
\end{coro}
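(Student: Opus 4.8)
The plan is to prove the chain of equivalences by closing a cycle $\ref*{Cor:ClusterFactorial:fact} \Rightarrow \ref*{Cor:ClusterFactorial:primeall} \Rightarrow \ref*{Cor:ClusterFactorial:prime} \Rightarrow \ref*{Cor:ClusterFactorial:fact}$. The implication $\ref*{Cor:ClusterFactorial:fact} \Rightarrow \ref*{Cor:ClusterFactorial:primeall}$ is essentially free: in a factorial domain, by the theorem characterizing UFDs every atom is a prime element, and by Theorem~\ref{Thm:ClusterAtoms}\,\ref*{fact:fact} (the Geiss--Leclerc--Schröer result, part (a)) every cluster variable is an atom; hence every cluster variable is prime. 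The implication $\ref*{Cor:ClusterFactorial:primeall} \Rightarrow \ref*{Cor:ClusterFactorial:prime}$ is trivial, since the exchangeable variables $x_1,\ldots,x_n$ of $\Sigma$ are among the cluster variables of $A$.

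\emph{The substantive step is} $\ref*{Cor:ClusterFactorial:prime} \Rightarrow \ref*{Cor:ClusterFactorial:fact}$. Here I would argue via Nagata's Theorem (Theorem~\ref{thm:nagata}) together with Corollary~\ref{c-nagata}. Assume every $x_i$ with $i \in [1,n]$ is a prime element of $A$. Set $S$ to be the multiplicative set generated by $x_1, \ldots, x_n$ together with all the (invertible) frozen variables $x_{n+1}, \ldots, x_{n+m}$; each of these generators is a prime element of $A$ (frozen variables are units, and units are vacuously "prime" in the sense needed, or one simply localizes only at the $x_i$ since inverting units changes nothing). Since $S$ is generated by prime elements, Corollary~\ref{c-nagata} gives that $A$ is factorial if and only if $S^{-1}A$ is factorial. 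But $S^{-1}A$ is obtained from $A$ by inverting all the variables of the initial cluster, which by the Laurent phenomenon (Fomin--Zelevinsky, $A(\x,\y,B) \subseteq K[u^{\pm 1} \mid u \in \x \cup \y]$) is exactly the Laurent polynomial ring $K[x_1^{\pm 1}, \ldots, x_{n+m}^{\pm 1}]$. A Laurent polynomial ring over a field (or over $\Z$) is factorial, being a localization of the polynomial ring $K[x_1,\ldots,x_{n+m}]$, which is factorial. Hence $S^{-1}A$ is factorial, and therefore so is $A$.

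\emph{The main obstacle} — really the only point requiring care — is the clean identification $S^{-1}A \cong K[x_1^{\pm 1}, \ldots, x_{n+m}^{\pm 1}]$: one must check that localizing the cluster algebra at the initial cluster variables and frozen variables yields precisely this Laurent ring, not merely a subring of it. The inclusion $S^{-1}A \subseteq K[u^{\pm 1} \mid u \in \x \cup \y]$ is immediate from the Laurent phenomenon, and the reverse inclusion holds because $A$ already contains $x_1, \ldots, x_{n+m}$ and $S^{-1}A$ additionally contains their inverses. One should also remember that under our standing convention (no isolated exchangeable indices when $K$ is a field) each $x_i$ is genuinely a non-unit in $A$, so the hypothesis "$x_i$ is prime" is not vacuous; and that $K = \Z$ is permitted, for which $\Z[x_1^{\pm 1}, \ldots, x_{n+m}^{\pm 1}]$ is still factorial. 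Assembling these pieces completes the cycle and hence the proof.
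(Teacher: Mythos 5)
Your proposal is correct and follows essentially the same route as the paper: the implication \ref{Cor:ClusterFactorial:fact}${}\Rightarrow{}$\ref{Cor:ClusterFactorial:primeall} via Theorem~\ref{Thm:ClusterAtoms} and the characterization of factorial domains, the trivial step \ref{Cor:ClusterFactorial:primeall}${}\Rightarrow{}$\ref{Cor:ClusterFactorial:prime}, and then \ref{Cor:ClusterFactorial:prime}${}\Rightarrow{}$\ref{Cor:ClusterFactorial:fact} by localizing at the initial cluster, invoking the Laurent phenomenon to identify the localization with a factorial Laurent polynomial ring, and applying Nagata's Theorem in the form of Corollary~\ref{c-nagata}. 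Your extra care in verifying that the localization is exactly the Laurent polynomial ring (not merely contained in it) is a sensible elaboration of a point the paper's proof passes over quickly.
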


\begin{proof}
  \ref*{Cor:ClusterFactorial:fact}${}\Rightarrow{}$\ref*{Cor:ClusterFactorial:primeall}:
  By Theorem~\ref{Thm:ClusterAtoms} every cluster variable is an atom.
  Since $A$ is factorial, every atom is a prime element.

  \ref*{Cor:ClusterFactorial:primeall}${}\Rightarrow{}$\ref*{Cor:ClusterFactorial:prime}: Clear.

  \ref*{Cor:ClusterFactorial:prime}${}\Rightarrow{}$\ref*{Cor:ClusterFactorial:fact}:
  By the Laurent phenomenon, the localization $A[x_1^{-1},\ldots,x_n^{-1}]$ is a Laurent polynomial ring over the factorial ring $K$, and hence itself factorial.
  The claim follows from Nagata's Theorem, in the form of Corollary~\ref{c-nagata}.
\end{proof}

\section{Exchange polynomials and partners}
\label{Section:exch-partners}

In this section we introduce the notion of partners, based on exchange polynomials having non-trivial common factors.
The concept of partners will play a central role in the computation of the class group of cluster algebras with an acyclic seed.

\subsection{Exchange polynomials}

Before defining partners, we take a closer look at the factorization of exchange polynomials.
For $d \ge 1$, we denote by $\mu_d (K)$ the group of $d$-th roots of unity in $K$, and by $\mu_d^*(K)$ the set of primitive $d$-th roots of unity.
Let $\overline \Q$ denote an algebraic closure of $\Q$.
We denote by
  \[
    \Phi_d(x,y) = \prod_{\zeta \in \mu^*_{d}(\overline \Q)} (x- \zeta y) \in \Z[x,y],
  \]
the homogenized $d$-th cyclotomic polynomial.
Recall that $\Phi_d(x,y)$, respectively the cyclotomic polynomial $\Phi_d(x,1)$, is irreducible over $\Z$ and hence also over $\Q$.
Over a given field $K$, the polynomial $\Phi_d(x,y)$ will either remain irreducible (if $\mu_d^*(K) = \emptyset$), or split into a product of $\varphi(d)$ linear factors (if $\mu_d^*(K) \ne \emptyset$); here $\varphi(d) = \card{(\Z/d\Z)^\times}$ denotes Euler's totient function.

A theorem of Ostrowski, \cite[Theorem IX]{O}, characterizes the absolute irreducibility of the exchange polynomials.
We need to make a few additional observations, and for this purpose restate the proof in full.

\begin{lemm} \label{lemma:poly}
  Let $D$ be a domain of characteristic $0$ and let $D[x_1,\ldots,x_k]$ be a polynomial ring.
  Let
  \[
    f = x_1^{u_1}\cdots x_k^{u_k} +  x_1^{v_1}\cdots x_k^{v_k} \in D[x_1, \ldots, x_k]  \setminus D \quad\text{with $u_iv_i = 0$ for all $i \in [1,k]$}.
  \]
 Set $d=\gcd(u_1,\ldots,u_k,v_1,\ldots,v_k)$, so that $f = g^d + h^d$ with monomials $g$,~$h \in D[x_1, \ldots, x_k]$ having disjoint support.
  Then the map
  \[
    D[x_1] \to D[x_1,\ldots,x_k],\ r \mapsto h^{\deg(r)} r(g/h)
  \]
  induces a bijection between factors of $x_1^d + 1$ and factors of $f$.
  In particular,
  \begin{enumerate}
    \item\label{lemma:poly:rep} all factors of $f \in D[x_1,\ldots,x_k]$ are contained in $D[g,h]$;
    \item\label{lemma:poly:bij} the factorization of $f$ into irreducible polynomials does not have any repeated factors;
    \item\label{lemma:poly:factalg} If $D=K$ is an algebraically closed field, then
      \[
        g^d + h^d = \frac{g^{2d} - h^{2d}}{g^d - h^d} = \prod_{\zeta \in \mu_{2d}(K) \setminus \mu_{d}(K)} (g - \zeta h),
      \]
      with $g - \zeta h$ irreducible in $K[x_1,\ldots, x_k]$.
    \item \label{lemma:poly:factZ} Suppose that $D=\Z$ or $D=\Q$. If $d=2^lc$ with $l \ge 0$ and $c$ odd, then
      \[
        g^d + h^d = \prod_{\substack{e \mid 2d \\ e \nmid d}} \Phi_e(g,h) = \prod_{e \mid c} \Phi_{2^{l+1} e}(g,h),
      \]
      with $\Phi_e(g,h)$ irreducible in  $D[x_1,\ldots, x_k]$.
  \end{enumerate}
\end{lemm}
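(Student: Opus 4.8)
The plan is to reduce everything to the single-variable polynomial $x_1^d+1$ via the substitution map, and then invoke classical facts about $x^n+1$.

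First I would set up the substitution map carefully. Write $f = g^d + h^d$ where $g$, $h$ are the monomials obtained by dividing the exponent vectors $(u_1,\dots,u_k)$ and $(v_1,\dots,v_k)$ by $d = \gcd(u_1,\dots,u_k,v_1,\dots,v_k)$; since $u_iv_i=0$, the monomials $g$ and $h$ have disjoint support. Because $g$ and $h$ have disjoint support and $D[x_1,\dots,x_k]$ is a UFD, the subalgebra $D[g,h]$ is a genuine polynomial ring in the two ``variables'' $g$, $h$, and moreover $g$, $h$ are coprime in $D[x_1,\dots,x_k]$. Given a polynomial $r \in D[x_1]$ of degree $e = \deg(r)$, the homogenization $h^{e} r(g/h)$ is the element of $D[g,h]$ obtained by replacing $x_1^i$ by $g^i h^{e-i}$; this is a ring-theoretic operation compatible with the identification of $D[g,h]$ with a bivariate polynomial ring followed by homogenization. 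I would verify that $r \mapsto h^{\deg r} r(g/h)$ is multiplicative up to the bookkeeping of degrees (homogenization of a product is the product of homogenizations since $g,h$ are algebraically independent), sends $x_1^d+1$ to $g^d+h^d = f$, and — using that any factor of $f$ must itself lie in $D[g,h]$ by part~\ref{lemma:poly:rep}, being a product of irreducibles each of which divides a monomial-free polynomial and hence, by coprimality of $g$ and $h$ with each $x_j$, cannot involve any variable outside $g,h$ — that it gives a bijection between (associate classes of) factors of $x_1^d+1$ in $D[x_1]$ and factors of $f$ in $D[x_1,\dots,x_k]$. This proves the displayed bijection statement and item~\ref{lemma:poly:rep}.

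Next I would deduce \ref{lemma:poly:bij}: the polynomial $x^d+1 \in D[x]$ is squarefree because $\gcd(x^d+1, dx^{d-1}) = 1$ in $\mathbf q(D)[x]$ (here characteristic $0$ is used so $d \ne 0$), hence $x^d+1$ has no repeated irreducible factor, and the bijection transports this to $f$. For \ref{lemma:poly:factalg}, over an algebraically closed field $K$ of characteristic $0$ one has $x^d+1 = \prod_{\zeta \in \mu_{2d}(K)\setminus \mu_d(K)}(x-\zeta)$, since the roots of $x^d+1$ are exactly the $2d$-th roots of unity that are not $d$-th roots of unity; applying the substitution (which sends the linear factor $x-\zeta$ to $h(g/h - \zeta) = g - \zeta h$, irreducible as a non-associate-to-a-monomial degree-one element in two algebraically independent generators) and noting $x^d+1 = (x^{2d}-1)/(x^d-1)$ gives the claimed identity. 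For \ref{lemma:poly:factZ}, with $D = \Z$ or $D=\Q$, I would use the standard cyclotomic factorization $x^{2d}-1 = \prod_{e\mid 2d}\Phi_e(x,1)$ and $x^d-1 = \prod_{e\mid d}\Phi_e(x,1)$, so that $x^d+1 = \prod_{e\mid 2d,\ e\nmid d}\Phi_e(x,1)$; writing $d = 2^l c$ with $c$ odd, the divisors $e$ of $2d$ not dividing $d$ are precisely those of the form $2^{l+1}e'$ with $e'\mid c$, giving the second product. Each $\Phi_e(x,1)$ is irreducible over $\Q$ (hence over $\Z$ by Gauss's lemma), and the substitution carries $\Phi_e(x,1)$ to its homogenization $\Phi_e(g,h)$, which remains irreducible in $D[x_1,\dots,x_k]$ because the bijection preserves irreducibility (a factorization of $\Phi_e(g,h)$ would pull back to a factorization of $\Phi_e(x,1)$).

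The main obstacle I anticipate is the careful justification that every factor of $f$ actually lies in the subalgebra $D[g,h]$ and that the substitution map is a well-defined bijection on factorizations rather than merely a map: one must argue that $g$ and $h$, having disjoint monomial support, behave like independent variables (so that $D[g,h]$ is a polynomial ring and homogenization/dehomogenization are inverse bijections between factors of $x_1^d+1$ and homogeneous-after-substitution factors of $f$), and that no genuine factor of $f$ can involve a variable $x_j$ outside the support of $g$ and $h$ — this follows since $f \in D[g,h]$, the ring $D[g,h]$ is factorially closed in $D[x_1,\dots,x_k]$ in the relevant sense (a product of two polynomials lies in $D[g,h]$ only if, after clearing, each factor does, because $g,h$ are coprime to the remaining variables), but it deserves a clean statement. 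Everything downstream is then a mechanical translation of classical facts about $x^d+1$.
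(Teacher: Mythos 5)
There is a genuine gap, and it sits exactly where you yourself flagged the main obstacle. Your reduction to $x_1^d+1$ and everything downstream (squarefreeness via the derivative, the cyclotomic factorization, the bookkeeping of $\Phi_e$) is fine, but the two claims that make the substitution map a \emph{bijection on factors} are justified by false general principles. First, $D[g,h]$ is \emph{not} factorially closed in $D[x_1,\ldots,x_k]$, even up to monomial units: with $g=x_1^2$ and $h=x_2$ (disjoint support, exponents coprime to everything in sight) one has $g-h^2=x_1^2-x_2^2=(x_1-x_2)(x_1+x_2)\in D[g,h]$ while neither factor lies in $D[x_1^2,x_2]$. So ``a product of two polynomials lies in $D[g,h]$ only if each factor does'' cannot be the reason that every factor of $f$ lies in $D[g,h]$. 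Second, your irreducibility argument for $g-\zeta h$ (``a degree-one element in two algebraically independent generators'') proves too much: $x_1^2-x_2^2$ is degree one in $D[x_1^2,x_2^2]$ and is reducible. The irreducibility of $g-\zeta h$ is precisely the $d=1$ case of Ostrowski's theorem and is not free. Tellingly, your argument never uses the hypothesis that $d$ is the \emph{full} gcd of the exponents, i.e.\ that the exponent vector of $g/h$ is primitive --- but both sub-claims fail without it, so no argument that ignores it can be correct.

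The paper closes this gap with one idea you are missing: pass to the Laurent polynomial ring $D[x_1^{\pm 1},\ldots,x_k^{\pm 1}]$ (legitimate since neither $f$ nor $x_1^d+1$ has monomial factors), where $f$ is associate to $h^{-d}f=\prod_j x_j^{(u_j-v_j)}+1$. Because $\gcd\bigl((u_1-v_1)/d,\ldots,(u_k-v_k)/d\bigr)=1$, this primitive vector extends to an invertible integer matrix, which induces a \emph{monomial automorphism} $\alpha$ of the Laurent ring with $\alpha(x_1)=g/h$ and hence $\alpha(x_1^d+1)=h^{-d}f$. Since all factors of $x_1^d+1$ in the big Laurent ring visibly lie in $D[x_1^{\pm1}]$, the automorphism transports its factorization, irreducible factors included, onto that of $f$; unwinding $\alpha$ on $D[x_1^{\pm1}]$ and clearing denominators by powers of $h$ recovers exactly your homogenization map. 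If you replace your ``factorial closedness'' and ``degree-one'' arguments by this change-of-variables step, the rest of your write-up goes through as stated.
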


\begin{proof}
  \cite[Theorem IX]{O} asserts that $f$ is absolutely irreducible if and only if $d = 1$.
  We follow Ostrowski's proof to obtain our slightly more refined claim.
  Since $f$ and $x_1^d + 1$ clearly do not have any non-trivial monomial factors, we may instead consider their factorizations in the the Laurent polynomial rings $D[x_1^{\pm 1}, \ldots, x_k^{\pm 1}]$, respectively $D[x_1^{\pm1 }]$.
  Here $f$ is associated to
  \[
    h^{-d}f = \prod_{j=1}^k x_j^{u_j - v_j} + 1.
  \]

Set $a_{1j} = (u_j - v_j )/d$ for $j \in [1,k]$.
  Since $\gcd(a_{1j}, \ldots, a_{kj}) = 1$, we can find $a_{ij} \in \Z$ for $i \in [2,k]$ and $j \in [1,k]$ such that the matrix $(a_{ij})$ is invertible over $\Z$.
  This induces an automorphism $\alpha$ of $D[x_1^{\pm 1}, \ldots, x_k^{\pm 1}]$ with
  \[
    \alpha(x_i) = \prod_{j=1}^k x_j^{a_{ij}}.
  \]
  Noting that $\alpha(x_1^d + 1) = h^{-d} f$, and that all factors of $x_1^d +1$ clearly lie in $D[x_1^{\pm 1}]$, the claim about the bijection between factors follows.

  The remaining claims are immediate consequences of this together with the corresponding well known statements about $x_1^d + 1$.
\end{proof}

\begin{defi} [Column-gcd] \label{defi:bgcd}
  Let $\Sigma=(\x,\y,B)$ be a seed with $\x=(x_1,\ldots,x_n)$ and $\y = (x_{n+1}, \ldots, x_{n+m})$.
  For $i \in [1,n]$, let
  \[
    \bgcd_i = \gcd(\, b_{ji} \ \vert\ j \in [1,n+m] \,) \in \Z_{\ge 0}
  \]
  denote the greatest common divisor of the $i$-th column of the exchange matrix.
\end{defi}

The characterization in \ref*{prop:exchpoly:factor}\ref*{prop:exchpoly:factor:3} in the following proposition also appears in \cite[Lemma 3.1]{BFZ}.

\begin{prop} \label{prop:exchpoly}
  Let $\Sigma=(\x,\y,B)$ be a seed with $\x=(x_1,\ldots,x_n)$ and $\y = (x_{n+1}, \ldots, x_{n+m})$.
  \begin{enumerate}
    \item \label{prop:exchpoly:rep} The exchange polynomials associated with $\Sigma$ have no repeated factors.
    \item \label{prop:exchpoly:factor} For non-constant exchange polynomials $f_i$ and $f_j$ with $i$,~$j \in [1,n]$, the following statements are equivalent.
      \begin{equivenumerate}
        \item \label{prop:exchpoly:factor:1} $f_i$ and $f_j$ have a non-trivial common factor;
        \item \label{prop:exchpoly:factor:2} $f_i = g^{\bgcd_i} + h^{\bgcd_i}$ and $f_j = g^{\bgcd_j} + h^{\bgcd_j}$ with monomials $g$,~$h \in K[\x,\y]$ having disjoint support, at least one of which is non-trivial, and $\val_2(\bgcd_i) = \val_2(\bgcd_j)$.
        \item \label{prop:exchpoly:factor:3} There exist odd $d_i$,~$d_j \in \Z$ such that $d_j b_{ki} = d_i b_{kj}$ for all $k \in[1,n+m]$.
      \end{equivenumerate}
      In this case, let $l=\val_2(\bgcd_i)=\val_2(\bgcd_j)$ and let $c=\gcd(\bgcd_i , \bgcd_j) / 2^l$ be the greatest common divisor of the odd parts of $\bgcd_i$ and $\bgcd_j$.
      Then
      \[
        g^{2^l c} + h^{2^l c} \in \Z[\x,\y]
      \]
      is a greatest common divisor of $f_i$ and $f_j$.
    \item \label{prop:exchpoly:equiv}  Having a non-trivial common factor is an equivalence relation on the set of exchange polynomials $\{ f_1, \ldots, f_n \}$.
    \item \label{prop:exchpoly:irredKbar}   For $K$ an algebraically closed field,  a non-constant exchange polynomial $f_i$ is irreducible if and only if $\bgcd_i=1$.
    \item \label{prop:exchpoly:irred}   For $K=\Z$ or $K=\Q$, a non-constant exchange polynomial $f_i$ is irreducible if and only if $\bgcd_i$ is a power of $2$.
  \end{enumerate}
\end{prop}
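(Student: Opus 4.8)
The plan is to reduce everything to Lemma~\ref{lemma:poly}, applied to the polynomial ring $K[\x,\y]$ (with base ring $D=K$, and also $D=\Z$, $D=\Q$, or $D=\overline{K}$ an algebraic closure, as convenient). For a non-constant exchange polynomial one has the canonical decomposition $f_i = g_i^{\bgcd_i} + h_i^{\bgcd_i}$ with monomials $g_i = \prod_{b_{ki}>0}x_k^{b_{ki}/\bgcd_i}$ and $h_i = \prod_{b_{ki}<0}x_k^{-b_{ki}/\bgcd_i}$ of disjoint support, not both trivial; here the two monomials $g_i$, $h_i$ and the exponent $\bgcd_i$ are uniquely determined by $f_i$. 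Part~\ref*{prop:exchpoly:rep} is then immediate from Lemma~\ref{lemma:poly}\ref*{lemma:poly:bij} (and trivial for the only possible constant exchange polynomial, $f_i=2$, occurring when $K=\Z$). For part~\ref*{prop:exchpoly:irredKbar}, Lemma~\ref{lemma:poly}\ref*{lemma:poly:factalg} writes $f_i$ as a product of $\card{\mu_{2\bgcd_i}(K)\setminus\mu_{\bgcd_i}(K)}=\bgcd_i$ irreducible binomials, so $f_i$ is irreducible if and only if $\bgcd_i=1$; for part~\ref*{prop:exchpoly:irred}, writing $\bgcd_i=2^l c$ with $c$ odd, Lemma~\ref{lemma:poly}\ref*{lemma:poly:factZ} writes $f_i$ as a product of as many irreducible factors as $c$ has divisors, so $f_i$ is irreducible if and only if $c=1$, i.e.\ $\bgcd_i$ is a power of $2$.

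The substance is part~\ref*{prop:exchpoly:factor}. For \ref*{prop:exchpoly:factor:1}${}\Rightarrow{}$\ref*{prop:exchpoly:factor:2} I would pass to an algebraic closure $\overline K$ (or $\overline \Q$ if $K=\Z$): by Lemma~\ref{lemma:poly}\ref*{lemma:poly:factalg} the irreducible factors of $f_i$ over $\overline K$ are exactly the binomials $g_i-\zeta h_i$ with $\zeta^{\bgcd_i}=-1$, and likewise for $f_j$. A common factor of $f_i$ and $f_j$ over $K$ has a non-constant irreducible factor over $\overline K$, which is then associate both to some $g_i-\zeta h_i$ and to some $g_j-\zeta' h_j$; comparing the monomials that occur forces $\{g_i,h_i\}=\{g_j,h_j\}$ as sets of monomials, and after relabelling the two monomials of $f_j$ so that $g:=g_i=g_j$ and $h:=h_i=h_j$, comparing coefficients forces $\zeta=\zeta'$, so the single root $\zeta$ satisfies $\zeta^{\bgcd_i}=-1=\zeta^{\bgcd_j}$. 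Writing $\ord(\zeta)=2^s t$ with $t$ odd, the relation $\zeta^{2^a b}=-1$ with $b$ odd holds if and only if $s=a+1$ and $t\mid b$; applying this to $\bgcd_i$ and $\bgcd_j$ gives $\val_2(\bgcd_i)=s-1=\val_2(\bgcd_j)$, which is \ref*{prop:exchpoly:factor:2}.

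The equivalence \ref*{prop:exchpoly:factor:2}${}\Leftrightarrow{}$\ref*{prop:exchpoly:factor:3} is a direct translation: $\{g_i,h_i\}=\{g_j,h_j\}$ says exactly that the columns $(b_{ki})_k$ and $(b_{kj})_k$ of $B$ are proportional, say $\bgcd_j(b_{ki})_k=\pm\bgcd_i(b_{kj})_k$, and dividing out $2^{\val_2(\bgcd_i)}=2^{\val_2(\bgcd_j)}$ makes the coefficients odd, giving \ref*{prop:exchpoly:factor:3}; conversely, given \ref*{prop:exchpoly:factor:3} with $\gcd(d_i,d_j)=1$ (without loss), one obtains $(b_{ki})_k=d_i\mathbf b$ and $(b_{kj})_k=d_j\mathbf b$ for a common integer vector $\mathbf b$, so that $\bgcd_i=\abs{d_i}\gcd(\mathbf b)$ and $\bgcd_j=\abs{d_j}\gcd(\mathbf b)$ have equal $2$-adic valuation and determine the same pair of monomials. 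For \ref*{prop:exchpoly:factor:2}${}\Rightarrow{}$\ref*{prop:exchpoly:factor:1}, together with the gcd statement, set $l=\val_2(\bgcd_i)=\val_2(\bgcd_j)$ and let $c$ be the greatest common divisor of the odd parts of $\bgcd_i$ and $\bgcd_j$; Lemma~\ref{lemma:poly}\ref*{lemma:poly:factZ} writes $f_i$, $f_j$, and $g^{2^l c}+h^{2^l c}\in\Z[\x,\y]$ as products of the polynomials $\Phi_{2^{l+1}e}(g,h)$ over the divisors $e$ of the corresponding odd part, and comparing these products (and, over $\overline K$, their refinements into binomials $g-\zeta h$) shows $g^{2^l c}+h^{2^l c}$ to be a common factor, in fact a greatest common divisor of $f_i$ and $f_j$; when $K=\Z$ this last step additionally uses that $f_i$, $f_j$, and $g^{2^l c}+h^{2^l c}$ are primitive, via Gauss's Lemma.

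For part~\ref*{prop:exchpoly:equiv}, reflexivity holds because every exchange polynomial is a non-unit of $K[\x,\y]$, symmetry is clear, and transitivity among non-constant exchange polynomials follows from \ref*{prop:exchpoly:factor:3}, since proportionality of columns of $B$ by an odd ratio is transitive (the product of two odd integers is odd); when $K=\Z$ the only constant exchange polynomial is $2$, which shares a non-unit factor only with other copies of itself, so transitivity extends to all of $\{f_1,\ldots,f_n\}$. I expect the main obstacle to be \ref*{prop:exchpoly:factor:1}${}\Rightarrow{}$\ref*{prop:exchpoly:factor:2}: descending a common factor over $K$ to a common binomial factor over $\overline K$, pinning it down to a specific $g-\zeta h$ for each of $f_i$ and $f_j$ via the rigidity of monomial coefficients, and extracting $\val_2(\bgcd_i)=\val_2(\bgcd_j)$ from the order of $\zeta$; the primitivity and Gauss's Lemma bookkeeping behind the gcd claim over $\Z$ is the other delicate point.
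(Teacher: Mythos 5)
Your proposal is correct and follows essentially the same route as the paper: every claim is reduced to Lemma~\ref{lemma:poly}, the implication \ref{prop:exchpoly:factor:1}${}\Rightarrow{}$\ref{prop:exchpoly:factor:2} passes to an algebraic closure and pins a common irreducible factor down to a binomial $g-\zeta h$ whose order determines $\val_2(\bgcd_i)$, and \ref{prop:exchpoly:factor:2}${}\Leftrightarrow{}$\ref{prop:exchpoly:factor:3} is the same translation into proportionality of columns of $B$ by an odd ratio. The only (cosmetic) difference is that you close the cycle via \ref{prop:exchpoly:factor:2}${}\Rightarrow{}$\ref{prop:exchpoly:factor:1} using the explicit cyclotomic gcd formula, whereas the paper goes \ref{prop:exchpoly:factor:3}${}\Rightarrow{}$\ref{prop:exchpoly:factor:1} by directly exhibiting the common binomial divisor coming from oddness of $d_i$, $d_j$.
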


\begin{proof}
  \ref*{prop:exchpoly:rep}
  If the exchange polynomial $f_i$ is constant, then $i$ is an isolated vertex and we assumed this was not the case when $K$ is a field. Thus, if $f_i$ is constant, then $K=\Z$ and $f_i=2$, where the claim is clear.
  If $f_i$ is non-constant, the claim follows from \ref{lemma:poly:bij} of Lemma~\ref{lemma:poly}.

  \ref*{prop:exchpoly:factor}
  \ref*{prop:exchpoly:factor:1}${}\Rightarrow{}$\ref*{prop:exchpoly:factor:2}
  We may without restriction assume that $K$ is an algebraically closed field.
  For $k \in \{i,j\}$ write the exchange polynomial $f_k$ as
  \[
    f_k = g_k^{\bgcd_k} + h_k^{\bgcd_k} \quad\text{with $g_k$, $h_k \in K[\x,\y]$ monomials with disjoint support.}
  \]
   Using \ref{lemma:poly:factalg} of Lemma~\ref{lemma:poly}, any irreducible factor of $f_k$ is asssociate to
  \[
    g_k  - \zeta_k h_k
  \]
  for some $\zeta_k \in \mu_{2\bgcd_k}(K) \setminus \mu_{\bgcd_k}(K)$.
  Observe that $\val_2(\ord(\zeta_k)) = \val_2(\bgcd_k) + 1$, and hence that $\ord(\zeta_k)$ determines $\val_2(\bgcd_k)$.

  Now assume that $f_i$ and $f_j$ share a non-trivial common factor $h$.
  Without restriction we may assume that that $h$ is irreducible.
  Possibly replacing $h$ by an associate,
  \[
    h = g_i  - \zeta_i h_i = \lambda ( g_j - \zeta_j h_j)
  \]
  with $\lambda \in K^\times$, with $\zeta_i \in \mu_{2\bgcd_i}(K) \setminus \mu_{\bgcd_i}(K)$, and with $\zeta_j \in \mu_{2\bgcd_j}(K) \setminus \mu_{\bgcd_j}(K)$.
  We must have $\{g_i, h_i\} = \{g_j, h_j\}$.
  Moreover, either $\lambda = 1$, or otherwise $\lambda = -\zeta_j^{-1}$.
  In either case, it follows that $\val_2(\bgcd_i) = \val_2(\bgcd_j)$ since $\ord(\zeta_i) = \ord(\zeta_j) = \ord(\zeta_j^{-1})$.

  \ref*{prop:exchpoly:factor:2}${}\Rightarrow{}$\ref*{prop:exchpoly:factor:3}
  Let $l = \val_2(\bgcd_i) = \val_2(\bgcd_j)$.
  Setting $d_i = \bgcd_i / 2^l$ and $d_j = \bgcd_j / 2^l$, we have either $d_j b_{ki} = d_i b_{kj}$ for all $k \in [1,n+m]$ or $d_j b_{ki} = -d_i b_{kj}$ for all $k \in [1,n+m]$.
  
  \ref*{prop:exchpoly:factor:3}${}\Rightarrow{}$\ref*{prop:exchpoly:factor:1}
If $d_i$,~$d_j \in \Z$ are odd integers such that $d_j b_{ki} = d_i b_{kj}$ for all $k \in[1,n+m]$, then the odd integers $d_i'=d_i/\gcd(d_i,d_j)$ and $d_j'=d_j/\gcd(d_i,d_j)$ satisfy the same relations again. Hence without loss of generality we may assume $\gcd(d_i,d_j)=1$. Then $b_{ki} / d_i$ for $k \in [1,n+m]$ is an integer and we can write
  \[
    f_i = \Big(\prod_{\substack{k \in [1,n+m] \\ b_{ki} > 0}} x_k^{b_{ki}/d_i}\Big)^{d_i} + \Big( \prod_{\substack{k \in [1,n+m] \\ b_{ki} < 0}} x_k^{-b_{ki}/d_i} \Big)^{d_i}.
  \]
  Because $d_i$ is odd, this implies that
  \[
    \prod_{\substack{k \in [1,n+m] \\ b_{ki} > 0}} x_k^{b_{ki}/d_i} + \prod_{\substack{k \in [1,n+m] \\ b_{ki} < 0}} x_k^{-b_{ki}/d_i}
  \]
  is a divisor of $f_i$.
  Using $b_{ki} / d_i= b_{kj} / d_j$, we see that $f_j$ has the same divisor.

  Finally, that the stated polynomial is a greatest common divisor of $f_i$ and $f_j$ follows from \ref{lemma:poly:factZ} of Lemma~\ref{lemma:poly}.

  \ref*{prop:exchpoly:equiv}
  Reflexivity and symmetry are clear.
  The only constant exchange polynomial that can appear is $2$.
  For non-constant exchange polynomials, the transitivity follows from \ref*{prop:exchpoly:factor}\ref*{prop:exchpoly:factor:3}.

  \ref*{prop:exchpoly:irredKbar}
  By Lemma~\ref{lemma:poly}.
  
  \ref*{prop:exchpoly:irred}
  By \ref{lemma:poly:factZ} of Lemma~\ref{lemma:poly}.
\end{proof}

As non-constant exchange polynomials are primitive, a non-constant exchange polynomial is a prime element of $K[\x,\y]$ if and only if it is an irreducible polynomial.

\begin{coro} \label{coro:parallel-irred}
  Let $Q$ be an acyclic ice quiver without parallel arrows.
  Then every exchange polynomial of the seed $\Sigma=(\x,\y, Q)$ is a prime element in $K[\x,\y]$.
\end{coro}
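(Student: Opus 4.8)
The plan is to reduce the statement to the irreducibility criteria for exchange polynomials already established in Lemma~\ref{lemma:poly} and Proposition~\ref{prop:exchpoly}, exploiting that the exchange polynomial $f_i$ and its primality depend only on the $i$-th column of $B=B(Q)$, and more precisely only on the column-gcd $\bgcd_i$. First I would fix an exchangeable index $i\in[1,n]$ and dispose of the degenerate case: if $i$ is isolated in $Q$, then $f_i$ is a constant, which by the standing convention of the paper can only happen when $K=\Z$, in which case $f_i=2$ is a prime element of $\Z[\x,\y]$ and there is nothing to prove.

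Next I would show that $\bgcd_i=1$ whenever $i$ is not isolated. Since $Q$ has no parallel arrows and (like every quiver in the paper) is $2$-acyclic, between $i$ and any other vertex $k$ there is at most one arrow, and never one arrow in each direction; hence every entry $b_{ki}$ of the $i$-th column of $B$ lies in $\{-1,0,1\}$. As $i$ is not isolated, at least one such entry is nonzero, so $\bgcd_i=\gcd(b_{ki}\mid k\in[1,n+m])=1$. (Only $2$-acyclicity and the absence of parallel arrows are needed here; full acyclicity of $Q$ does not actually enter.)

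Finally, with $\bgcd_i=1$ I would invoke Proposition~\ref{prop:exchpoly}\ref{prop:exchpoly:irredKbar} and \ref{prop:exchpoly:irred} (over an algebraically closed field, respectively over $\Z$ or $\Q$), or, more uniformly, Lemma~\ref{lemma:poly} applied with $d=\bgcd_i=1$: its bijection identifies the factors of $f_i$ with those of $x_1+1$, and since $x_1+1$ is irreducible in $K[x_1]$, so is $f_i$ in $K[\x,\y]$. Because $f_i$ is non-constant and primitive (its two coefficients are both $1$), the observation recorded just before the corollary upgrades this to the conclusion that $f_i$ is a prime element of $K[\x,\y]$.

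I do not expect a genuine obstacle: all the substantive work is already contained in Lemma~\ref{lemma:poly} and Proposition~\ref{prop:exchpoly}. The only points that require care are the bookkeeping of the constant exchange polynomial $f_i=2$ in the case $K=\Z$, and the elementary but essential remark that ``no parallel arrows'' together with $2$-acyclicity is exactly what forces $|b_{ki}|\le 1$, hence $\bgcd_i=1$.
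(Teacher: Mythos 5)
Your proof is correct and follows essentially the same route as the paper: dispose of isolated vertices (only possible over $K=\Z$, where $f_i=2$ is prime), observe that the absence of parallel arrows forces every entry of the $i$-th column into $\{-1,0,1\}$ so that $\bgcd_i=1$, and then apply Proposition~\ref{prop:exchpoly} together with primitivity to pass from irreducible to prime. Your side remark that full acyclicity of $Q$ is not actually used is accurate; the paper's proof does not use it either.
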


\begin{proof}
  If $i \in [1,n]$ is an isolated vertex and $K=\Z$, then $f_i = 2$ is a prime element.
  If $K$ is a field, our standing assumption is that $Q$ does not contain any isolated vertices.
  Hence, in any other case,
  \[
    f_i = \prod_{j \in N_-(i)} x_j + \prod_{j \in N_+(i)} x_j
  \]
  is a non-constant polynomial with $\bgcd_i = 1$.
  Hence $f_i$ is irreducible by \ref{prop:exchpoly:irredKbar} of Proposition~\ref{prop:exchpoly}.
\end{proof}

\begin{coro} \label{coro:principal_irred}
  Suppose that the seed $\Sigma=(\x,\y, Q)$ has principal coefficients. Then every exchange polynomial is a prime element in $K[\x,\y]$ and two distinct exchange polynomials are coprime.
\end{coro}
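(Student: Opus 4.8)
The plan is to extract both assertions directly from the shape of the exchange matrix of a principal-coefficient seed. Since $\Sigma$ has principal coefficients we have $m=n$, and the $n\times n$ block of $B$ supported on the rows $[n+1,2n]$ is the identity matrix; equivalently, $b_{n+i,i}=1$ for every $i\in[1,n]$, while $b_{n+i,j}=0$ whenever $i\neq j$ in $[1,n]$. Concretely this records that in $\Gamma(B)$ each exchangeable vertex $i$ has a frozen neighbour $n+i$ joined to it by a single arrow, and that distinct exchangeable vertices have distinct frozen neighbours. Irreducibility of each $f_i$ will follow because the $i$-th column-gcd is forced to equal $1$, and coprimality of $f_i$ and $f_j$ for $i\neq j$ will follow from the criterion of Proposition~\ref{prop:exchpoly}\ref{prop:exchpoly:factor} read off at the row $k=n+i$.

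For the irreducibility part, I would fix $i\in[1,n]$ and note that the entry $b_{n+i,i}=1$ sits in the $i$-th column of $B$, so $\bgcd_i=\gcd(b_{ji}\mid j\in[1,2n])=1$; since $x_{n+i}$ then occurs in the first monomial of $f_i$, the polynomial $f_i$ is non-constant. Applying Lemma~\ref{lemma:poly} with $d=\bgcd_i=1$, the factors of $f_i$ are in bijection with those of $x_1+1$; as the latter is irreducible over any domain of characteristic $0$, it follows that $f_i$ is irreducible in $K[\x,\y]$. (Alternatively one could cite Proposition~\ref{prop:exchpoly}\ref{prop:exchpoly:irredKbar}, after extending scalars to an algebraic closure when $K$ is a field, and Proposition~\ref{prop:exchpoly}\ref{prop:exchpoly:irred} when $K=\Z$, since $1$ is a power of $2$.) Being a non-constant irreducible polynomial, $f_i$ is then a prime element of $K[\x,\y]$ by the remark preceding the statement.

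For the coprimality part, I would fix $i\neq j$ in $[1,n]$; both $f_i$ and $f_j$ are non-constant by the above, so Proposition~\ref{prop:exchpoly}\ref{prop:exchpoly:factor} is applicable. If $f_i$ and $f_j$ had a non-trivial common factor, the equivalence \ref{prop:exchpoly:factor:1} $\Leftrightarrow$ \ref{prop:exchpoly:factor:3} would supply odd integers $d_i,d_j$ with $d_j b_{ki}=d_i b_{kj}$ for all $k\in[1,2n]$; evaluating at $k=n+i$ gives $d_j=d_j b_{n+i,i}=d_i b_{n+i,j}=0$, contradicting that $d_j$ is odd. Hence $f_i$ and $f_j$ share no non-trivial factor, so (being irreducible) they are coprime, and in particular distinct. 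I do not expect a real obstacle here; the only point calling for a little care is that the irreducibility statements of Proposition~\ref{prop:exchpoly} are stated for algebraically closed fields or for $K\in\{\Z,\Q\}$, so for an arbitrary field of characteristic $0$ one should either descend irreducibility from $\overline K$ or, as above, invoke Lemma~\ref{lemma:poly} directly.
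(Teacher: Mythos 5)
Your proposal is correct and follows essentially the same route as the paper: it derives $\bgcd_i=1$ from the identity block to get irreducibility of each $f_i$, and rules out a common factor of $f_i$ and $f_j$ via criterion \ref{prop:exchpoly:factor}\ref{prop:exchpoly:factor:3} of Proposition~\ref{prop:exchpoly} evaluated at the row $k=n+i$. Your side remark about descending irreducibility over an arbitrary field of characteristic $0$ (rather than only citing the algebraically closed or $\Z$,~$\Q$ cases) is a fair point of care, but it does not change the argument.
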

\begin{proof}
By construction we have $\bgcd_i = 1$ for all $i\in[1,n]$, hence every $f_i$ is irreducible by \ref{prop:exchpoly:irredKbar} of Proposition~\ref{prop:exchpoly}. Moreover, for $i\neq j$ there do not exist odd $d_i$,~$d_j \in \Z$ such that $d_j b_{ki} = d_i b_{kj}$ for all $k \in[1,n+m]$, hence $f_i$ and $f_j$ do not have a non-trivial common factor by \ref{prop:exchpoly:factor}\ref{prop:exchpoly:factor:3} of Proposition~\ref{prop:exchpoly}.
\end{proof}

Since irreducible factors of non-constant exchange polynomials correspond to irreducible factors of $x^d +1$ for some $d \ge1$, it is now easy to count the number of irreducible factors in an exchange polynomial.
For instance, if $K$ is an algebraically closed field, then $x^d + 1$ splits into linear factors, and therefore has exactly $d$ irreducible factors.
On the other hand, if $K=\Z$ or $K=\Q$, and $d=2^lc$ with $l \ge 0$ and $c$ odd, then the irreducible factors of $x^d+1$ are in bijection with the divisors of $c$.
Hence, $x^d+1$ is a product of $\sigma_0(c)$ irreducible polynomials, where $\sigma_0(c)$ denotes the number of positive divisors of $c \in \Z_{>0}$.
For other fields of characteristic $0$, the number of factors can be determined in terms of the primitive roots of unity contained in $K$.

Since a greatest common divisor of two exchange polynomials has a similar form as the exchange polynomials themselves, the discussion in the previous paragraph applies to the counting of common irreducible factors of two or more exchange polynomials as well.
Thus, the counting of (common) irreducible factors of two or more exchange polynomials boils down entirely to elementary divisibility properties of the $\bgcd_i$ and knowledge about which roots of unity are contained in $K$.
We come back to this in Section~\ref{sec:class-group-acyclic}, where this information leads us directly to the determination of the rank of the class group of a cluster algebra with acyclic seed.

\subsection{Partner sets}\label{subsection partners}

Fix a seed $\Sigma=(\x,\y,B)$ with $\x =(x_1,\ldots, x_n)$ and $\y=(x_{n+1}, \ldots, x_{n+m})$. Let $A$ be the cluster algebra $A(\Sigma)$.

\begin{defi}[Partners] \label{defi:partners}
  \mbox{}
  \begin{enumerate}
    \item
      Two exchangeable indices $i$,~$j \in [1,n]$ are \emph{partners} if the exchange polynomials $f_i$ and $f_j$ have a non-trivial common factor in $K[\x,\y]$.
      Partnership is an equivalence relation on the set of exchangeable indices by Proposition~\ref{prop:exchpoly}.
      An equivalence class is called a \emph{partner set}.

    \item
      Let $g \in K[\x,\y]$ be a non-unit.
      An exchangeable index $i \in [1,n]$ is called a \emph{$g$-partner} \textup{(}or a \emph{$gK[\x,\y]$-partner}\textup{)} if the corresponding exchange polynomial $f_i$ is divisible by $g$.
      The \emph{set of $g$-partners} is
      \[
        \ptns{g}=\ptns{gK[\x,\y]} = \{\, i \in [1,n] \mid \text{$i$ is a $g$-partner} \,\}.
      \]
  \end{enumerate}
\end{defi}

It is convenient to define the set of $g$-partners in terms of the principal ideal $gK[\x,\y]$: if $gK[\x,\y]=g'K[\x,\y]$, then $\ptns{g}=\ptns{g'}$.

  If $i$,~$j \in [1,n]$ are both $g$-partners for some $g$, then trivially they are also partners.
  By our discussion on exchange polynomials, Proposition~\ref{prop:exchpoly}, the partner relation between exchangeable indices can be described entirely in terms of the exchange matrix $B$.
  Specifically, for $i \in [1,n]$, let $b_{*i}$ denote the $i$-th column of the exchange matrix $B$ and $\bgcd_i$ the greatest common divisor of its entries.
  By \ref{prop:exchpoly:factor}\ref{prop:exchpoly:factor:3} of Proposition~\ref{prop:exchpoly}, two non-isolated exchangeable indices $i$ and $j$ are partners if and only if $\val_2(\bgcd_i) = \val_2(\bgcd_j)$ and $b_{*i}/\bgcd_i = \pm b_{*j}/\bgcd_j$.
  Isolated vertices (which we only allow in the case $K=\Z$) are always partners, since their exchange polynomial is the constant $2$.

  If two indices $i$ and $j$ are partners, then
\begin{itemize}
\item either $N_{-}(i)=N_{-}(j)$ and $N_{+}(i)=N_{+}(j)$ hold, or $N_{-}(i)=N_{+}(j)$ and $N_{+}(i)=N_{-}(j)$ hold; in short
  \[
    \{ N_-(i), N_+(i) \} = \{ N_-(j), N_+(j) \},
  \]
  and
  \item we have $\val_2(\bgcd_i) = \val_2(\bgcd_j)$.
\end{itemize}
In particular, we have $N(i)=N(j)$ for partners $i$,~$j$.

If the exchange polynomials are all prime elements, then $i$,~$j$ are partners if and only if $f_i=f_j$.
This is the case when $B=B(Q)$ for $Q$ an ice quiver without parallel arrows.
In that case, also $\bgcd_i\in \{0,1\}$ for all $i \in [1,n]$.
Thus, $i$ and $j$ are partners if and only if $\{ N_-(i), N_+(i) \} = \{ N_-(j), N_+(j) \}$.

\begin{lemm}
  Two partners are never neighbors of each other.
\end{lemm}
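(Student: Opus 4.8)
The plan is a short argument by contradiction. Suppose $i$, $j \in [1,n]$ are partners that are, nonetheless, neighbors of one another; in particular $i \neq j$. First I would invoke the structural consequence of partnership recorded just before the lemma: for partners one has $\{\,N_-(i),N_+(i)\,\} = \{\,N_-(j),N_+(j)\,\}$, and hence, taking the union of the two sets, $N(i) = N(j)$. (This is itself a consequence of the characterization of partnership in terms of the columns of the exchange matrix in Proposition~\ref{prop:exchpoly}.)

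Next, since $i$ and $j$ are neighbors, $j \in N(i)$. Combining this with $N(i) = N(j)$ gives $j \in N(j)$, i.e.\ there is an arrow $j \to j$ in the ice quiver $\Gamma(B)$ --- a loop. But all quivers considered are $2$-acyclic, so there are no loops (equivalently, the exchange matrix is sign-skew-symmetric, which forces $b_{jj} = 0$, so $j$ cannot be a neighbor of itself). This contradiction proves the lemma.

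I would then add one sentence confirming no case is lost. If $i$ is isolated --- only possible when $K = \Z$, where $f_i = f_j = 2$ --- then $N(i) = \emptyset$ and $j \in N(i)$ is already impossible; otherwise the argument above applies verbatim. Alternatively, one can run the whole thing matrix-theoretically: by the characterization of partnership in Proposition~\ref{prop:exchpoly}, non-isolated partners satisfy $b_{ki}/\bgcd_i = \pm\, b_{kj}/\bgcd_j$ for every $k \in [1,n+m]$; specializing $k$ to $i$ and to $j$ and using $b_{ii} = b_{jj} = 0$ forces $b_{ij} = b_{ji} = 0$, so $i$ and $j$ are not neighbors.

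There is no real obstacle here: the only point requiring attention is making sure the input ``partners have equal neighborhoods'' is legitimately in hand, which it is --- it is exactly the observation stated immediately before the lemma, derived from Proposition~\ref{prop:exchpoly}. Everything else is a one-line deduction from $2$-acyclicity.
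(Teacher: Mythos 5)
Your proposal is correct and follows essentially the same route as the paper: assume partners $i$, $j$ are neighbors, use the fact that partners satisfy $N(i) = N(j)$ to deduce $j \in N(j)$, and contradict the absence of loops in $\Gamma(B)$. The extra remarks about the isolated case and the matrix-theoretic variant are fine but not needed beyond the paper's one-line argument.
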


\begin{proof}
  Let $i$,~$j \in [1,n]$ be partners and suppose they were neighbors. Then $j \in N(i) = N(j)$, contradicting that $\Gamma(B)$ has no loops.
\end{proof}

\begin{exam}
  \label{exam:bigquiver}
  Consider the following ice quiver, where the frozen vertices are boxed. The number over an arrow indicates that there are that many parallel arrows. Let $A$ be the cluster algebra over $\Q$ defined by $B(Q)$.
  \begin{center}
    \begin{tikzcd}[row sep = small]
      && 2 \arrow[drr, "3"]  
      & & 7  && 6 \arrow[dll]  \\
      \boxed{9} \arrow[rr, "2"]\arrow[urr, "3"] \arrow[drr] && 3  \arrow[rr]
      & & 1\arrow[rr] \arrow[drr] \arrow[d, "2"] \arrow[u, "6"] & & \boxed{10}\arrow[u]\\
    && 4 \arrow[urr] && 8  && 5 \arrow[u]\end{tikzcd}
  \end{center}
  
The exchange polynomials in this case are:
\[ f_1= x_2^3 x_3 x_4 x_6 + x_5 x_7^6 x_8^2 x_{10} \ \ ; \ \ f_2= x_9^3 + x_1^3 \ \ ; \ \ f_3= x_9^2 + x_1  \ \ ; \ \ f_4=x_9 + x_1\]\[f_5 = f_6 = x_1 + x_{10} \ \ ; \ \ f_7= x_1^6+1 \ \ ; \ \ f_8= x_1^2 +1. \]  
So $f_2$ and $f_4$ have the irreducible factor $x_9 + x_1$ in common. The polynomials $f_7$ and $f_8$ share the factor $x_1^2+1$. The partner sets are $P(x_1 + x_9) = \{ 2,4\}$, $P(x_1 + x_{10})= \{ 5,6\}$, and $P(x_1^2 +1) = \{ 7,8\}$, plus the unitary sets $\{ 1 \}$ and $\{3\}$. 
\end{exam}

For every exchangeable index $i\in [1,n]$ we denote by $x_i'$ the cluster variable obtained by mutation of $\Sigma$ in direction $i$.
Equivalently, $x_i'$ is the generator in Berenstein--Fomin--Zelevinsky's presentation of $A(\Sigma)$.

\begin{lemm}
\label{Lemma:MutatedPartners}
Suppose that $i_1$, $\ldots\,$,~$i_r \in [1,n]$ are partners in $\Sigma$ with corresponding cluster variables $x_{i_1}$, $\ldots\,$,~$x_{i_r}$.
Then $(\mu_{i_s}\circ\mu_{i_t})(\Sigma)=(\mu_{i_t}\circ\mu_{i_s})(\Sigma)$ for all $s$,~$t \in [1,r]$. In particular, the seed $(\mu_{i_r}\circ\dots\circ\mu_{i_1})(\Sigma)$ contains the cluster variables $x_{i_1}'$, $\ldots\,$,~$x_{i_r}'$. 
\end{lemm}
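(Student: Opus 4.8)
The plan is to reduce the statement to the classical fact that mutations in two non-adjacent directions commute, the point being that, by the preceding Lemma, partners are never neighbours, so that for distinct partners $i$, $j$ one has $b_{ij}=b_{ji}=0$ in $\Sigma$ (the principal part of $B$ is sign-skew-symmetric, so $b_{ij}=0$ iff $b_{ji}=0$). First I would isolate a single-mutation bookkeeping observation: if $i\neq j$ and $b_{ji}=0$ in a seed $\Theta=(\x,\y,B)$, then mutating at $j$ leaves the $i$-th column of $B$ unchanged — this is immediate from the mutation formula, since $\mu_j(B)_{ji}=-b_{ji}=0$ while for $k\neq j$ the correction term $\tfrac12(\lvert b_{kj}\rvert b_{ji}+b_{kj}\lvert b_{ji}\rvert)$ vanishes, so $\mu_j(B)_{ki}=b_{ki}$ — and leaves the cluster variable in every position other than $j$ unchanged, by the definition of mutation. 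If moreover $j$ is not a neighbour of $i$, then every variable occurring in $f_i$, i.e.\ each $x_k$ with $b_{ki}\neq0$, sits in a position $k\neq j$; hence $\mu_j$ changes neither $f_i$ (same exponents, same variables) nor $x_i$, so the exchange polynomial attached to $i$ is still $f_i$ after mutating at $j$. I would also record that the property ``$i_1,\dots,i_r$ are pairwise non-neighbours'' is preserved under mutating at any one of the $i_t$: for $u\neq t$ the column $b_{*i_t}$ is untouched (since $b_{i_u i_t}=0$), while the entry $b_{i_u i_t}$ only changes sign, so it stays zero.

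For the first assertion, fix distinct $i_s$, $i_t$; they are partners, hence $b_{i_s i_t}=b_{i_t i_s}=0$, and I would prove $(\mu_{i_s}\circ\mu_{i_t})(\Sigma)=(\mu_{i_t}\circ\mu_{i_s})(\Sigma)$ as follows. On the level of matrices, a routine case distinction on the entry $(k,l)$, according to whether $\{k,l\}$ meets $\{i_s,i_t\}$, gives $\mu_{i_s}\mu_{i_t}(B)=\mu_{i_t}\mu_{i_s}(B)$; this is the usual commutation of matrix mutations at non-adjacent vertices, the hypothesis $b_{i_s i_t}=b_{i_t i_s}=0$ being precisely what kills the mixed correction terms. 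On the level of clusters, the bookkeeping observation (applied with the two indices interchanged) shows that $f_{i_t}$ is unchanged by $\mu_{i_s}$ and $f_{i_s}$ by $\mu_{i_t}$, while mutating at $i_s$ does not touch position $i_t$ and vice versa; hence both composite seeds have the cluster obtained from $\x\cup\y$ by replacing $x_{i_s}$ with $f_{i_s}/x_{i_s}$ and $x_{i_t}$ with $f_{i_t}/x_{i_t}$. So the two seeds coincide.

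For the ``in particular'' part I would simply trace $\mu_{i_r}\circ\cdots\circ\mu_{i_1}$ applied to $\Sigma$ (the $i_t$ being distinct, as they lie in a single partner set). Fix $s$ and let $p$ be the unique index with $i_p=i_s$. Up to step $p-1$ we have only mutated in directions which are partners of $i_s$ and distinct from $i_s$; by the stability remark these directions remain pairwise non-neighbours along the way, so the single-mutation observation, applied repeatedly, shows that at that stage the exchange polynomial of $i_s$ is still $f_{i_s}$ and the variable in position $i_s$ is still $x_{i_s}$. Therefore step $p$ places $f_{i_s}/x_{i_s}=x_{i_s}'$ in position $i_s$, and steps $p+1,\dots,r$, being mutations in directions $\neq i_s$, never alter that position again. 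Hence $x_{i_s}'$ occurs in $(\mu_{i_r}\circ\cdots\circ\mu_{i_1})(\Sigma)$; as $s$ was arbitrary, this seed contains all of $x_{i_1}',\dots,x_{i_r}'$.

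I expect the main obstacle to be nothing conceptually deep: the matrix case check in the second step is purely mechanical (several cases, all straightforward), and the one point that genuinely requires care is to verify that the pairwise non-neighbour condition survives each intermediate mutation, so that the bookkeeping observation and the commutation can legitimately be invoked again and again.
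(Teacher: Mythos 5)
Your proof is correct and takes essentially the same route as the paper: the paper's proof just observes that partners are never neighbours (the preceding lemma) and cites the standard fact \cite[Exercise 2.7.7]{FWZ} that mutations in two non-adjacent directions commute. You prove that cited fact directly from the mutation formulas and also spell out the column/position bookkeeping for the ``in particular'' clause (including that pairwise non-adjacency of the partner indices survives each intermediate mutation), which the paper leaves implicit.
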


\begin{proof}
  The claim follows from a more general statement, compare Fomin--Williams--Zelevinsky in \cite[Exercise 2.7.7]{FWZ}:
  If two indices $i$,~$j \in [1,n]$ are not neighbors of each other, then $\mu_i\circ\mu_j=\mu_j\circ\mu_i$.\end{proof}

\begin{rema}
  A result of Caldero and Keller (see \cite[Corollary 4]{CK}) asserts that if two ice quivers $Q$ and $\widetilde{Q}$ are acyclic and the two seeds $(\x,\y,B(Q))$ and $(\widetilde{\x},\widetilde{\y},B(\widetilde{Q}))$ are mutation-equivalent, then $Q$ and $\widetilde{Q}$ are related by a sequence of mutations at sinks or sources. These mutations do not change the underlying undirected diagram of the quiver. The theorem of Caldero--Keller implies that for every $k \in \Z_{>0}$ the number of partner sets of size $k$ in $Q$ is equal to the number of partner sets of size $k$ in $\widetilde{Q}$.
\end{rema}

\section{Class groups}

\label{sec:class-groups}
In this section we prove our first main theorem, Theorem~\ref{Thm:MainGeneral}.
The results in this section apply to all cluster algebras that are Krull domains.
We show that a cluster algebra that is a Krull domain always has a finitely generated free abelian class group, with rank determined by the number of height-$1$ primes over the exchangeable variables of a fixed cluster.
This proves a conjecture of Lampe, who, in \cite{L2}, conjectured that the class group of every acyclic cluster algebra is torsion-free.
Moreover, we establish that every class contains infinitely many height-$1$ primes.

\subsection{Class groups in Krull domains}\label{subsection class group Krull domains}

To begin with, we work in a slightly more general setting.

\begin{theo} 
\label{Thm:CG}
Let $A$ be a Krull domain, and let $x_1$, $\ldots\,$,~$x_n\in A$ be elements such that the localization $A[x_1^{-1},\ldots,x_{n}^{-1}] = D[x_1^{\pm 1},\ldots,x_n^{\pm 1}]$ is a Laurent polynomial ring in the indeterminates $x_1$, $\ldots\,$,~$x_n$ over some factorial subring $D \subseteq A$. Let $\prim_1$, $\ldots\,$,~$\prim_t$ be the pairwise distinct height-$1$ prime ideals of $A$ containing one of the elements $x_1$, $\ldots\,$,~$x_n$. Suppose that
\[
x_iA=\vprod_{j=1}^t\prim_j^{a_{ij}}
\]
with $\mathbf{a}_i=(a_{ij})_{j=1}^t\in\Z_{\ge 0}^t$.
Then $\mathcal{C}(A)\cong \mathbb{Z}^t/ \langle \mathbf{a}_i\ \vert \ i\in [1,n] \rangle$.
\end{theo}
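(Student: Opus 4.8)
The plan is to invoke Nagata's Theorem (Theorem~\ref{thm:nagata}) for the multiplicative set $S \subseteq A^\bullet$ generated by $x_1$, $\ldots\,$,~$x_n$. By hypothesis $S^{-1}A = A[x_1^{-1},\ldots,x_n^{-1}] = D[x_1^{\pm 1},\ldots,x_n^{\pm 1}]$ is a Laurent polynomial ring over the factorial domain $D$, hence itself factorial, so $\mathcal C(S^{-1}A) = 0$. A prime ideal of $A$ meets $S$ if and only if it contains one of $x_1$, $\ldots\,$,~$x_n$; thus the height-$1$ primes of $A$ meeting $S$ are exactly $\prim_1$, $\ldots\,$,~$\prim_t$. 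Nagata's Theorem then says that $\overline{j_S}\colon \mathcal C(A) \to \mathcal C(S^{-1}A) = 0$ is surjective with kernel generated by $[\prim_1]$, $\ldots\,$,~$[\prim_t]$; since the target is trivial, $\mathcal C(A)$ itself is generated by these classes. This produces a surjective group homomorphism $\psi\colon \Z^t \to \mathcal C(A)$, $e_j \mapsto [\prim_j]$, and it remains to show $\ker\psi = \langle \mathbf a_i \mid i \in [1,n]\rangle$.

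One inclusion is immediate: $\psi(\mathbf a_i) = \sum_{j=1}^t a_{ij}[\prim_j] = \big[\vprod_{j=1}^t \prim_j^{a_{ij}}\big] = [x_iA] = 0$. For the converse, let $\mathbf n = (n_j)_{j=1}^t \in \ker\psi$, so that the divisorial fractional ideal $\fa := \vprod_{j=1}^t \prim_j^{n_j}$ is principal, say $\fa = zA$ with $z \in \mathbf q(A)^\times$. Localizing at $S$ and using that each $\prim_j$ meets $S$, we get $z\,S^{-1}A = S^{-1}\fa = S^{-1}A$, so $z$ is a unit of $D[x_1^{\pm 1},\ldots,x_n^{\pm 1}]$. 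Since a Laurent polynomial ring over a domain has unit group $D^\times \times \Z^n$, we may write $z = u\,x_1^{k_1}\cdots x_n^{k_n}$ with $u \in D^\times \subseteq A^\times$ and $k_1$, $\ldots\,$,~$k_n \in \Z$. Therefore, as fractional ideals of $A$,
\[
  \fa = zA = x_1^{k_1}\cdots x_n^{k_n}\, A = \vprod_{i=1}^n (x_iA)^{k_i} = \vprod_{j=1}^t \prim_j^{\,\sum_{i=1}^n k_i a_{ij}}.
\]
As $\prim_1$, $\ldots\,$,~$\prim_t$ are distinct members of the free basis $\Xx(A)$ of $(\cF_v(A)^\times,\cdot_v)$, comparing exponents yields $n_j = \sum_{i=1}^n k_i a_{ij}$ for every $j$, i.e.\ $\mathbf n = \sum_{i=1}^n k_i \mathbf a_i$. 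Hence $\ker\psi = \langle \mathbf a_i \mid i \in [1,n]\rangle$, and the first isomorphism theorem gives $\mathcal C(A) \cong \Z^t/\langle \mathbf a_i \mid i \in [1,n]\rangle$.

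The only genuinely substantive point — more a matter of care than of difficulty — is the passage, in the second paragraph, from ``$\vprod_j \prim_j^{n_j}$ is principal'' to an explicit generator: localizing at $S$ forces that generator to be a monomial in $x_1$, $\ldots\,$,~$x_n$ up to a unit of $D$, which is exactly where the Laurent polynomial structure and its unit group enter. Everything else is a routine application of Nagata's Theorem together with bookkeeping in the free abelian group $(\cF_v(A)^\times,\cdot_v)$.
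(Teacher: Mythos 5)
Your proposal is correct and follows essentially the same route as the paper: Nagata's Theorem applied to the multiplicative set generated by $x_1$, $\ldots\,$,~$x_n$ shows that $\cC(A)$ is generated by $[\prim_1]$, $\ldots\,$,~$[\prim_t]$, and the relations are pinned down by localizing a principal divisorial product to identify its generator as a unit of the Laurent polynomial ring, i.e.\ a monomial in the $x_i$ up to $D^\times$. The only cosmetic difference is that you make the surjection $\psi\colon \Z^t \to \cC(A)$ and the easy inclusion $\langle \mathbf a_i\rangle \subseteq \ker\psi$ explicit, which the paper leaves implicit.
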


\begin{proof}
  Let $H \subseteq \mathcal{C}(A)$ denote the subgroup of $\mathcal C(A)$ generated by the classes of height-$1$ prime ideals that contain one of $x_1$, $\ldots\,$,~$x_n$.
  In other words, $H$ is generated by $[\prim_1]$, $\ldots\,$,~$[\prim_t]$.
  Nagata's Theorem implies that there is a short exact sequence
\[
0\longrightarrow H \longrightarrow \mathcal{C}(A) \longrightarrow \mathcal{C}(A[x_1^{-1},\ldots,x_n^{-1}])\longrightarrow 0.
\]
By assumption we have $\mathcal{C}(A[x_1^{-1}, \ldots, x_n^{-1}])  = \mathcal{C}(D[x_1^{\pm 1}, \ldots, x_n^{\pm 1}])\cong  \mathcal{C}(D)=0$.
It follows that $H \cong \mathcal{C}(A)$ and thus, in particular, that $\mathcal C(A)$ is generated by $[\prim_1]$, $\ldots\,$,~$[\prim_t]$.

Now suppose that there are integers $m_j\in \mathbb{Z}$ with $j\in[1,t]$ such that $\sum_{j=1}^tm_j[\prim_j]=0$ in $\mathcal{C}(A)$.
Then the corresponding divisorial product of height-$1$ prime ideals is principal, that is, there exists an element $f\in \mathbf q(A)$ such that
\begin{equation}
  \label{Eqn:Dec1}
  f A=\vprod_{j=1}^t \prim_j^{m_j}.
\end{equation}
Localization at the multiplicative set $S$ generated by $x_1$, $\ldots\,$,~$x_n$ yields
\[
fA[x_1^{-1},\ldots,x_n^{-1}]=\vprod_{\substack{j=1 \\ \prim_j\cap S= \emptyset}}^t (S^{-1}\prim_j)^{m_j} = A[x_1^{-1},\ldots,x_n^{-1}].
\]
Hence $f$ lies in $A[x_1^{-1},\ldots,x_{n}^{-1}]^{\times} = D[x_1^{\pm 1},\ldots,x_n^{\pm 1}]^{\times}$.
Since $D[x_1^{\pm 1},\ldots,x_n^{\pm 1}]$ is a Laurent polynomial ring over $D$, we may thus write $f=\lambda x_1^{r_1}x_2^{r_2}\cdots x_n^{r_n}$ with some $\lambda\in D^{\times}\subseteq A^{\times}$ and integers $r_i\in\mathbb{Z}$. Substituting the factorizations of $x_iA$ as divisorial products of height-$1$ primes, we obtain
\begin{equation}
\label{Eqn:Dec2}
fA=\vprod_{i=1}^n\Big(\vprod_{j=1}^t\prim_j^{a_{ij}}\Big)^{r_i}=\vprod_{j=1}^t \prim_j^{\sum_{i=1}^nr_ia_{ij}}.
\end{equation}
A comparison of Equations \eqref{Eqn:Dec1} and \eqref{Eqn:Dec2} yields $m_j=\sum_{i=1}^nr_ia_{ij}$ for all $j\in [1,t]$.
In other words $\mathbf{m}=(m_j)_{j=1}^t=\sum_{i=1}^nr_i\mathbf{a}_i$.
This implies $\mathbf{m}\in\langle \mathbf{a}_i \ \vert \ i\in [1,n]\rangle\subseteq\mathbb{Z}^t$ which finishes the proof of the theorem.
\end{proof}

In the next proof we use that every Krull domain possesses the \emph{approximation property}:
Let $\prim_1$, $\ldots\,$,~$\prim_t$ be pairwise distinct height-$1$ prime ideals and $e_1$, $\ldots\,$,~$e_t \in \Z$.
Then there exists $a \in K^\times$ such that $\val_{\prim_j}(a) = e_j$ for $j \in [1,t]$ and $\val_\fp(a) \ge 0$ for every other height-$1$ prime ideal $\prim$.
See \cite[Theorem 5.8]{Fos} or \cite[Definition 2.5.3 and Corollary 2.10.10]{GHK}.

\begin{theo} \label{Thm:PrimeDistribution}
  We keep the assumptions of Theorem~\ref{Thm:CG} and moreover assume that $D$ is an infinite domain, and that either $n\ge 2$, or $n=1$ and $D$ has at least $\card{D}$ height-$1$ prime ideals.
  Then every class of $\mathcal C(A)$ contains precisely $\card{D}$ height-$1$ prime ideals.
\end{theo}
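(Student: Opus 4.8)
The plan is to combine a cardinality estimate for $\Xx(A)$ with an explicit construction of height-$1$ primes in every class, the main ingredients being Theorem~\ref{Thm:CG}, the approximation property, and the factoriality of $L := A[x_1^{-1},\ldots,x_n^{-1}] = D[x_1^{\pm 1},\ldots,x_n^{\pm 1}]$. First I would bound $\card{\Xx(A)}$ from above. As $D$ is infinite and $D \subseteq A \subseteq L$, we have $\card{A} = \card{D}$. Writing $S$ for the multiplicative set generated by $x_1,\ldots,x_n$, the height-$1$ primes of $A$ are the finitely many $\prim_1,\ldots,\prim_t$ together with those disjoint from $S$; by Proposition~\ref{p-locprime} the latter correspond bijectively to $\Xx(L)$, and since $L$ is factorial these are the principal ideals generated by the non-associate prime elements of $L$, of which there are at most $\card{L} = \card{D}$. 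Hence $\card{\Xx(A)} \le \card{D}$, so every class contains at most $\card{D}$ height-$1$ primes, and it remains to produce $\card{D}$ of them in each class.

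For the lower bound I would reduce to a statement about prime elements of $L$. Under the isomorphism $\cC(A) \cong \Z^t/\langle \mathbf a_i : i\in[1,n]\rangle$ of Theorem~\ref{Thm:CG}, the $j$-th standard basis vector maps to $[\prim_j]$; since each $\prim_j$ contains one of $x_1,\ldots,x_n$, we have $\sum_{i=1}^n a_{ij} = \val_{\prim_j}(x_1\cdots x_n) \ge 1$ for all $j$, so $\sum_i \mathbf a_i \in \Z_{\ge 1}^t$ and every coset of $\langle\mathbf a_i\rangle$ meets $\Z_{\ge 0}^t$. Given a class $c$, choose $(e_1,\ldots,e_t)\in\Z_{\ge 0}^t$ with $\sum_j e_j[\prim_j] = -c$. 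Then it suffices to exhibit $\card{D}$ pairwise non-associate prime elements $q$ of $L$ with $\val_{\prim_j}(q) = e_j$ for every $j$: indeed $qA = \mathfrak Q_q \cdot_v \vprod_j \prim_j^{e_j}$, where $\mathfrak Q_q := qL\cap A$ is a height-$1$ prime disjoint from $S$, so $[\mathfrak Q_q] = -\sum_j e_j[\prim_j] = c$; and distinct non-associate $q$ give distinct $\mathfrak Q_q$ because $\mathfrak P\mapsto S^{-1}\mathfrak P$ is injective on primes disjoint from $S$.

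To produce such primes I would start, via the approximation property, from $a \in A^\bullet$ with $\val_{\prim_j}(a) = e_j$ for all $j$ and $\val_{\mathfrak P}(a)\ge 0$ at every other height-$1$ prime $\mathfrak P$, and perturb it by a high power of a monomial. Choose $N$ so large that $m := (x_1\cdots x_n)^N$ has $\val_{\prim_j}(m) = N\sum_i a_{ij} > e_j$ for all $j$ and so large that all exponents of $m$ strictly exceed those occurring in the Laurent expansion of $a$; then $\val_{\prim_j}(a + \delta m) = e_j$ for all $j$ and all $\delta\in D^\bullet$ by the ultrametric inequality, and $a + \delta m \ne 0$. As $m$ is a unit of $L$, up to a monomial factor of $L$ independent of $\delta$ the element $a+\delta m$ is a polynomial $b + \delta m'\in D[x_1,\ldots,x_n]$ with $b$ fixed, $m'$ a fixed monomial sitting at a vertex of the Newton polytope and not in the support of $b$, and no $x_i$ dividing $b + \delta m'$; thus $a+\delta m$ is a prime element of $L$ precisely when $b + \delta m'$ is irreducible in $D[x_1,\ldots,x_n]$, and distinct irreducible specializations automatically give non-associate primes since they differ in the coefficient at the vertex monomial $m'$. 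The remaining point is that $b + \delta m'$ is irreducible for $\card{D}$ values of $\delta$, and here the hypothesis enters: if $n\ge 2$ one works over the infinite domain $D[x_2^{\pm 1},\ldots,x_n^{\pm 1}]$ and, after a monomial automorphism of $L$ in the spirit of Lemma~\ref{lemma:poly}, reduces to a one-variable specialization/irreducibility question that a Bertini-- or Hilbert-irreducibility-type argument settles for all but $<\card{D}$ values of $\delta$; if $n=1$ one uses instead that $D$ itself has $\ge\card{D}$ non-associate prime elements to run an Eisenstein argument directly. In either case one obtains $\card{D}$ prime elements of $L$ with the required valuations, and the reduction above finishes the proof.

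The main obstacle is precisely this last verification — forcing the perturbed element to be a prime element of $L$, and for as many as $\card{D}$ parameter values. A naive monomial perturbation of a fixed prime need not stay prime over a small (say algebraically closed) ground ring, since it can split into several prime factors; the structural hypothesis ($n\ge 2$, or an abundance of prime elements in $D$) is exactly what is needed to guarantee enough room, and converting the heuristic ``a generic perturbation is irreducible'' into a statement valid for $\card{D}$ parameters — whether by an explicit Eisenstein/linearization argument or by a Bertini--Hilbert-irreducibility argument combined with the approximation property — is the technical heart of the proof.
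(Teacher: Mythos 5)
Your overall architecture matches the paper's: bound $\card{\Xx(A)}$ above by $\card{D}$, show every class has the form $-\sum_j e_j[\prim_j]$ with all $e_j \ge 0$ (your observation that $\sum_i \mathbf{a}_i \in \Z_{\ge 1}^t$ is a clean way to get this; the paper eliminates positive coefficients one at a time), and then produce $\card{D}$ non-associate prime elements of $L = D[x_1^{\pm 1},\ldots,x_n^{\pm 1}]$ with prescribed $\prim_j$-adic valuations. The gap is exactly where you locate it, and it is not a removable technicality of your construction: the additive pencil $a + \delta (x_1\cdots x_n)^N$ has no reason to have irreducible generic member, because such a pencil can be \emph{composite}. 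Concretely, for the trivial class one may take all $e_j=0$ and $a=1$; your prescription then allows $N=2$, and over $D=\mathbb{C}$ (a legitimate choice of factorial subring, with $n\ge 2$) the element $1+\delta x_1^2x_2^2$ factors as $(1-\zeta x_1x_2)(1+\zeta x_1x_2)$ with $\zeta^2=-\delta$ for \emph{every} $\delta$. Bertini--Krull explicitly excepts composite pencils, and Hilbert irreducibility is a theorem about special fields, not arbitrary infinite factorial domains $D$; moreover, over a non-field $D$ one would additionally have to control content to pass from irreducibility over $\mathbf{q}(D)$ to primality in $L$. So the step you flag as the "technical heart" is asserted rather than proved, and the assertion fails for natural instances.

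The paper's device sidesteps genericity entirely by varying a prime rather than a coefficient. Writing $a = \sum_{i=r}^s x_1^i a_i$ with $a_i \in D[x_2^{\pm 1},\ldots,x_n^{\pm 1}]$, it chooses a prime element $p$ of $D[x_2,\ldots,x_n]$ with $p \notin \prim_j$ for all $j$ and $p \nmid a_r$, and sets $b = (x_1\cdots x_n)^N + pa$. Then $x_1^{-r}b$ is Eisenstein at $p$ as a polynomial in $x_1$ over $D[x_2^{\pm 1},\ldots,x_n^{\pm 1}]$: the leading coefficient $(x_2\cdots x_n)^N$ is a unit, all lower coefficients are divisible by $p$, and the constant term $pa_r$ is not divisible by $p^2$. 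Hence $b$ is prime in $L$, still has $\val_{\prim_j}(b)=e_j$, and the hypothesis "$n\ge 2$, or $n=1$ and $D$ has at least $\card{D}$ height-$1$ primes" is used precisely to guarantee $\card{D}$ pairwise non-associated admissible choices of $p$. To repair your version you would need an actual proof that $\card{D}$ values of $\delta$ give irreducible (indeed prime, and pairwise non-associate) elements; as written, that is the missing ingredient.
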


\begin{proof}
  We first show the following claim.
 
  \begin{claim}[A]
    For every $j \in [1,t]$, let $e_j \in \Z_{\ge 0}$.
    Then there exist at least $\card{D}$ height-$1$ prime ideals $\qrim \subseteq A$ with $[\qrim] = \sum_{j=1}^t -e_j [\prim_j]$.
\end{claim}

\begin{proof}
  By the approximation property there exists an element $a \in A$ such that $\val_{\prim_j}(a) = e_j$ for all $j \in [1,t]$.

  Write
  \[
    a = \sum_{i=r}^s x_1^i a_i
  \]
  with $r \le s$, with $a_i \in D[x_2^{\pm 1}, \ldots, x_n^{\pm 1}]$, with $a_r \ne 0$, and with $a_s \ne 0$.
  Let $p \in D[x_2,\ldots, x_n]$ be a prime element with $p \not\in \prim_j$ for all $j \in [1,t]$ and such that $p$ does not divide $a_r$ in $D[x_2^{\pm 1}, \ldots, x_n^{\pm 1}]$.
  By our assumptions on $D$ and $n$, there exist at least $\card{D}$ pairwise non-associated such prime elements.
  
  Choose $N \in \Z_{>0}$ such that $N > e_j$ for all $j \in [1,t]$ and $N > s$.
  Consider
  \[
    b = (x_1 \cdots x_n)^N + p a \in A.
  \]
  Then $\val_{\prim_j}(b) = \val_{\prim_j}(a) = e_j$ for all $j \in [1,t]$.
  By the Eisenstein criterion, $x_1^{-r} b$ is irreducible as a polynomial in $x_1$ over $D[x_2^{\pm 1}, \ldots, x_n^{\pm 1}]$.
  Since its leading coefficient, $(x_2 \cdots x_n)^N$, is a unit in $D[x_2^{\pm 1}, \ldots, x_n^{\pm 1}]$, it also does not have any non-unit constant factor.
  We conclude that $b$ is a prime element of $D[x_1^{\pm 1}, \ldots, x_n^{\pm 1}] = A[x_1^{-1},\ldots, x_n^{-1}]$.

  Thus, by Proposition~\ref{p-locprime},
  \[
    bA = \mathfrak q \cdot_v \vprod_{j=1}^t \prim_j^{e_j}
  \]
  with a height-$1$ prime ideal $\mathfrak q$ of $A$ that does not contain any of $x_1$, $\ldots\,$,~$x_n$, and $[\mathfrak q] = - \sum_{j=1}^t e_j [\mathfrak p_j]$.
  Varying the choice of $p$, we find at least $\card{D}$ different such ideals.
  \renewcommand{\qedsymbol}{\ensuremath{\square} (Claim \textbf{A})}
  \end{proof}

  Let now $g \in \mathcal C(A)$ be an arbitrary class.
  Then
  \begin{equation} \label{eq:gclass}
    g = \sum_{j=1}^t c_j \mathfrak [\prim_j] \quad \text{with $c_j \in \Z$ for $j \in [1,t]$}.
  \end{equation}
  Considering some $x_l A$ with $x_l \in \prim_j$, we have
  \[
    [\prim_j] = a_{lj} [\prim_j] -(a_{lj}-1) [\prim_j] = \sum_{\substack{k =1 \\ k\ne j}}^t - a_{lk} [\prim_k] - (a_{lj} - 1) [\prim_j].
  \]
  Thus, if $c_j > 0$, we can replace $[\prim_j]$ by a negative linear combination of $[\prim_k]$'s.
  Hence, without restriction,  we can assume $c_j \le 0$ for all $j \in [1,t]$ in Equation~\eqref{eq:gclass}.
  Therefore there exist at least $\card{D}$ height-$1$ prime ideals $\qrim$ in $A$ with $[\qrim] = g$ by Claim~\textbf{A}.

  Noting that $\card{A} = \card{D[x_1,\ldots,x_n]} = \card{D}$, and that in a Krull domain every height-$1$ prime ideal is finitely generated as a divisorial ideal, it is clear that $A$ has at most $\card{D}$ height-$1$ prime ideals.
\end{proof}

\subsection{Class groups for cluster algebras}\label{subsection class groups cluster algebras}

In this subsection we apply the previous results in the particular case where the Krull domain $A$ is also a cluster algebra.

\begin{lemm} \label{Lemma:IndicatorPrimes}
  Let $\Sigma=(\x,\y,B)$ be a seed with $\x=(x_1, \ldots, x_n)$, and $A= A(\Sigma)$.
  Suppose that $A$ is a Krull domain.
  Then, for each $i \in [1,n]$, there exists a height-$1$ prime ideal $\prim \subseteq A$ such that $\val_{\prim}(x_i) = 1$ and $\val_{\prim}(x_j) = 0$ for $j \in [1,n] \setminus \{i\}$.
\end{lemm}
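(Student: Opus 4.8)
The plan is to mutate in direction $i$ and then pass to a localization in which $x_i$ becomes associate to its exchange polynomial. Write $\Sigma' = \mu_i(\Sigma) = (\x',\y,B')$, so that the $i$-th entry of the cluster of $\Sigma$ is replaced by $x_i' = f_i/x_i$, and recall that $A(\Sigma') = A(\Sigma) = A$. Let $S'$ be the multiplicative subset of $A^\bullet$ generated by $x_i'$ together with the $x_j$ for $j \in [1,n]\setminus\{i\}$. My first step is to check that $(S')^{-1}A = K[\,x_j^{\pm 1}\ (j \ne i),\ (x_i')^{\pm 1},\ y^{\pm 1}\ (y \in \y)\,]$ is a Laurent polynomial ring over $K$, and hence factorial: the inclusion ``$\subseteq$'' is the Laurent phenomenon applied to the seed $\Sigma'$, while ``$\supseteq$'' holds because each displayed generator lies in $(S')^{-1}A$ and is a unit there --- the $x_j$ and $x_i'$ since they lie in $S'$, the frozen variables since they are invertible in $A$ by our standing assumption.

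Next I would analyze $x_i$ inside $R := (S')^{-1}A$. Since $x_i x_i' = f_i$ and $x_i'$ is a unit of $R$, the element $x_i$ is associate in $R$ to the exchange polynomial $f_i$, which involves only the variables $x_j$ ($j \ne i$) and $\y$. By Proposition~\ref{prop:exchpoly}\ref{prop:exchpoly:rep}, $f_i$ has no repeated irreducible factors in $K[\x,\y]$, hence (being of $x_i$-degree $0$) also none in $K[\,x_j\ (j\ne i),\ y\ (y\in\y)\,]$; and since $R$ arises from this polynomial ring by inverting the variables and adjoining the invertible indeterminate $x_i'$, and since $f_i$ --- a sum of two coprime monomials --- has no monomial factor, each irreducible factor of $f_i$ stays a non-unit prime of $R$ with no repetitions appearing. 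Moreover $f_i$ is a non-unit of $R$: writing $f_i = g + h$ with $g$, $h$ coprime monomials in the invertible variables, $f_i$ is associate in $R$ to the binomial $1 + h/g$, which is not a unit; in the remaining case $K = \Z$ with $i$ isolated, $f_i = 2$ is again a non-unit of $R$. Thus $x_i R = \pi_1 \cdots \pi_r R$ with $r \ge 1$ and pairwise non-associated prime elements $\pi_1, \ldots, \pi_r$ of $R$.

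Finally I would transport this back to $A$. For each $s \in [1,r]$ put $\qrim_s = \pi_s R \cap A$; by Proposition~\ref{p-locprime} this is a height-$1$ prime of $A$ with $\qrim_s \cap S' = \emptyset$, so $x_j \notin \qrim_s$ and hence $\val_{\qrim_s}(x_j) = 0$ for every $j \in [1,n]\setminus\{i\}$. Using the description of the extension of divisorial ideals to a localization recalled before Theorem~\ref{thm:nagata}, one has $(S')^{-1}(x_i A) = \vprod_{\prim \cap S' = \emptyset} \big((S')^{-1}\prim\big)^{\val_\prim(x_i)}$, the product running over height-$1$ primes of $A$ disjoint from $S'$; the left-hand side equals $x_i R = \pi_1 \cdots \pi_r R$. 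Comparing exponents in the free abelian group $\cF_v(R)^\times$, whose basis $\Xx(R)$ is in bijection with $\{\,\prim \in \Xx(A) \mid \prim \cap S' = \emptyset\,\}$ via $\prim \mapsto (S')^{-1}\prim$, yields $\val_{\qrim_s}(x_i) = 1$ for every $s$. Taking $\prim = \qrim_1$ then proves the lemma.

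The step I expect to require the most care is the last one: verifying that localizing at $S'$ produces exactly the asserted Laurent polynomial ring, and then propagating the $\prim$-adic valuations correctly through the localization bijection so as to obtain $\val_{\qrim_s}(x_i) = 1$ on the nose --- this is precisely where the absence of repeated factors in exchange polynomials (Proposition~\ref{prop:exchpoly}\ref{prop:exchpoly:rep}) is indispensable, since a repeated factor would force the exponent to exceed $1$.
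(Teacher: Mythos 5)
Your proposal is correct and follows essentially the same route as the paper: mutate in direction $i$, localize at the mutated cluster to land in a Laurent polynomial ring where $x_i$ becomes associate to $f_i$, contract a prime factor of $f_i$ back to $A$, and use the absence of repeated factors in $f_i$ (Proposition~\ref{prop:exchpoly}) together with the localization correspondence to get $\val_\prim(x_i)=1$ and $\val_\prim(x_j)=0$. The paper's version is just more compact, picking a single prime factor $p$ and setting $\prim = pA'\cap A$ directly rather than first factoring $x_iR$ completely.
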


\begin{proof}
  We mutate $\Sigma$ in direction $i$ to obtain a new seed $\x'$ in which $x_i$ is replaced by $x_i'$.
  Since $x_i$ is not contained in the support of the exchange polynomial $f_i$, we have $f_i \in K[\x',\y]$.
  Let $A' = A[u^{-1} \ \vert \ u \in \x' \cup \y]$.
  Let $p \in K[\x',\y]$ be a prime factor of $f_i$ and set $\prim = pA' \cap A$.
  By Krull's Principal Ideal Theorem, the prime ideal $p A'$ has height $1$, and therefore so does $\prim$.
  For $j \in [1,n] \setminus \{i\}$ we have $x_j \in (A')^\times$, and therefore $x_j \not \in \prim$.
  Hence $\val_\prim(x_j) = 0$.
  Because $f_i$ does not have any repeated factors, we have $\val_{\prim}(x_i) = \val_{\prim}(f_i) = 1$.
\end{proof}

We can now prove the first main theorem of the paper.

\begin{proof}[Proof of Theorem~\ref{Thm:MainGeneral}] \label{Proof:MainGeneral}
  We apply Theorem~\ref{Thm:CG} to the cluster algebra $A$ and the set $\{x_1,\ldots, x_{n+m}\}$.
  In this situation $A[x_1^{-1}, \ldots, x_n^{-1}]=A[x_1^{-1}, \ldots, x_{n+m}^{-1}]$ is equal to the Laurent polynomial ring $K[x_1^{\pm 1},\ldots, x_{n+m}^{\pm 1}]$, which is factorial.
    Thus, we know that $\mathcal C(A)$ is generated by $t$ elements, with $n$ relations stemming from $x_1$, $\ldots\,$,~$x_n$ (the frozen variables $x_{n+1}$, $\ldots\,$,~$x_{n+m}$ are units in $A$ and hence not contained in any prime ideal; the relations they give are trivial).
    Let $\mathbf a=(a_{ij})_{i \in [1,n], j \in [1,t]}$ denote the matrix of relations, as in Theorem~\ref{Thm:CG}, omitting the zero rows obtained from the frozen variables.
    
  By Lemma~\ref{Lemma:IndicatorPrimes}, for each row $i\in [1,n]$ of $\mathbf a$ we can find an index $j \in [1,t]$ with $a_{ij}=1$ and $a_{kj}=0$ for $k \in [1,n] \setminus \{i\}$.
  This implies that we can use the $i$-th row to eliminate the $j$-th generator.
  The additional property that $a_{kj}=0$ for $k \in [1,n] \setminus \{i\}$, implies that any other cluster variable will allow us to eliminate a different generator.
  Thus, we find that $\mathcal C(A)$ is a free abelian group of rank $t-n$.

  If $n+m \ge 2$, or $n+m = 1$ and $K=\Z$, we can apply Theorem~\ref{Thm:PrimeDistribution} to obtain that every class contains exactly $\card{K}$ height-$1$ prime ideals.
  Suppose that $n+m=1$ and that $K$ is a field.
  By our standing assumption, there is no isolated exchangeable index, so that necessarily $n=0$ and $m=1$.
  But then $A=K[x_1^{\pm 1}]$ is a Laurent polynomial ring, $\cC(A) = 0$, and $A$ contains $\card{K}$ pairwise non-associated prime elements.
\end{proof}

For many cases, the result that every class of $\cC(A)$ contains infinitely many height-$1$ prime ideals (but not the exact cardinality) can also be deduced from a more general theorem by Kainrath; see \cite[Theorem 3]{K1}.

\begin{rema} \label{Rem:Iso}
  Suppose that, in addition to the assumptions of the previous theorem,  $A$ is a finitely generated algebra with a known presentation.
  Then there are algorithms to compute the primary decompositions of the ideals $x_1A$, $\ldots\,$,~$x_nA$.
  The associated primes are the height-$1$ prime ideals containing one of the $x_i$.

  Thus, in this case, the previous theorem immediately gives rise to an algorithm for computing the rank of the class group.
  A presentation of the class group, with generators the classes of height-$1$ primes containing one of the $x_i$, can be obtained in the same way, as the entire coefficient matrix $(a_{ij})$ appearing in Theorem~\ref{Thm:CG} can be obtained from the primary decomposition.
\end{rema}

\begin{rema}
  A Krull monoid $H$ is characterized, up to isomorphism, by its unit group $H^\times$, its class group $\cC(H)$, and a family of cardinalities $(m_g)_{g \in \cC(A)}$, where $m_g$ is the cardinality of height-$1$ prime ideals of $H$ in the class $g$.

  Suppose that $A$ is a cluster algebra that is a Krull domain.
  Then our results imply that the multiplicative monoid $A^\bullet$ is determined up to isomorphism by the class group $\cC(A)$, the base ring $K$, and the number of frozen variables $m$.

  In \cite{GLS}, Geiss--Leclerc--Schr\"{o}er show a stronger result for factorial cluster algebras, implying that many factorial cluster algebras are in fact polynomial rings.
\end{rema}

The factorization theory of a Krull domain $A$ is essentially determined by its class group $G=\cC(A)$ and the subset $G_0 = \{\, g \in \cC(A) \ \vert\  \text{$g$ contains a height-$1$ prime ideal} \,\}$.
In particular any question about sets of lengths is completely determined by these two invariants.
We refer to \cite{G} for a recent survey.
If $G_0=G$ is infinite, all arithmetical invariants of $A$ are known to be infinite as well, whereas for factorial domains all these invariants are trivial.
Thus Theorem~\ref{Thm:MainGeneral} exhibits a strong dichotomy between the factorization theory of factorial and non-factorial cluster algebra (as long as the latter is still a Krull domain).

We mention one particular striking result.
Let $a \in A^\bullet$.
We call $k \ge 0$ a \emph{length} of $a$ if there exist atoms $u_1$, $\ldots\,$,~$u_k \in A^\bullet$ such that $a=u_1\cdots u_k$.
The \emph{set of lengths} of $a$, denoted by $\mathsf L(a)$, is defined as the set of all such lengths; we set $\mathsf L(a)=\{0\}$ for $a \in A^\times$.
Then $\mathsf L(a) = \{0\}$ if and only if $a$ is a unit, and $\mathsf L(a) = \{1\}$ if and only if $a$ is an atom.
In any other case $0$,~$1 \not \in \mathsf L(a)$.
Recall that in a Krull domain $\mathsf L(a)$ is always a finite set.
Clearly, if $A$ is factorial, then $\mathsf L(a)$ is a singleton for each $a \in A^\bullet$.
By contrast, a result of Kainrath from \cite{K2}, together with Theorem~\ref{Thm:MainGeneral}, immediately implies the following.

\begin{coro} \label{coro:lengths}
  Let $A$ be a cluster algebra that is a Krull domain and suppose that $A$ is not factorial.
  Let $L \subseteq \Z_{\ge 2}$ be a finite set.
  Then there exists an element $a \in A^\bullet$ such that $\mathsf L(a) = L$.
\end{coro}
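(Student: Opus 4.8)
The plan is to reduce Corollary~\ref{coro:lengths} to the cited realization theorem of Kainrath~\cite{K2}, which states that if $H$ is a Krull monoid whose class group $G$ is infinite and for which $G_0 = G$ (that is, every class contains a height-$1$ prime divisor), then for every finite set $L \subseteq \Z_{\ge 2}$ there is an element of $H$ with set of lengths exactly $L$. Since $A^\bullet$ is a Krull monoid with $\cC(A^\bullet) = \cC(A)$, it suffices to verify the two hypotheses of Kainrath's theorem for the cluster algebra $A$ under the standing assumption that $A$ is a Krull domain but not factorial.

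First I would record that by Theorem~\ref{Thm:MainGeneral} the class group $\cC(A)$ is a finitely generated free abelian group of rank $t-n$. Since $A$ is not factorial, the equivalence \ref*{fact:fact}$\Leftrightarrow$\ref*{fact:krull} in the factoriality theorem forces $\cC(A)$ to be nontrivial; a nontrivial free abelian group $\Z^{t-n}$ with $t - n \ge 1$ is infinite, so $G = \cC(A)$ is infinite, as required.

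Second, I would invoke the last sentence of Theorem~\ref{Thm:MainGeneral}: since $A$ is not factorial it is in particular not equal to $K$, so $n + m > 0$ and every class of $\cC(A)$ contains exactly $\card{K}$ height-$1$ prime ideals. In particular $G_0 = G$, since $\card{K} \ge 1$ (indeed $K$ is an infinite field of characteristic $0$, or $K = \Z$). With both hypotheses of Kainrath's theorem in place, we conclude that for the prescribed finite set $L \subseteq \Z_{\ge 2}$ there exists $a \in A^\bullet$ with $\mathsf L(a) = L$.

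I do not anticipate a genuine obstacle here: this corollary is a direct application of a black-box realization result, and all the work has already been done in Theorem~\ref{Thm:MainGeneral}. The only mild point of care is the translation between the monoid-theoretic language of~\cite{K2} and the domain-theoretic statement --- namely that $A^\bullet$ is a Krull monoid whose class group and set of classes containing prime divisors agree with $\cC(A)$ and $G_0$ as defined for the domain $A$ --- which is exactly the monoid/domain dictionary already referenced in Subsection~\ref{section commutative} (see \cite[Chapter 2.10]{GHK}).
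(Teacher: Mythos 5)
Your proposal is correct and follows exactly the route the paper intends: the paper gives no separate proof, stating only that Kainrath's realization theorem together with Theorem~\ref{Thm:MainGeneral} "immediately implies" the corollary, and your write-up simply makes explicit the two hypothesis checks ($\cC(A)$ nontrivial free abelian, hence infinite, and $G_0=G$ since every class contains $\card{K}$ height-$1$ primes). Nothing is missing.
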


A further refinement of this result, that also deals with the multiplicities of the lengths appearing, can be found in \cite[Theorem 7.4.1]{GHK}.

\subsection{\texorpdfstring{$F$}{F}-polynomials}\label{subsection F-poly}
Suppose that the seed $\Sigma=(\x,\y,B)$ has principal coefficients, and let $A=A(\Sigma)$.
For the remainder of this section, we will suppose that $A(\Sigma)$ \emph{is factorial}.
We will see later that this assumption is always fulfilled when $B$ is acyclic, see Corollary~\ref{coro:principal-factorial}.

Cluster algebras were widely studied given their interplay with quiver representations and $2$-Calabi--Yau categories. The connection is given by the Caldero--Chapoton map, that sends objects in the $2$-CY category to elements in the cluster algebra. In that setting, it is important to understand the behavior of $F$-polynomials.

\begin{defi} [$F$-polynomial]
  Let $A$ be a cluster algebra with principal coefficients, and $z \in A$ a non-initial cluster variable.
  The \emph{$F$-polynomial} associated to $z$, that we denote by $F_z$, is obtained from $z \in K[\x^{\pm 1},\y]$ by substituting $x_1=\ldots=x_n=1$.
\end{defi}

\begin{prop}
  Let $z\in A$ be a non-initial cluster variable.
  According to the Laurent phenomenon we may write $z=r/s$ with coprime $r$,~$s\in K[\x,\y]$ such that $s$ is a monomial.
  Then, the element $r\in K[\x,\y]$ is a prime element and not associated to any $x_i$ for $i \in [1,n+m]$.
\end{prop}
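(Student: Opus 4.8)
The plan is to carry out every divisibility argument inside the factorial domain $A$, where, by Corollary~\ref{Cor:ClusterFactorial}, $z$ and the initial variables $x_1,\dots,x_{n+m}$ are prime elements, and then transport the conclusions to $K[\x,\y]$, whose only units are the nonzero constants. By Theorem~\ref{Thm:ClusterAtoms} we also have $A^\times=K^\times\times\langle x_{n+1}^{\pm 1},\dots,x_{n+m}^{\pm 1}\rangle$. Since the exchange polynomials are polynomials in the frozen variables, the Laurent phenomenon gives $z\in K[\x^{\pm 1},\y]$, so $s$ is a monomial in $x_1,\dots,x_n$ only, say $s=x_1^{d_1}\cdots x_n^{d_n}$ with $d_i\ge 0$, and — since $r$, $s$, $z$ all lie in $A$ — the equation $r=z\cdot x_1^{d_1}\cdots x_n^{d_n}$ holds in $A$. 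The single external ingredient I would invoke is that the $F$-polynomial of $z$ has constant term $1$ (\cite{FZ4}); since $F_z$ is obtained from $z$ by setting $x_1=\dots=x_n=1$, this forces $z$ to have a monomial $\x^{\mathbf c}$ involving no frozen variable. Multiplying by the monomial $s$ causes no cancellation, so $r$ likewise has a monomial free of frozen variables, and hence no $y_j$ divides $r$ in $K[\x,\y]$.

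I would next record that $z$ is not associated in $A$ to any $x_i$ with $i\in[1,n]$: if $z=\varepsilon x_i$ with $\varepsilon\in A^\times$, then by the description of $A^\times$ the cluster variable $z$ is a constant multiple of a monomial in $x_i$ and the frozen variables, so $F_z$ is a constant multiple of a monomial in $\y$; as $F_z$ has constant term $1$, that monomial is trivial and the constant equals $1$, forcing $z=x_i$, contrary to $z$ being non-initial.

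The two assertions then follow. If some $x_i$ with $i\in[1,n]$ divided $r$ in $K[\x,\y]$, then $x_i\nmid s$ by coprimality of $r$ and $s$, so $s$ is a product of the primes $x_j$ with $j\ne i$, none associated to $x_i$; unique factorization in $A$ applied to $r=zs$ would then force $x_i\mid z$, i.e.\ $z$ associated to $x_i$ — contradicting the previous paragraph. Together with the first paragraph this shows no variable divides $r$, so $r$ is associated to none of $x_1,\dots,x_{n+m}$. For primality, suppose $r=r_1r_2$ with $r_1,r_2\in K[\x,\y]$ non-units. In $A$ we have $r_1r_2=zs$; since $z$ is prime in $A$ we may assume $z\mid r_1$, and cancelling $z$ (in the domain $A$) gives $a r_2=s$ for some $a\in A$. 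Thus $r_2$ divides the product of primes $s$ in the factorial domain $A$, so $r_2=\varepsilon\,x_1^{e_1}\cdots x_n^{e_n}$ with $\varepsilon\in A^\times$ and $0\le e_i\le d_i$; as $r_2\in K[\x,\y]$ and $A^\times=K^\times\times\langle x_{n+1}^{\pm1},\dots,x_{n+m}^{\pm1}\rangle$, this makes $r_2$ a monomial $\lambda\,\y^{\mathbf f}x_1^{e_1}\cdots x_n^{e_n}$ in $K[\x,\y]$ with $\lambda\in K^\times$ and $\mathbf f\ge 0$. Coprimality of $r$ and $s$, together with $e_i\le d_i$, forces every $e_i=0$, so $r_2=\lambda\,\y^{\mathbf f}$; being a non-unit of $K[\x,\y]$, it is divisible by some $y_j$, hence so is $r$, contradicting the first paragraph. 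Therefore $r$ is irreducible, and hence prime, in the unique factorization domain $K[\x,\y]$.

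The main obstacle is isolating and justifying the one outside input — that the $F$-polynomial of a non-initial cluster variable has constant term $1$, equivalently that $z$ involves a monomial in the exchangeable variables alone — which is precisely where deeper structural results for cluster algebras with principal coefficients enter. Everything else is careful bookkeeping with unique factorization, arranged so that each delicate divisibility is read off inside the factorial domain $A$ and pulled back to $K[\x,\y]$ using only that its units are the constants.
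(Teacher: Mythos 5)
Your argument is essentially correct, but it takes a genuinely different route from the paper's, and the comparison is worth recording. For primality, the paper transfers the primality of $z$ (Corollary~\ref{Cor:ClusterFactorial}) down the chain of localizations $A \to K[\x^{\pm 1},\y^{\pm 1}] \to K[\x^{\pm 1},\y] \to K[\x,\y]$, with the divisibility of $r$ by some $x_i$ as the only obstruction to descend the last step, and rules that out exactly as you do; you instead take an arbitrary factorization $r=r_1r_2$ and grind it down inside the factorial domain $A$ using the primality of $z$ and of the $x_i$ together with the unit group from Theorem~\ref{Thm:ClusterAtoms}. Both arguments rest on the same pillars, and yours has the merit of making every localization step explicit. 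For the non-association statement the paper simply cites \cite[Corollary 2.3 (ii)]{GLS} (two distinct cluster variables are never associates), whereas you replace this by the assertion that $F_z$ has constant term $1$. That assertion is true, but be careful with the attribution: in \cite{FZ4} it is Conjecture~5.4, established there only in special cases and proved in general much later (Derksen--Weyman--Zelevinsky in the skew-symmetric case, Gross--Hacking--Keel--Kontsevich in general). If you want to stay within the references the paper actually uses, note that \cite[Proposition 5.2]{FZ4} (no $y_j$ divides $F_z$), which the paper invokes in the theorem immediately following this proposition, already yields everything you need except the final normalization $\lambda=1$ in your second paragraph; for that one step the citation of \cite[Corollary 2.3 (ii)]{GLS} is the cheaper fix. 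Two cosmetic points: you should say a word on why $r$ is a non-unit of $K[\x,\y]$ before concluding irreducibility (if $r\in K^{\times}$ then $z^{-1}=s/r\in A$, contradicting that $z$ is an atom), and the justification that $s$ involves only exchangeable variables is the refined Laurent phenomenon rather than the shape of the exchange polynomials.
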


\begin{proof}
  Recall first that, by \cite[Corollary 2.3 (ii)]{GLS} two different cluster variables $z_1$,~$z_2\in A$ are not associates.
  This implies in particular that $r$ (and $z$) are not associated to any of $x_1$, $\ldots\,$,~$x_{n+m}$.
  We have that $z$ is a prime element in $A$ by Corollary \ref{Cor:ClusterFactorial}. By Proposition \ref{p-locprime}, the element $z$ remains prime in the localization $K[\x^{\pm 1},\y^{\pm 1}]$ hence it is prime in $K[\x^{\pm 1},\y]$. As $s$ is a unit in $K[\x^{\pm 1},\y]$, the element $r$ is prime in $K[\x^{\pm 1},\y]$.
  We conclude that $r$ is prime in $K[\x,\y]$ unless it is divisible by some $x_i$ with $i\in[1,n]$.
  
  Let $i \in [1,n]$.
  If $x_i$ divides $s$, then it does not divide $r$ in $K[\x,\y]$ by coprimality.
  Suppose now that $x_i$ does not divide $s$, but that it does divide $r$ in $K[\x,\y]$.
  Then $x_i$ divides $r=zs$ also in $A$.
  Since $s$ is a monomial in $x_1$, $\ldots\,$,~$x_n$, and the initial cluster variables are prime elements in $A$, we conclude that $x_i$ also does not divide $s$ in $A$.
  Since $x_i$ is prime in $A$, then $x_i$ divides $z$ in $A$, and hence $x_i$ and $z$ are associates.
  However, then $x_i=z$, contradicting that $z$ is a non-initial cluster variable.
  We conclude that $x_i$ does not divide $r$.
\end{proof}

\begin{theo} Let $A=A(\Sigma)$ be a factorial cluster algebra with principal coefficients. Let $z$ be a non-initial cluster variable and let $F_z$ be the associated $F$-polynomial. Then $F_z$ is a prime element of $K[\y]$.
\end{theo}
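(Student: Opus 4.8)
The plan is to push primality from the numerator of $z$ in $K[\x,\y]$, produced in the previous proposition, down to $F_z$ in $K[\y]$, exploiting that cluster variables with principal coefficients admit a $g$-vector expansion. First I would unwind definitions: by the preceding proposition $z=r/s$ with $r\in K[\x,\y]$ a prime element not associated to any $x_i$ and $s$ a monomial, and since $z\in K[\x^{\pm1},\y]$ with $\gcd(r,s)=1$ the monomial $s$ cannot involve a frozen variable, so $s\in K[x_1,\dots,x_n]$ and $F_z=r\big|_{x_1=\dots=x_n=1}=\phi(r)$ (up to a unit of $K$), where $\phi\colon K[\x,\y]\to K[\y]$ specialises $x_i\mapsto 1$ for $i\in[1,n]$ and fixes $x_j$ for $j\in[n+1,2n]$. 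Next I would invoke the standard expansion of a cluster variable with principal coefficients (Fomin--Zelevinsky, \cite{FZ4}): $z=x^{g}\,F_z(\hat y_1,\dots,\hat y_n)$, where $g\in\Z^n$ is the $g$-vector of $z$, each $\hat y_j=x_{n+j}\prod_{i=1}^{n}x_i^{b_{ij}}$ is $x_{n+j}$ times a Laurent monomial in $x_1,\dots,x_n$, and $F_z$ has constant term $1$ (which is why no tropical denominator appears). Writing $F_z=\sum_\beta c_\beta\,x_{n+1}^{\beta_1}\cdots x_{2n}^{\beta_n}$, $\;s=x_1^{\sigma_1}\cdots x_n^{\sigma_n}$, $\;x^{v}=x_1^{v_1}\cdots x_n^{v_n}$, and $B_0$ for the principal part of $B$, substitution gives the closed form
\[
  r \;=\; sz \;=\; \sum_{\beta}\, c_\beta\; x^{\,\sigma+g+B_0\beta}\; x_{n+1}^{\beta_1}\cdots x_{2n}^{\beta_n}.
\]
The essential structural point is that in $r$ the exponent vector in $x_1,\dots,x_n$ of each monomial is an affine function of its exponent vector in $x_{n+1},\dots,x_{2n}$; hence $\phi$ is injective on polynomials of this shape, and $r$ is recovered from $F_z=\phi(r)$ by reattaching to each monomial with $x_{n+1},\dots,x_{2n}$-exponent $\beta$ the factor $x^{\sigma+g+B_0\beta}$.

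Now suppose, toward a contradiction, that $F_z=PQ$ with $P,Q\in K[\y]$ non-constant. For $a,b\in\Z^n$ with $a+b=\sigma+g$, I would define $R$ and $S$ by reattaching to each monomial of $P$ with exponent $\gamma$ (resp.\ of $Q$ with exponent $\delta$) the factor $x^{a+B_0\gamma}$ (resp.\ $x^{b+B_0\delta}$). A routine expansion then gives $RS=r$, \emph{provided} $R,S\in K[\x,\y]$, i.e.\ provided $a+B_0\gamma\ge 0$ on $\operatorname{supp}(P)$ and $b+B_0\delta\ge 0$ on $\operatorname{supp}(Q)$. I expect the production of such $a,b$ to be the main obstacle; it reduces to the single coordinatewise inequality
\[
  \sigma+g+B_0(\gamma+\delta)\;\ge\;0\qquad\text{for all }\gamma\in\operatorname{supp}(P),\ \delta\in\operatorname{supp}(Q),
\]
which I would establish by convexity: the Newton polytope of a product over a domain is the Minkowski sum of the Newton polytopes of the factors, so $\operatorname{supp}(P)+\operatorname{supp}(Q)$ lies in the convex hull of $\operatorname{supp}(F_z)$; and that hull is contained in the polyhedron $\{\beta:\sigma+g+B_0\beta\ge 0\}$, since this polyhedron is convex and contains $\operatorname{supp}(F_z)$ (because $r$ is a genuine polynomial). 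Thus a valid choice of $a,b$ exists.

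Granting the lift $r=RS$ in $K[\x,\y]$ and using that $r$ is prime, one of $R,S$ — say $R$ — must be a unit $c\in K^{\times}$; then $P=\phi(R)=c$, contradicting that $P$ is non-constant. So $F_z$ has no factorisation into two non-constant polynomials. Finally, $F_z$ is itself non-constant: otherwise $r=sz=c\,x^{\sigma+g}$ would be a single monomial, which can be a prime element of $K[\x,\y]$ only if it is associated to some $x_i$, contrary to the proposition. Therefore $F_z$ is irreducible, hence prime, in the factorial ring $K[\y]$.
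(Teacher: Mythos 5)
Your argument is correct in substance but follows a genuinely different route from the paper. The paper stays in Laurent polynomial rings: it first invokes Fomin--Zelevinsky's result that $F_z$ is not divisible by a monomial to reduce primality in $K[\y]$ to primality in $K[\y^{\pm1}]$, then applies the monomial automorphism $\alpha$ of $K[\x^{\pm1},\y^{\pm1}]$ sending $x_{n+j}\mapsto \widehat y_j$, so that primality of $F_z$ becomes primality of $F_z(\widehat y_1,\dots,\widehat y_n)=zM$, which holds because the prime element $z\in A$ stays prime in the localization $K[\x^{\pm1},\y^{\pm1}]$. You instead work entirely in the polynomial ring $K[\x,\y]$, lifting a putative factorization $F_z=PQ$ to a factorization $r=RS$ of the prime numerator $r$ from the preceding proposition; the price is that you must show the lift can be arranged to land in $K[\x,\y]$ rather than only in the Laurent ring, which you handle with the Newton-polytope/Minkowski-sum convexity argument. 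That argument is valid (the polyhedron $\{\beta:\sigma+g+B_0\beta\ge 0\}$ is convex and contains $\operatorname{supp}(F_z)$ because $r$ is a polynomial, and $N(PQ)=N(P)+N(Q)$ over a domain), and the coordinatewise splitting of $\sigma+g$ into $a+b$ works as you claim. In effect you trade the paper's localize-and-descend step for an explicit integrality argument; both hinge on the same inputs ($z$, equivalently $r$, is prime, and $z=x^gF_z(\widehat\y)$ up to a Laurent monomial).

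One small gap: over $K=\Z$ your contradiction only excludes factorizations of $F_z$ into two \emph{non-constant} factors, which does not yet give irreducibility in $\Z[\y]$ (compare $2(y+1)$, which is reducible but not a product of two non-constants). You should also rule out a non-unit constant factor, either by noting that $F_z$ has constant term $1$ and is therefore primitive, or by running your lifting argument with $P=c$ constant and $a=0$, $b=\sigma+g$, which exhibits $r=c\cdot(r/c)$ as a product of two non-units and again contradicts primality of $r$. For $K$ a field this issue does not arise and your proof is complete as written.
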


\begin{proof}
  A result of Fomin--Zelevinsky, see \cite[Proposition 5.2]{FZ4}, asserts that $F_z \in K[\y]$ is not divisible by a monomial.
  Hence $F_z$ is a prime element in $K[\y]$ if and only if it is prime in $K[\y^{\pm 1}]$.
  To show the latter, we show that $F_z$ is a prime element in $K[\x^{\pm 1}, \y^{\pm 1}]$.

  We put $\widehat{y}_j=x_{j+n}\prod_{i=1}^n x_i^{b_{ij}}$ for $j\in[1,n]$.
  The algebra endomorphism $\alpha$ of $K[\x^{\pm 1}, \y^{\pm 1}]$ satisfying
  \[
    \alpha(x_i) = x_i \text{ for $i \in [1,n]$} \qquad\text{and}\qquad \alpha(x_{j+n}) = \widehat y_j \text{ for $j \in [1,n]$}
  \]
  is an automorphism with $\alpha^{-1}(x_{j+n}) = x_{j+n} \prod_{i=1}^n x_i^{-b_{ij}}$ for $j \in [1,n]$.
  Hence $F_z$ is prime if and only if $\alpha(F_z) = F(\widehat y_1, \ldots, \widehat y_n)$ is prime in $K[\x^{\pm 1}, \y^{\pm 1}]$.
  By \cite[Corollary 6.3]{FZ4}, there is a Laurent monomial $M\in K[\x^{\pm 1},\y^{\pm 1}]^{\times}$ such that $zM$ is equal to $F_z(\widehat{y}_1,\ldots,\widehat{y}_n)$. 
 As noted before, $z$ is prime in $A$ and it is not associated to any of $x_1$, $\ldots\,$, $x_{n+m}$. By Proposition \ref{p-locprime} the element $z$ remains prime in the localization $K[\x^{\pm 1},\y^{\pm 1}]$, and hence $zM$ is prime in $K[\x^{\pm 1},\y^{\pm 1}]$ as well.
\end{proof}

\section{Prime ideals in acyclic cluster algebras}
\label{sec:acyclic-prime}

Throughout this section let $\Sigma = (\x,\y,B)$ be an \emph{acyclic} seed with $\x = (x_1,\ldots, x_n)$ and $\y = (x_{n+1}, \ldots, x_{n+m} )$, and let $A = A(\Sigma)$ be the associated cluster algebra.
For every $x_i$ with $i \in [1,n]$, we denote by $x_i'$ the cluster variable obtained by mutation of $\Sigma$ in direction $i$. 

Recall that the assumption of acyclicity implies that $A$ is a noetherian Krull domain.
In this section we carry out the strategy suggested by Theorem~\ref{Thm:MainGeneral}, and explicitly determine the height-$1$ prime ideals containing one of the exchangeable variables $x_1$, $\ldots\,$,~$x_n$.
The notions of partners and $p$-partners, introduced in Section~\ref{subsection partners}, play a crucial role in this.
In our main result of this section, Theorem~\ref{Thm:Decompositions}, we determine the factorization of $x_iA$ as a divisorial product of height-$1$ prime ideals.

\begin{lemm}[Muller]
\label{Lemma:Muller}
If two $i$,~$j\in [1,n]$ are neighbors, then the corresponding cluster variables $x_i$  and $x_j$ do not lie in a common prime ideal $\prim \subseteq A$.
\end{lemm}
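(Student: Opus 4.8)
The plan is to localize at a carefully chosen cluster so that $x_i$ and $x_j$ become units, and then argue that a prime ideal containing both must be the whole ring — which is impossible. Since $i$ and $j$ are neighbors, they are in particular not partners (by the preceding lemma, or rather: neighbors are not partners, hence not mutation-commuting in the relevant sense). The key structural fact to exploit is Theorem~\ref{Thm:BFZ}: for an acyclic seed, inverting a subset of the initial cluster variables yields a cluster algebra obtained by freezing the corresponding variables, and this is again acyclic and a Krull domain.

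First I would mutate $\Sigma$ at $j$ to obtain a seed $\Sigma' = \mu_j(\Sigma)$ in which $x_j$ is replaced by $x_j'$, while $x_i$ is still a cluster variable of $\Sigma'$ (since $x_i$ is an initial variable of $\Sigma$ and mutation in direction $j \ne i$ keeps $x_i$ in the cluster). Now $x_i$ and $x_j$ both appear as exchangeable variables in the \emph{ambient field} but I want a single seed whose cluster contains both $x_i$ and $x_j$; that is already the case for $\Sigma$ itself. The real point is this: in the presentation of Theorem~\ref{Thm:BFZ} applied to $\Sigma$, we have $A = K[X_i, X_i', X_k^{\pm 1}]/\langle X_k X_k' - f_k\rangle$, and the exchange polynomial $f_i$ does not involve $x_i$, while $f_j$ does not involve $x_j$. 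Since $i$ and $j$ are neighbors, say $j \in N_-(i)$, the variable $x_j$ divides the monomial $\prod_{k \in N_-(i)} x_k^{b_{ki}}$ appearing in $f_i$; thus in the ring $A/\prim$, if $x_j \in \prim$, then one of the two monomial terms of $f_i$ vanishes, forcing the other term (a monomial in the \emph{other} neighbors of $i$) to equal $x_i x_i' \bmod \prim$.

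The cleanest route: assume $x_i, x_j \in \prim$ for some prime $\prim \subsetneq A$. Localize $A$ at the multiplicative set generated by all initial cluster variables \emph{except} $x_i$ and $x_j$ together with all frozen variables; by Theorem~\ref{Thm:BFZ} this localization $A'$ is the cluster algebra of an acyclic seed in which only $i$ and $j$ remain exchangeable, with all other former-exchangeable indices now frozen and invertible. Passing $\prim$ through Proposition~\ref{p-locprime} (this is legitimate since $x_i \in \prim$ means $\prim$ cannot meet the multiplicative set — wait, I must instead localize away from variables \emph{not} in $\prim$; so localize at the multiplicative set $S$ generated by the invertible frozen variables and those initial exchangeable $x_k$ with $x_k \notin \prim$). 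Since $i,j$ are neighbors, at least one of $x_i, x_j$ — say $x_j \in N_-(i)$ — contributes to the first monomial of $f_i = (\text{monomial in } N_-(i)) + (\text{monomial in } N_+(i))$. Modulo $\prim$ in $A'$, the first monomial is $0$ (as $x_j \equiv 0$), so $x_i x_i' \equiv (\text{monomial in } N_+(i))$. But $i \notin N_+(i)$ (no loops in $\Gamma(B)$) and $j \notin N_+(i)$ (since $\Gamma(B)$ is $2$-acyclic, $j \in N_-(i)$ precludes $j \in N_+(i)$); hence that monomial is a product of variables each of which is invertible in $A'$, so it is a unit. Thus $x_i x_i'$ is a unit modulo $\prim$, contradicting $x_i \in \prim$.

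The main obstacle I anticipate is bookkeeping around the two cases $j \in N_-(i)$ versus $j \in N_+(i)$ and verifying that after the chosen localization the relevant monomial in $f_i$ really consists only of variables made invertible — this uses that the only exchangeable indices surviving in $A'$ are $i$ and $j$, that $\Gamma(B)$ has no loops and no $2$-cycles, and that neighbors are never partners so that $i$ does not secretly reappear. The argument is symmetric in swapping the roles, and I would phrase it once, noting that by possibly reversing the roles of $i$ and $j$ and of $N_-$ and $N_+$ we may assume $j \in N_-(i)$. An alternative, perhaps slicker, is to invoke Lemma~\ref{Lemma:IndicatorPrimes}-style reasoning together with the explicit BFZ presentation directly, but the localization argument above seems most robust.
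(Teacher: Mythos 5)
There is a genuine gap at the final step. After reducing to $x_ix_i'\equiv\prod_{t\in N_+(i)}x_t^{b_{ti}}\pmod{\prim}$, you assert that this monomial is a unit in the localization $A'$ because ``the only exchangeable indices surviving in $A'$ are $i$ and $j$.'' But you defined $S$ to be generated by the frozen variables and those exchangeable $x_k$ with $x_k\notin\prim$, so the non-invertible exchangeable variables in $A'$ are exactly those lying in $\prim$ --- and nothing you have said rules out that some further successor $t\in N_+(i)$ also has $x_t\in\prim$. Checking that $i\notin N_+(i)$ and $j\notin N_+(i)$ only excludes those two specific variables from the monomial. If some $x_t\in\prim$ with $t\in N_+(i)$, the monomial lies in $\prim$ and is not a unit, and your contradiction evaporates. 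What your computation actually proves is only the correct first step: $\prod_{t\in N_+(i)}x_t^{b_{ti}}\in\prim$, hence (since $\prim$ is proper and frozen variables are units) $N_+(i)\neq\emptyset$ and some \emph{exchangeable} $t\in N_+(i)$ satisfies $x_t\in\prim$. At that point you are facing a new arrow $i\to t$ with both endpoints' variables in $\prim$, and you must iterate.

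This is precisely how the paper's proof (due to Muller) proceeds: it considers a path $i_1\to i_2\to\cdots\to i_r$ in the exchangeable part of $\Gamma(B)$ with all $x_{i_s}\in\prim$, takes it of \emph{maximal length} --- which exists only because the seed is acyclic, so such paths cannot revisit vertices --- and derives the contradiction at the endpoint, where the step you carried out produces a strictly longer path. Note that acyclicity is not a technical convenience here: for the Markov quiver the ideal $A_1$ generated by all cluster variables is prime and contains the variables of every pair of neighbors (Section~\ref{sec:markov}), so the statement is false without acyclicity, and any proof must use it where yours does not. Your framing via Theorem~\ref{Thm:BFZ} and localization is not needed either --- the exchange relation $x_ix_i'=f_i$ already holds in $A$, so one can work directly with membership in $\prim$; the localization only served to support the unjustified unit claim.
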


\begin{proof}
  The lemma and the proof we present are due to Muller \cite[Lemma 5.3]{M}.
  We reproduce the proof for the convenience of the reader. Suppose that there is a prime ideal $\prim\subseteq A$ that contains two exchangeable variables $x_i$ and $x_j$ such that there is an arrow $i\to j$ in $\Gamma(B)$.
  Let $i_1=i$ and $i_2=j$, and consider paths $i_1\to i_2\to\ldots\to i_r$ in $\Gamma(B)$ such that $i_s$ is exchangeable and $x_{i_s}\in\prim$ for all $s\in[1,r]$.
  Since $\Sigma$ is acyclic, there exists such a path of maximal length $r$.
  By assumption $r\geq 2$.
  We have
\[
\prod_{t\in N_{+}(i_r)}x_t^{b_{ti_r}}=x_{i_r}x_{i_r}'-\prod_{t\in N_{-}(i_r)}x_t^{-b_{ti_r}}\in \prim
\]
because $x_{i_{r-1}}$,~$x_{i_r}\in\prim$ and $i_{r-1}\in N_{-}(i_r)$. Since $\prim$ does not contain $1$, we can deduce that $N_{+}(i_r)$ is non-empty. Since $\prim$ is prime, there is a $t\in N_{+}(i_r)$ with $x_t\in\prim$. This $t$ cannot be frozen, because otherwise $x_t x_t^{-1}=1\in\prim$. Hence it is exchangeable, which contradicts the maximality of the chosen path.
\end{proof}

Nevertheless cluster variables corresponding to non-neighboring vertices can lie in a common prime ideal, as the next example shows.

\begin{exam}
  Let $Q=(1\rightarrow 2\leftarrow 3)$ be of type $A_3$ and let $\x=(x_1,x_2,x_3)$.
  According to Theorem~\ref{Thm:BFZ} the cluster algebra $A$ of $Q$ is generated by $x_i$,~$x_i'$ with $i\in[1,3]$ subject to relations $x_1x_1'=x_2+1=x_3x_3'$ and $x_2x_2'=1+x_1x_3$. Then the ideal $\prim=\langle x_1,x_3\rangle \subseteq A$ is prime because $A/\prim\cong K[x_1',x_2,x_2',x_3']/\langle x_2+1,x_2x_2' -1\rangle \cong K[x_1',x_3']$ is a domain.
\end{exam}

We shall be concerned with the following ideals.
Note that if $p$,~$q \in K[\x,\y]$ are prime elements that are not associated to any of $x_1$, $\ldots\,$,~$x_{n+m}$, then $p$ and $q$ are associates in $K[\x,\y]$ if and only if they are associates in $K[\x^{\pm 1},\y^{\pm 1}]$ if and only if they are associates in $A$.
Thus,  $\ptns{p}=\ptns{q}$ for the set of $p$-partners if $pA=qA$.
Hence the notation $\prim_{pA}(I)$ in the following definition makes sense.

\begin{defi}[Prime ideals]
  \mbox{}
  \begin{enumerate}
    \item For every partner set $\ps \subseteq [1,n]$ and every subset $I\subseteq \ps$, let
      \begin{align*}
        A_{\overline{I}}=A\big[(x_i')^{-1},x_j^{-1} \ \vert \ i\in I,\, j\in [1,n]\setminus I\big].
      \end{align*}
    \item 
      For every prime element $p \in K[\x,\y]$ that is not associated to any of $x_1$, $\ldots\,$,~$x_{n+m}$, and every subset $I \subseteq \ptns{p}$ of the set of $p$-partners, define
      \[
        \PP{p}{I} = \PP{pK[\x,\y]}{I} = \PP{pA}{I} = A \cap p A_{\overline I}.
      \]
    \end{enumerate}
\end{defi}

By Lemma \ref{Lemma:MutatedPartners}, the set $\{\, x_i', x_j \ \vert \  i\in I,\, j\in [1,n+m] \setminus I \,\}$ is a cluster of $A$, so the ring obtained by localizing with respect to the multiplicative set generated by the cluster, that is $A_{\overline I}$, is a Laurent polynomial ring.

Let us summarize some basic properties of the ideals $\PP{p}{I}$.

\begin{prop}
  \label{Prop:PartnerIdeals}
  Let $p$, $q \in K[\x,\y]$ be prime elements not associated to any of $x_1$, $\ldots\,$,~$x_{n+m}$, and let $I \subseteq \ptns{p}$ and $J \subseteq \ptns{q}$.
  \begin{enumerate}
  \item  \label{PartnerIdeals:Prime} The ideal $\PP{p}{I}$ is a height-$1$ prime ideal of $A$.
  \item \label{PartnerIdeals:Cont}The prime ideal $\PP{p}{I}$ contains
    \[
      \{\, p, x_i,  x_j' \mid i \in I, j \in \ptns{p} \setminus I \,\}.
    \]
  \item \label{PartnerIdeals:Equal} We have $\PP{p}{I}=\PP{q}{J}$ if and only if $pA=qA$ and $I=J$.
  \end{enumerate}
\end{prop}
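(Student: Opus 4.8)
The plan is to work throughout with the identification $A_{\overline I}=S_I^{-1}A$, where $S_I$ is the multiplicative set generated by the cluster $C_I=\{\,x_i',x_j\mid i\in I,\ j\in[1,n+m]\setminus I\,\}$; as recorded after the definition, $A_{\overline I}$ is then a Laurent polynomial ring over $K$, hence a UFD and a Krull domain. For \ref{PartnerIdeals:Prime} the crux is to show that $p$ remains a \emph{prime element} of $A_{\overline I}$. If $I=\emptyset$ this is clear: $A_{\overline\emptyset}=K[\x^{\pm1},\y^{\pm1}]$ by the Laurent phenomenon, and $p$, being prime in $K[\x,\y]$ and associated to no variable, stays prime there. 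If $I\neq\emptyset$, pick $i_0\in\ptns{p}$; then $p\mid f_{i_0}$, so by item~\ref{lemma:poly:rep} of Lemma~\ref{lemma:poly} the element $p$ lies in $K[\,x_k\mid k\in N(i_0)\,]$. Since all $p$-partners are pairwise partners and partners are never neighbours, $N(i_0)$ contains no element of $\ptns{p}$, in particular none of $I$; hence the variables $x_k$ with $k\in N(i_0)$ occur unchanged among the defining variables of $A_{\overline I}=K[C_I^{\pm1}]$. Thus $K[\,x_k\mid k\in N(i_0)\,]$ is a polynomial subring of $A_{\overline I}$ in which $p$ is still irreducible; adjoining the remaining $C_I$-variables keeps $p$ prime, and inverting all $C_I$-variables keeps it prime too, because $p$ (involving only the $x_k$, $k\in N(i_0)$) is associated to none of them. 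Hence $pA_{\overline I}$ is a nonzero principal prime of the UFD $A_{\overline I}$, of height~$1$, and it misses $S_I$ since $p$ is a non-unit; Proposition~\ref{p-locprime} then gives that $\PP{p}{I}=A\cap pA_{\overline I}$ is a prime ideal of $A$ of height~$1$, proving \ref{PartnerIdeals:Prime}.

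For \ref{PartnerIdeals:Cont} I would argue directly: $p\in K[\x,\y]\subseteq A$ and $p\in pA_{\overline I}$, so $p\in\PP{p}{I}$; for $i\in I$ the variable $x_i'$ is a unit of $A_{\overline I}$ and $x_ix_i'=f_i$ with $p\mid f_i$ (as $i\in\ptns{p}$), whence $x_i=f_i\,(x_i')^{-1}\in pA_{\overline I}$ and $x_i\in A$, so $x_i\in\PP{p}{I}$; symmetrically, for $j\in\ptns{p}\setminus I$ the variable $x_j$ is a unit of $A_{\overline I}$, $x_jx_j'=f_j$ with $p\mid f_j$, and $x_j'$ is a cluster variable of $A$, so $x_j'=f_j\,x_j^{-1}\in pA_{\overline I}\cap A=\PP{p}{I}$.

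For \ref{PartnerIdeals:Equal}, the implication ``$\Leftarrow$'' is immediate: if $pA=qA$ then $\ptns{p}=\ptns{q}$ and $p,q$ are associates in $A$, hence in $A_{\overline I}=A_{\overline J}$, so $\PP{p}{I}=\PP{q}{J}$ once $I=J$. For ``$\Rightarrow$'' set $\prim=\PP{p}{I}=\PP{q}{J}$. Since $\prim$ is the contraction of $pA_{\overline I}$ it meets $S_I$ trivially, so $x_j\notin\prim$ for every $j\in[1,n]\setminus I$; combined with \ref{PartnerIdeals:Cont} applied to $\PP{q}{J}$, which gives $x_i\in\prim$ for all $i\in J$, this forces $J\subseteq I$, and symmetrically $I\subseteq J$, so $I=J$. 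Next, $p\in\prim$ yields $pA_{\overline I}\subseteq\prim A_{\overline I}\subseteq pA_{\overline I}$, hence $pA_{\overline I}=\prim A_{\overline I}=qA_{\overline I}$, and so $p=uq$ with $u\in A_{\overline I}^\times=K^\times\times\langle\,z\mid z\in C_I\,\rangle$. Passing to the group of divisors $K(\x,\y)^\times/K^\times$ of the UFD $K[\x,\y]$ and writing $[p]-[q]=\sum_{z\in C_I}m_z[z]$: for fixed $i\in I$ the prime $[x_i]$ has coefficient $0$ on the left (as neither $p$ nor $q$ is associated to a variable) and coefficient $-m_{x_i'}$ on the right, since $x_i$ divides no $f_{i'}$ with $i'\in I$ ($i\notin N(i')$ for partners $i,i'$); hence $m_{x_i'}=0$ for every $i\in I$, and then comparing the coefficient of $[p]$ forces $[p]=[q]$, i.e.\ $pA=qA$.

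The step I expect to be the main obstacle is this last divisor-group computation — together, to a lesser degree, with establishing primality rather than mere irreducibility of $p$ in $A_{\overline I}$ in part~\ref{PartnerIdeals:Prime}. In both one must carefully separate divisibility in $K[\x,\y]$ from divisibility in the Laurent ring $A_{\overline I}$, exploiting that $x_i'=f_i/x_i$ is a reduced fraction introducing the prime $x_i$ in its denominator and that $x_i$ divides the exchange polynomial of no partner of $i$, so that an a~priori relation $p=uq$ with $u$ a ``cluster monomial'' over $C_I$ collapses to a scalar relation $p=\lambda q$ in $K[\x,\y]$.
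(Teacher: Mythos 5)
Your proof is correct and follows the paper's approach essentially verbatim: contract the principal prime $pA_{\overline I}$ of the Laurent polynomial ring $A_{\overline I}$ back to $A$, use that partners are never neighbours, and separate the two localizations $A_{\overline I}$ and $A_{\overline J}$ to force $I=J$. The only difference is that you spell out steps the paper compresses --- notably the verification that $p$ stays prime in $A_{\overline I}$ and the divisor-group computation showing that the unit $u$ with $p=uq$ must lie in $K^\times$, which the paper dispatches with ``since neither of them is divisible by a monomial.''
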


\begin{proof}
\ref*{PartnerIdeals:Prime}
  Since $p$ is a prime element in $K[\x,\y]$ and $p$ is not divided by any of $x_1$, $\ldots\,$,~$x_{n+m}$ by assumption, the element $p$ is also a prime element of $A_{\overline{I}}$.
  By Krull's Principal Ideal Theorem, the prime ideal $p A_{\overline I}$ has height $1$.
  Since $A_{\overline I}$ is a localization of $A$, therefore the same is true for the contraction $\PP{p}{I} = A \cap pA_{\overline I}$.

\ref*{PartnerIdeals:Cont}
  That $p \in \PP{p}{I}$ is clear from the definitions.

  Let $i\in I$.
  Since $p$ divides the exchange polynomial $f_i$, we have
  \[
    x_ix_i' =f_i \in pA \subseteq \PP{p}{I},
  \]
  and hence $x_i \in \PP{p}{I}$ or $x_i' \in \PP{p}{I}$.
  The latter is impossible, since $x_i' \in A_{\overline I}^\times$ while $p \not \in A_{\overline I}^\times$, hence $x_i \in \PP{p}{I}$.
  
  For $j \in \ptns{p}\setminus I$ we argue analogously $x_j' \in \PP{p}{I}$, since $x_j \in A_{\overline I}^\times$.

\ref*{PartnerIdeals:Equal}
  If $pA=qA$ and $I=J$, then trivially $\PP{p}{I}=\PP{q}{J}$.

  Suppose now $\PP{p}{I} = \PP{q}{J}$.
  We have $q \in \PP{p}{I}$ and hence $q \in pA_{\overline I}$.
  Since $p$ and $q$ are prime elements in $A_{\overline I}$, we conclude that they are associated.
  Since neither of them is divisible by a monomial, we have $p = q \lambda$ with $\lambda \in K^\times$.
  In particular, $pA=qA$.

  Assume that $I \neq J$.
  Then there exists an element $j\in J\setminus I$ or an element $i\in I\setminus J$.
  By symmetry, we may without loss of generality assume that there is an element $j\in J\setminus I$.
  Then $x_j' \in \PP{p}{I}$ and $x_j' \in A_{\overline J}^\times$.
  From this we can deduce that $\PP{p}{I} A_{\overline{J}}=A_{\overline{J}}$.
  On the other hand we have $\PP{p}{J} A_{\overline{J}}=p A_{\overline{J}}\neq A_{\overline{J}}$ because $p$ is not a unit in $A_{\overline{J}}$.
  It follows that $\PP{p}{I} \neq \PP{q}{J}$.
  \qedhere
\end{proof}

The brunt of the work now lies in showing that the prime ideals just introduced are indeed precisely the height-$1$ prime ideals containing some exchangeable variable $x_1$, $\ldots\,$,~$x_n$.
\begin{lemm}
  \label{Lemma:PrimesAndClusterVars}
  Let $i \in [1,n]$.
  Every height-$1$ prime ideal of $A$ containing $x_i$ is equal to some $\PP{p}{I}$, where $p \in K[\x,\y]$ is a prime element dividing the exchange polynomial $f_i$, and $I \subseteq \ptns{p}$ is a subset with $i \in I$.
\end{lemm}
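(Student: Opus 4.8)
The strategy is to localize at a suitable cluster, so that $\prim$ becomes principal with an explicitly controlled generator, and then contract back to $A$. Set $I = \{\, j \in [1,n] \mid x_j \in \prim \,\}$; since $x_i \in \prim$ we have $i \in I$. By Lemma~\ref{Lemma:Muller} no two elements of $I$ are neighbors in $\Gamma(B)$, so—by the commutation of mutations at non-neighboring indices used in the proof of Lemma~\ref{Lemma:MutatedPartners}—the set $\{\, x_j', x_l \mid j \in I,\ l \in [1,n+m]\setminus I\,\}$ is a cluster of $A$. Consequently $A_{\overline I}$ is a Laurent polynomial ring over $K$, in particular a UFD; write $R_0 \subseteq A_{\overline I}$ for the Laurent subring generated by the $x_l^{\pm 1}$ with $l \in [1,n+m]\setminus I$, so that $A_{\overline I} = R_0[\,(x_j')^{\pm 1} \mid j \in I\,]$.

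A key preliminary observation is that \emph{no height-$1$ prime ideal of $A$ contains both $x_j$ and $x_j'$}, for any $j \in [1,n]$. Indeed, in the Berenstein--Fomin--Zelevinsky presentation of $A$ (Theorem~\ref{Thm:BFZ}) the indeterminate $X_j'$ occurs in no defining relation once $X_j$ is set to $0$; hence $A/\langle x_j\rangle$ is a polynomial ring in the variable $x_j'$ over $A/\langle x_j, x_j'\rangle$. Thus the image of $x_j'$ lies in no minimal prime of $A/\langle x_j\rangle$, and since $\langle x_j\rangle$ has height $1$ (Krull's principal ideal theorem), the ideal $\langle x_j, x_j'\rangle$ has height at least $2$ in $A$ and so cannot be contained in the height-$1$ prime $\prim$. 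In particular $x_j' \notin \prim$ for every $j \in I$.

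Together with the definition of $I$ and the invertibility of the frozen variables, this shows that $\prim$ is disjoint from the multiplicative set generated by the cluster $\{\, x_j', x_l \mid j \in I,\ l \in [1,n+m]\setminus I\,\}$. Hence $\prim$ survives in $A_{\overline I}$, where it becomes a height-$1$ prime of the UFD $A_{\overline I}$, so that $\prim A_{\overline I} = p A_{\overline I}$ for some prime element $p$ and $\prim = A \cap p A_{\overline I}$. For each $j \in I$ we have $f_j = x_j x_j' \in \prim$, whence $p \mid f_j$ in $A_{\overline I}$. Since $I$ is an independent set, $f_j$ only involves variables $x_k$ with $k \in N(j) \subseteq [1,n+m]\setminus I$, all of which lie in $R_0$; thus $f_j \in R_0$. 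Comparing degrees in the variables $x_j'$ ($j \in I$) in a factorization $f_j = p r_j$ in $A_{\overline I}$, we conclude that—after multiplying $p$ by a monomial unit—we may take $p \in R_0$, and then (clearing monomial denominators) that $p$ is a prime element of $K[\x,\y]$, not associated to any of $x_1,\ldots,x_{n+m}$, still satisfying $p \mid f_j$ in $K[\x,\y]$ for all $j \in I$. In particular $I \subseteq \ptns{p}$ and $p \mid f_i$.

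With $p$ and $I$ chosen in this way, $\prim = A \cap p A_{\overline I} = \PP{p}{I}$ by the very definition of the latter, and $i \in I \subseteq \ptns{p}$, which is the assertion. (If $K = \Z$ and $i$ is an isolated index, then $f_i = 2$, the argument produces $p = 2$, and $\ptns{2}$ is the set of isolated indices, so this case is covered.) I expect the main point to be the height computation for $\langle x_j, x_j'\rangle$—it is precisely what prevents a height-$1$ prime from straddling the mutation at $j$—and, secondarily, making the degree comparison sharp enough to place the generator $p$ inside $K[\x,\y]$ rather than merely inside the localization $A_{\overline I}$.
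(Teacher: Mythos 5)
Your proof is correct, but it takes a genuinely different route from the paper's. The paper argues by induction on the number of pairwise non-associated prime divisors of the exchange polynomials: after freezing the neighbors of $i$ via Lemma~\ref{Lemma:Muller}, it picks a prime factor $p$ of $f_i$ inside $\prim$, forms the explicit ideal $\langle x_j, x_k', p \mid j \in I,\, k \in \ptns{p}\setminus I\rangle_A$, and proves this ideal is prime by writing $A$ as an extension of the smaller cluster algebra $A'$ obtained by deleting all $p$-partners and showing, via the induction hypothesis, that $p$ stays prime in $A'$. You instead work top-down from $\prim$: you set $I=\{\,j \mid x_j \in \prim\,\}$, establish the auxiliary fact that $\langle x_j, x_j'\rangle_A$ has height at least $2$ --- a clean consequence of the Berenstein--Fomin--Zelevinsky presentation that the paper never isolates, and which guarantees $x_j' \notin \prim$ for $j \in I$ --- so that $\prim$ survives in the Laurent polynomial ring $A_{\overline I}$, where you read off its principal prime generator; your degree argument then correctly normalizes $p$ into $K[\x,\y]$ and yields $I \subseteq \ptns{p}$. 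This avoids the induction and the verification that an explicitly presented ideal is prime, at the cost of not producing the generating set that the paper later refines in Proposition~\ref{Prop:PrimeGens}. Two points you gloss over but which do go through: the set $\{\,x_j', x_l \mid j \in I,\, l \notin I\,\}$ is a cluster because mutation at one element of the independent set $I$ leaves the exchange polynomials (and hence the mutated variables) at the other elements unchanged; and in the isolated-vertex case over $K=\Z$ the degree argument is still meaningful since $f_j=2$ is a non-unit of $R_0$.
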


\begin{proof}
  We prove the claim by induction on the number of pairwise non-associated prime elements of $K[\x,\y]$ that divide one of the exchange polynomials $f_1$, $\ldots\,$,~$f_n$.
  That is, by induction on
  \[
    c = \card{  \{\, \mathfrak P \in \Xx(K[\x,\y]) \ \vert\ f_i \in \mathfrak P \text{ for some $i \in [1,n]$} \,\} }.
  \]
  If $c=0$, then $n=0$ and there is nothing to show.
  Suppose from now on $c \ge 1$ and that the claim has been established for $c-1$.

  Without restriction we show the claim for $x_1$.
  By \ref{PartnerIdeals:Cont} of Proposition~\ref{Prop:PartnerIdeals}, each of the stated ideals $\PP{p}{I}$ contains $x_1$.
  It therefore suffices to show that $x_1$ is not contained in any other height-$1$ prime ideal.

  If $i$ is a neighbor of $1$, then $x_1$ and $x_i$ are not contained in a common prime ideal of $A$ by Lemma~\ref{Lemma:Muller}.
  Thus, if $S \subseteq A^\bullet$ denotes the multiplicative subset generated by $\{\, x_i \ \vert\  i \in N(1) \,\}$, it suffices to show that $x_1$ is not contained in any height-$1$ prime ideal of $S^{-1}A$, other than an ideal of the form $S^{-1} \PP{p}{I}$ with $p$ and $I$ as above.
  Since $A$ is acyclic, Theorem~\ref{Thm:BFZ} implies that $A$ is generated by $\{\, x_i, x_i', x_j^{\pm 1}\ \vert\  i \in [1,n], j \in [n+1,n+m] \,\}$ as a $K$-algebra.
Therefore, the algebra $S^{-1}A$ is isomorphic to the cluster algebra obtained by freezing the variables $x_i$ for $i \in N(1) \cap [1,n]$.
  Moreover $S^{-1}\PP{p}{I} = S^{-1}A \cap pA_{\overline I}$ by Proposition~\ref{p-locprime}.
  Replacing $A$ by $S^{-1}A$, we may without restriction assume that all neighbors of $1$ are frozen.
  This implies in particular that $f_i \in K[\y]$ for all partners $i$ of $1$.
  Moreover, if $i$ is a partner of $1$, then $x_i$ does not occur in any exchange polynomial of $\Sigma$.

  Let now $\prim \subseteq A$ be a height-$1$ prime ideal containing $x_1$.
  Then $\prim$ also contains the exchange polynomial $f_1=x_1x_1'$, and hence $\prim$ contains a prime element $p \in K[\y]$ dividing $f_1$.
  For each $i \in \ptns{p}$, we have that $p$ divides $x_ix_i'$ and hence $x_i \in \prim$ or $x_i' \in \prim$.
  Let $I = \{\, i \in \ptns{p} \ \vert\ x_i \in \prim \,\}$ and note $1 \in I$.
  Let $\mathfrak a = \langle x_i, x_j', p \ \vert\ i \in I, j \in \ptns{p} \setminus I \rangle_A$.
  Then $\mathfrak a \subseteq \prim$ by construction, and  $\mathfrak a \subseteq \PP{p}{I}$ by \ref{PartnerIdeals:Cont} of Proposition~\ref{Prop:PartnerIdeals}.
  We will show $\mathfrak a = \PP{p}{I}$.
  Then $\PP{p}{I} \subseteq \prim$, and, since $\PP{p}{I}$ and  $\mathfrak p$ are both height-$1$ prime ideals, we can conclude $\prim = \PP{p}{I}$.

  We may without restriction assume $\ptns{p} = [1,r]$ for $r \ge 1$.
  Consider the exchange matrix $B'$ obtained from $B$ by erasing the first $r$ columns and $r$ rows.
  Note that the first $r$ rows of $B$ are in fact all $0$.
  Let $\x'=(x_{r+1},\ldots, x_n)$, let $\Sigma' = (\x', \y, B')$, and let $A'$ be the cluster algebra associated to $\Sigma'$.
  Again using that $A$ is generated by $\{\, x_i, x_i', x_j^{\pm 1}\ \vert\  i \in [1,n], j \in [n+1,n+m] \,\}$ as a $K$-algebra, as well as the analogous statement for $A'$, we see that $A'$ is a subalgebra of $A$ and
  \[
    A = A'\big[ x_i, x_i' \ \vert \ i \in [1,r] \big].
  \]
  Using the presentation of $A$ given by Berenstein--Fomin--Zelevinsky in Theorem~\ref{Thm:BFZ},
  \begin{equation} \label{Eq:Aext}
    A = A'\big[ x_i, x_i' \ \vert \ i \in [1,r] \big] \cong A'\big[X_i, X_i' \ \vert\ i \in [1,r] \big] / \langle X_i X_i' - f_i \ \vert\ i \in [1,r] \rangle,
  \end{equation}
  for indeterminates $X_1$, $\ldots\,$, $X_r$, $X_1'$, $\ldots\,$,~$X_r'$ that are algebraically independent over $A'$.
  
  As we have removed all $p$-partners, the element $p$ is not a divisor of an exchange polynomial of $A'$, and the induction hypothesis therefore applies to $A'$.

  \begin{claim}
    $p$ is a prime element in $A'$.
  \end{claim}

  \begin{proof}
    The prime element $p \in K[\y]$ is not a monomial, hence $p$ is a prime element in the Laurent polynomial ring $K[x_{r+1}^{\pm 1},\ldots, x_{n+m}^{\pm 1}] = A'[x_{r+1}^{-1}, \ldots, x_n^{-1}]$.
    Thus
    \[
      \qrim = pA'\big[x_{r+1}^{-1}, \ldots, x_n^{-1}]
    \]
    is a height-$1$ prime ideal in $A'[x_{r+1}^{-1}, \ldots, x_n^{-1}]$.
    Since $A'$ is a Krull domain, the ideal $pA'$ can be written as a divisorial product of height-$1$ primes of $A'$.
    Proposition~\ref{p-locprime} implies
    \[
      pA' = (\qrim \cap A') \cdot_v \vprod_{s=1}^t \qrim_s
    \]
    for some height-$1$ prime ideals $\qrim_s \subseteq A'$ (with $t \ge 0$ and $s \in [1,t]$) such that  $\qrim_s$ contains one of $x_{r+1}$, $\ldots\,$,~$x_n$.
    To show that $p$ is a prime element in $A'$, it will suffice to show that $t = 0$.

    Applying the induction hypothesis to $A'$, we see that any prime ideal $\qrim_s$ containing one of $x_{r+1}$, $\ldots\,$,~$x_n$ is of the form
    \[
      \qrim_s = A'  \cap q A'_{\overline J}
    \]
    for a prime element $q \in K[\x',\y]$ dividing one of the exchange polynomials of $\Sigma'$, and $\emptyset \ne J \subseteq \ptns{q}$, where the $q$-partners $\ptns{q}$ are considered in $A'$, that is $\ptns{q} \subseteq [r+1,n]$.
    Here,
    \[
      A'_{\overline J}  =A'\big[(x_j')^{-1},x_i^{-1}\ \vert \ j\in J,\, i \in [r+1,n] \setminus J \big].
    \]

    To show $p \not \in \fq_s$, it suffices to show $p \not \in q A'_{\overline J}$.
    Suppose to the contrary that $p \in qA'_{\overline J}$.
    Since $p \in K[\y]$ is a prime element in $A'_{\overline J}$, it follows that $p=\lambda q$ for some $\lambda \in (A'_{\overline J})^\times$.
    But the group of units is the multiplicative abelian group generated as
    \[
      (A'_{\overline J})^{\times}=\left\langle\mu,\, x_j',\,  x_i \ \vert \ \mu \in K^\times,\, j\in J,\, i\in [r+1,n+m]\setminus  J \right\rangle_{\Z}. 
    \]
    It follows that $\lambda \in K^\times$ and thus $p$ and $q$ are associated in $K[\x,\y]$.
    This is impossible, in contradiction to the assumption $p \in qA'_{\overline J}$.
    \renewcommand{\qedsymbol}{\ensuremath{\square} (Claim)}
  \end{proof}

  From Equation~\eqref{Eq:Aext} we see that
  \[
    A / \mathfrak a \cong (A'/pA')[ x_i', x_j \mid i \in I,\,  j \in \ptns{p} \setminus I ]
  \]
  is isomorphic to a polynomial ring over the domain $A'/pA'$.
  Thus, the ideal $\mathfrak a$ is a prime ideal in $A$.
  Since $\PP{p}{I}$ is a height-$1$ prime ideal containing $\mathfrak a$, we must have $\mathfrak a = \PP{p}{I}$.
\end{proof}

\begin{coro}
  If $i$,~$j \in [1,n]$ are not partners, then there does not exist a height-$1$ prime ideal $\prim\subseteq A$ such that $\{x_i,x_j\}\subseteq \prim$.
\end{coro}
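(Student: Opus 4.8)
The plan is to deduce this immediately from Lemma~\ref{Lemma:PrimesAndClusterVars} together with the uniqueness statement in Proposition~\ref{Prop:PartnerIdeals}. Suppose, for contradiction, that $\prim \subseteq A$ is a height-$1$ prime ideal with $x_i, x_j \in \prim$, where $i, j \in [1,n]$ are not partners.

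First I would apply Lemma~\ref{Lemma:PrimesAndClusterVars} to $x_i$: since $x_i \in \prim$, there are a prime element $p \in K[\x,\y]$ dividing $f_i$ and a subset $I \subseteq \ptns{p}$ with $i \in I$ such that $\prim = \PP{p}{I}$. Applying the same lemma to $x_j$ yields a second description $\prim = \PP{q}{J}$ with $q \in K[\x,\y]$ a prime element dividing $f_j$ and $j \in J \subseteq \ptns{q}$. By construction, neither $p$ nor $q$ is associated in $K[\x,\y]$ to any of $x_1, \ldots, x_{n+m}$, so \ref{PartnerIdeals:Equal} of Proposition~\ref{Prop:PartnerIdeals} applies to the equality $\PP{p}{I} = \PP{q}{J}$ and gives $pA = qA$ together with $I = J$.

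Finally, since the set of $p$-partners depends only on the principal ideal $pK[\x,\y]$, and $pA = qA$ forces $pK[\x,\y] = qK[\x,\y]$ for prime elements not associated to any variable, we get $\ptns{p} = \ptns{q}$, and hence $i, j \in I \subseteq \ptns{p}$. Thus $i$ and $j$ are both $p$-partners, and therefore partners (as noted right after Definition~\ref{defi:partners}), contradicting the hypothesis.

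I do not expect any serious obstacle here: the substantial work, namely the classification of the height-$1$ primes over the exchangeable variables, has already been carried out in Lemma~\ref{Lemma:PrimesAndClusterVars}, and this corollary is essentially a bookkeeping consequence of the injectivity recorded in Proposition~\ref{Prop:PartnerIdeals}. The only minor point to keep in mind is that Lemma~\ref{Lemma:PrimesAndClusterVars} genuinely applies to every exchangeable index, including the isolated case $f_i = 2$ that can occur when $K = \Z$.
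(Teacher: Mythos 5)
Your proof is correct and follows exactly the paper's route: the paper likewise deduces the corollary from Lemma~\ref{Lemma:PrimesAndClusterVars} combined with part \ref{PartnerIdeals:Equal} of Proposition~\ref{Prop:PartnerIdeals}, merely stating this in one line where you have spelled out the bookkeeping. No gaps.
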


\begin{proof}
  This follows from Lemma~\ref{Lemma:PrimesAndClusterVars} together with \ref{PartnerIdeals:Equal} of Proposition~\ref{Prop:PartnerIdeals}.
\end{proof}

In the proof of Lemma~\ref{Lemma:PrimesAndClusterVars}, if all exchange polynomials $f_i$ are contained in $K[\y]$, and thus in particular if there is only one partner set, the localization step becomes unnecessary.
In this case, we see $\PP{p}{I} =\langle x_i, x_j', p \ \vert \ i\in I,\, j\in \ptns{p}\backslash I\rangle_A$.
In general, when $\Sigma$ admits more than one partner set, we can show that $\PP{p}{I}$ is generated by the same set as a divisorial ideal.

\begin{prop} \label{Prop:PrimeGens}
  Let $p \in K[\x,\y]$ be a prime element dividing one of the exchange polynomials of the seed $\Sigma$.
  Let $\emptyset \ne I \subseteq \ptns{p}$.
  Then
  \begin{align*}
    \PP{p}{I}=\big(\langle x_i, x_j', p \ \vert \ i\in I,\, j\in \ptns{p}\setminus I \rangle_A\big)_v.
  \end{align*}
\end{prop}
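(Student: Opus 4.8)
The plan is to show that the divisorial ideal $\mathfrak{a}_v$, where $\mathfrak{a} := \langle x_i, x_j', p \mid i \in I,\ j \in \ptns{p}\setminus I\rangle_A$, has divisorial factorization consisting of the single prime $\PP{p}{I}$ with multiplicity $1$. One inclusion is immediate: by \ref{PartnerIdeals:Cont} of Proposition~\ref{Prop:PartnerIdeals} every listed generator lies in $\PP{p}{I}$, so $\mathfrak{a} \subseteq \PP{p}{I}$, and since $\PP{p}{I}$ is a height-$1$ prime it is divisorial, whence $\mathfrak{a}_v \subseteq \PP{p}{I}$. For the reverse inclusion I will verify two things: (i) $\PP{p}{I}$ is the \emph{only} height-$1$ prime of $A$ containing $\mathfrak{a}$, and (ii) $\val_{\PP{p}{I}}(\mathfrak{a}) = 1$. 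Granting these, the representation $\mathfrak{a}_v = \vprod_{\qrim \in \Xx(A)} \qrim^{\val_\qrim(\mathfrak{a})}$ collapses to $\PP{p}{I}$, which finishes the proof.

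For (i), suppose $\qrim \in \Xx(A)$ satisfies $\mathfrak{a} \subseteq \qrim$. Since $I \neq \emptyset$, pick $i_0 \in I$; then $x_{i_0} \in \qrim$, so Lemma~\ref{Lemma:PrimesAndClusterVars} gives $\qrim = \PP{q}{J}$ for a prime element $q \in K[\x,\y]$ dividing $f_{i_0}$ and a subset $J \subseteq \ptns{q}$ with $i_0 \in J$. Now $p \in \qrim \subseteq qA_{\overline{J}}$, and $p$, $q$ are both non-monomial prime elements of the Laurent polynomial ring $A_{\overline{J}}$, hence associated there and therefore associated in $K[\x,\y]$; thus $pA = qA$, $\ptns{p} = \ptns{q}$, and $\PP{q}{J} = \PP{p}{J}$. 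To get $J = I$: if $i \in I$ then $x_i \in \mathfrak{a} \subseteq \PP{p}{J}$, but were $i \notin J$ the variable $x_i$ would be a unit of $A_{\overline{J}}$ and hence could not lie in the proper ideal $\PP{p}{J} = A \cap pA_{\overline{J}}$; so $I \subseteq J$. Symmetrically, if $j \in \ptns{p}\setminus I$ then $x_j' \in \mathfrak{a} \subseteq \PP{p}{J}$, while $j \in J$ would make $x_j'$ a unit of $A_{\overline{J}}$; so $J \subseteq I$. Hence $\qrim = \PP{p}{I}$.

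For (ii), since $\mathfrak{a} \subseteq \PP{p}{I}$ and $\PP{p}{I}$ is the unique height-$1$ prime over $\mathfrak{a}$, we have $\mathfrak{a}_v = \PP{p}{I}^{a}$ with $a = \val_{\PP{p}{I}}(\mathfrak{a}) \ge 1$, and as $p \in \mathfrak{a}$ also $a \le \val_{\PP{p}{I}}(p)$. Localizing at the multiplicative set generated by the cluster that defines $A_{\overline{I}}$ preserves the valuation at every height-$1$ prime disjoint from it, and $\PP{p}{I}A_{\overline{I}} = pA_{\overline{I}}$ is a height-$1$ (hence, $A_{\overline{I}}$ being factorial, principal) prime ideal generated by the prime element $p$; so $\val_{\PP{p}{I}}(p) = \val_{pA_{\overline{I}}}(p) = 1$, forcing $a = 1$. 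The one step that requires genuine care is (i): after invoking Lemma~\ref{Lemma:PrimesAndClusterVars} one must pin down $J = I$ entirely from bookkeeping of which cluster variables are units in $A_{\overline{J}}$; the remaining points---identifying $p$ with $q$, the valuation count, and the stability of valuations under localization---are routine consequences of material already in the paper.
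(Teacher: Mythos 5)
Your proof is correct and follows essentially the same route as the paper: both arguments rest on Lemma~\ref{Lemma:PrimesAndClusterVars} to restrict which height-$1$ primes can contain the ideal, and on localization to the Laurent polynomial rings $A_{\overline J}$ to pin down the exponents. The only cosmetic difference is organizational — you first prove $\PP{p}{I}$ is the unique height-$1$ prime over the ideal and then bound the exponent by $\val_{\PP{p}{I}}(pA)=1$, whereas the paper writes down the general divisorial product allowed by the lemma and kills the extraneous coefficients by comparing localizations at each $A_{\overline J}$.
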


\begin{proof}
  Let $\mathfrak a = \big(\langle x_i, x_j', p \ \vert \ i\in I,\, j\in \ptns{p} \setminus I \rangle_A\big)_v$.
  Since $\mathfrak a$ is a divisorial ideal and $A$ is a Krull domain, the ideal $\mathfrak a$ is a divisorial product of height-1 prime ideals.
  Our assumption that $I \ne \emptyset$ implies that there exists an $i \in \ptns{p}$ such that $x_i \in \mathfrak a$.
  Denoting as always by $f_i$ the exchange polynomial of $x_i$, let $f_i=p_1 \cdots p_r$ with prime elements $p=p_1$, $p_2$, $\ldots\,$,~$p_r \in K[\x,\y]$.
  By \ref{prop:exchpoly:rep} of Proposition~\ref{prop:exchpoly}, these prime elements are pairwise non-associated.
  Lemma~\ref{Lemma:PrimesAndClusterVars} implies
  \[
    \mathfrak a = \vprod_{s=1}^r \vprod_{\emptyset \ne J \subseteq \ptns{p_s}} \PP{p_s}{J}^{k_{s,J}}\quad\text{with $k_{s,J} \ge 0$}.
  \]
  Localizing, we have
  \[
    \mathfrak a A_{\overline J} = \vprod_{\substack{s=1\\J \subseteq \ptns{p_s}}}^r     (\PP{p_s}{J} A_{\overline J})^{k_{s,J}}.
  \]
  for all $j \in [1,r]$ and $\emptyset \ne J \subseteq \ptns{p_j}$.

  On the other hand, from the definition of $\mathfrak a$ and $A_{\overline I}$, we see $\mathfrak a A_{\overline I} = \PP{p}{I} A_{\overline I}$ and $\mathfrak a A_{\overline J} = A_{\overline J}$ for $\emptyset \ne J \subseteq \ptns{p}$ with $I \ne J$.
  If $p_j$ is not associated to $p$ and $\emptyset \ne J \subseteq \ptns{p_j}$, then $p$ is a prime element in $A_{\overline J}$ with $p_jA_{\overline J} \ne pA_{\overline J}$.
  Hence $pA_{\overline J} \not\subseteq p_j A_{\overline J}$.
  Comparing coefficients, we find $k_{1,I}=1$ and $k_{j,J}=0$ for all other coefficients.
  Thus $\mathfrak a = \PP{p}{I}$.
\end{proof}

\begin{ques}
We know that $\{\, x_i, x_j', p \ \vert\ i \in I, j \in \ptns{p}\setminus I \,\}$ generates $\PP{p}{I}$ as a divisorial ideal, and that, if all exchange polynomials are contained in $K[\y]$, the same set generates $\PP{p}{I}$ as an ideal.
  Is it always the case that $\PP{p}{I}$ is generated by this set as an ideal?
\end{ques}

\begin{theo}
  \label{Thm:Decompositions}
    Let $\Sigma=(\x,\y,B)$ be an acyclic seed with $\x=(x_1,\ldots,x_n)$ and $\y=(x_{n+1},\ldots, x_{n+m})$, and let $A =A(\Sigma)$.
For every exchangeable index $i\in [1,n]$ the following equation holds.
\[
  x_iA=\vprod_{\substack{\mathfrak P \in \Xx(K[\x,\y]) \\ f_i \in \mathfrak P}} \ \, \vprod_{\substack{I\subseteq \ptns{\mathfrak P}\\ i\in I}}\PP{\mathfrak P}{I}
\]
\end{theo}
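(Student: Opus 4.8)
\emph{Proof proposal.} The plan is first to pin down, as a set, the height-$1$ prime ideals of $A$ that contain $x_i$, and then to check that each of them occurs with multiplicity exactly $1$ in the divisorial factorization of $x_iA$, the latter by a cluster-localization computation.

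For the first step I would combine Lemma~\ref{Lemma:PrimesAndClusterVars} with Proposition~\ref{Prop:PartnerIdeals}. By the lemma, every height-$1$ prime of $A$ containing $x_i$ has the form $\PP{\mathfrak P}{I}$ for some height-$1$ prime $\mathfrak P$ of $K[\x,\y]$ with $f_i \in \mathfrak P$ and some $I$ with $i \in I \subseteq \ptns{\mathfrak P}$; here $\PP{\mathfrak P}{I}$ is well defined by \ref{PartnerIdeals:Equal} of the proposition. Conversely, \ref{PartnerIdeals:Cont} of the proposition gives $x_i \in \PP{\mathfrak P}{I}$ for each such pair, and \ref{PartnerIdeals:Equal} shows distinct pairs $(\mathfrak P, I)$ yield distinct prime ideals. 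Since $A$ is a Krull domain, $x_iA = \vprod_{\prim \in \Xx(A)} \prim^{\val_\prim(x_i)}$ with $\val_\prim(x_i) > 0$ exactly when $x_i \in \prim$; so the theorem reduces to showing $\val_{\PP{\mathfrak P}{I}}(x_i) = 1$ for every admissible pair $(\mathfrak P, I)$.

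For that I would fix such a pair together with a prime element $p \in K[\x,\y]$ with $\mathfrak P = pK[\x,\y]$ (no prime factor of $f_i$ is associated to a variable, so $\PP{\mathfrak P}{I}$ is defined), and pass to the localization $A_{\overline I}$, which, as recorded after the definition of the ideals $\PP{p}{I}$, is a Laurent polynomial ring over $K$ and hence factorial. From $\PP{\mathfrak P}{I} = A \cap pA_{\overline I}$ one gets $\PP{\mathfrak P}{I}A_{\overline I} = pA_{\overline I}$, a height-$1$ prime of $A_{\overline I}$ by \ref{PartnerIdeals:Prime} of Proposition~\ref{Prop:PartnerIdeals}; in particular $\PP{\mathfrak P}{I}$ is disjoint from the localizing cluster, so Proposition~\ref{p-locprime} together with the behaviour of divisorial ideals under localization recalled in the preliminaries gives $\val_{\PP{\mathfrak P}{I}}(x_i) = \val_{pA_{\overline I}}(x_i)$. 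Now $i \in I$, so $x_i'$ is a unit in $A_{\overline I}$ and the exchange relation $x_ix_i' = f_i$ yields $x_iA_{\overline I} = f_iA_{\overline I}$. Writing $f_i = p_1\cdots p_r$ with $p_1, \ldots, p_r$ pairwise non-associated prime elements of $K[\x,\y]$ by \ref{prop:exchpoly:rep} of Proposition~\ref{prop:exchpoly}, say $p = p_1$, and noting that none of the $p_s$ is divisible by a variable, the argument from the proof of \ref{PartnerIdeals:Prime} of Proposition~\ref{Prop:PartnerIdeals} shows each $p_s$ is prime in $A_{\overline I}$ and that the $p_s$ remain pairwise non-associated there. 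Hence $f_i = p_1\cdots p_r$ is a prime factorization in the factorial ring $A_{\overline I}$, so $\val_{pA_{\overline I}}(x_i) = \val_{pA_{\overline I}}(f_i) = 1$, which finishes the reduction.

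The step I expect to be the main obstacle is the valuation-matching between $A$ and its cluster localizations: one must know simultaneously that every target prime $\PP{\mathfrak P}{I}$ survives in $A_{\overline I}$, equals $pA_{\overline I}$ there, and keeps its multiplicity, and dually that all prime factors of $f_i$ remain prime and pairwise non-associated in $A_{\overline I}$, so that $\val_{pA_{\overline I}}(f_i) = 1$. Both are already in place — via Proposition~\ref{Prop:PartnerIdeals} and the Laurent-polynomial description of $A_{\overline I}$ — and what remains is careful bookkeeping.
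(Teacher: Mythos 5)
Your proposal is correct and takes essentially the same route as the paper's proof: both use Lemma~\ref{Lemma:PrimesAndClusterVars} to restrict which height-$1$ primes can occur and then localize at $A_{\overline I}$, where $x_iA_{\overline I}=f_iA_{\overline I}$ factors into pairwise non-associated primes, to conclude every exponent equals $1$. The paper phrases the final step as a comparison of coefficients for unknown exponents $k_{s,I}$ whereas you compute $\val_{\PP{\mathfrak P}{I}}(x_i)$ directly, but this is the same calculation.
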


\begin{proof}
  We proceed in the same way as in the proof of Proposition~\ref{Prop:PrimeGens}.
  By \ref{prop:exchpoly:rep} of Proposition~\ref{prop:exchpoly}, we can write $f_i = p_1 \cdots p_r$ with pairwise non-associated prime elements $p_j \in K[\x,\y]$.

  By Lemma~\ref{Lemma:PrimesAndClusterVars}, we have
  \[
    x_iA=\vprod_{s=1}^r \vprod_{\substack{I \subseteq \ptns{p_s}\\ i\in I}}\PP{p_s}{I}^{k_{s,I}} \qquad \text{with $k_{s, I} \ge 0$.}
  \]
  Localizing, for $j \in [1,r]$ and $I \subseteq \ptns{p_j}$ with $i \in I$,
  \[
    x_i A_{\overline I} = f_i A_{\overline I} = \prod_{s=1}^r p_s A_{\overline I}.
  \]
  Since $\PP{p_j}{I} A_{\overline I}= p_j A_{\overline I}$, a comparison of coefficients gives $k_{j,I} = 1$.
\end{proof}

\section{Class groups of acyclic cluster algebras}
\label{sec:class-group-acyclic}

We keep the notation of the previous section: let $\Sigma = (\x,\y,B)$ be an \emph{acyclic} seed with $\x = (x_1,\ldots, x_n)$ and $\y = (x_{n+1}, \ldots, x_{n+m} )$, and let $A = A(\Sigma)$ be the associated cluster algebra.

In this section we combine Theorem~\ref{Thm:MainGeneral} with the description of height-$1$ prime ideals over the exchangeable variables $x_1$, $\ldots\,$,~$x_n$, obtained in the previous section, to determine the rank of the class group in the acyclic case in terms of the initial exchange polynomials, and hence in terms of the initial exchange matrix $B$ together with knowledge about roots of unity in the base ring $K$.
When $B$ is skew-symmetric, and thus arising from an ice quiver $Q$, the resulting description is in terms of quiver combinatorics.
This yields a particularly explicit description in the cases where $K$ is one of $\Z$, $\Q$, or an algebraically closed field.

We then give some easier to state special cases of this theorem, and apply it to several examples.
In particular, we recover all known results on class groups of cluster algebras and the classification of factoriality for cluster algebras of (extended) Dynkin type.

Geiss--Leclerc--Schr\"oer in \cite{GLS} give necessary conditions for a cluster algebra to be factorial.
For an acyclic seed, we can now show that these conditions are also sufficient.
(For seeds where $\Gamma(B)$ contains an oriented cycle the conditions are not sufficient, as the example of a cluster algebra of type $A_3$ represented by a cycle easily shows.)

\begin{theo} \label{Thm:AcyclicFactorial}
  Let $A$ be a cluster algebra with acyclic seed $\Sigma$.
  Then the following statements are equivalent.
  \begin{equivenumerate}
  \item $A$ is factorial.
  \item The exchange polynomials $f_1$, $\ldots\,$,~$f_n$ are prime elements in $K[\x,\y]$ and pairwise distinct.
  \end{equivenumerate}
\end{theo}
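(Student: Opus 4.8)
\emph{Plan.} The strategy is to localize the question to the individual exchangeable variables and then read off the answer from the explicit prime decompositions in Theorem~\ref{Thm:Decompositions}. By Corollary~\ref{Cor:ClusterFactorial}, $A$ is factorial if and only if each of $x_1,\ldots,x_n$ is a prime element of $A$. So it suffices to prove, for a fixed $i\in[1,n]$, that
\[
  \text{$x_i$ is prime in $A$}\quad\Longleftrightarrow\quad\text{$f_i$ is prime in $K[\x,\y]$ and $f_i\neq f_j$ for all $j\in[1,n]\setminus\{i\}$;}
\]
conjoining this over all $i$ then yields the equivalence (a)$\Leftrightarrow$(b).

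For the fixed-$i$ claim I would start from Theorem~\ref{Thm:Decompositions}, which writes
\[
  x_iA=\vprod_{\substack{\mathfrak P\in\Xx(K[\x,\y])\\ f_i\in\mathfrak P}}\ \vprod_{\substack{I\subseteq\ptns{\mathfrak P}\\ i\in I}}\PP{\mathfrak P}{I},
\]
a divisorial product in which, by Proposition~\ref{Prop:PartnerIdeals}, every factor is a height-$1$ prime ideal of $A$, the factors are pairwise distinct, and each occurs to the first power. Hence this is exactly the factorization of the principal (so divisorial) ideal $x_iA$ into height-$1$ primes, and $x_iA$ is a prime ideal precisely when a single factor occurs: if two distinct height-$1$ primes both contained $x_i$, a prime ideal $x_iA$ would be a nonzero prime of height $\le 1$ contained in each, forcing them to coincide. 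For each admissible $\mathfrak P$ one has $i\in\ptns{\mathfrak P}$, and the admissible $I$ correspond to the subsets of $\ptns{\mathfrak P}\setminus\{i\}$, so the number of factors is $\sum_{f_i\in\mathfrak P}2^{\card{\ptns{\mathfrak P}}-1}$. This equals $1$ exactly when there is a unique height-$1$ prime $\mathfrak P$ of $K[\x,\y]$ with $f_i\in\mathfrak P$ and, for that $\mathfrak P$, $\ptns{\mathfrak P}=\{i\}$.

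It remains to translate these two conditions. Using that $f_i$ has no repeated factors by \ref{prop:exchpoly:rep} of Proposition~\ref{prop:exchpoly}, that a non-constant exchange polynomial is primitive (hence has no monomial factor), and that the only constant exchange polynomial is the prime $2\in\Z[\x,\y]$, one gets that $f_i$ lies in a unique height-$1$ prime of $K[\x,\y]$ if and only if $f_i$ is a prime element of $K[\x,\y]$, in which case $\mathfrak P=f_iK[\x,\y]$. Granting this, $\ptns{\mathfrak P}=\ptns{f_i}=\{\,j\in[1,n]\mid f_i\mid f_j\,\}$, a set containing $i$; and $\ptns{f_i}=\{i\}$ holds iff $f_i\nmid f_j$ for all $j\neq i$. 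Since exchange polynomials are associate only when equal, for $f_i$ prime and $f_j$ also prime the relation $f_i\mid f_j$ is equivalent to $f_i=f_j$; thus, once all $f_j$ are known to be prime (which is forced by the first condition), $\ptns{f_i}=\{i\}$ is equivalent to $f_i\neq f_j$ for $j\neq i$, and this chain of equivalences assembles to the fixed-$i$ claim, hence to the theorem. The argument is essentially bookkeeping on top of Theorem~\ref{Thm:Decompositions}; the only step requiring a little care is the passage between ``$f_i$ lies in a unique height-$1$ prime of $K[\x,\y]$'' and ``$f_i$ is prime in $K[\x,\y]$'', where primitivity of the exchange polynomials (and the special role of the constant $2$ for $K=\Z$) is used. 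Alternatively, one can bypass Corollary~\ref{Cor:ClusterFactorial} and argue through Theorem~\ref{Thm:MainGeneral}: by Lemma~\ref{Lemma:PrimesAndClusterVars} and Proposition~\ref{Prop:PartnerIdeals} one has $t=\sum_{[p]}(2^{\card{\ptns p}}-1)$, the sum over associate classes $[p]$ of prime divisors of $f_1,\ldots,f_n$, and a short count (each $i$ contributing at least one such class) gives $t\ge n$ with equality iff every $\ptns p$ is a singleton and every $f_i$ has a single prime factor, i.e. iff (b) holds; since the rank of $\cC(A)$ is $t-n$, this again proves the equivalence.
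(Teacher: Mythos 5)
Your argument follows the paper's own route. The paper proves this by combining Theorem~\ref{Thm:MainGeneral} (factoriality is equivalent to $t=n$, i.e.\ to each $x_i$ lying in a unique height-$1$ prime ideal) with Lemma~\ref{Lemma:PrimesAndClusterVars}; that is exactly your second, ``alternative'' route, and your first route via Corollary~\ref{Cor:ClusterFactorial} and Theorem~\ref{Thm:Decompositions} is only a cosmetic variant of it. The count $\sum_{\mathfrak P\ni f_i}2^{\card{\ptns{\mathfrak P}}-1}$ and its translation into condition (b) are the bookkeeping the paper leaves implicit, and they are correct.

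One step is misstated, however. The fixed-$i$ biconditional you reduce to --- ``$x_i$ is prime in $A$ iff $f_i$ is prime in $K[\x,\y]$ and $f_i\ne f_j$ for all $j\ne i$'' --- is false: its right-hand side does not exclude $f_i\mid f_j$ with $f_j$ reducible. In the paper's Example~\ref{exam:bigquiver} (over $\Q$), $f_8=x_1^2+1$ is prime and distinct from every other exchange polynomial, yet $f_8\mid f_7=x_1^6+1$, so $\ptns{f_8}=\{7,8\}$, the variable $x_8$ lies in two height-$1$ prime ideals, and $x_8$ is not prime. What is true, and what your computation actually establishes, is ``$x_i$ is prime iff $f_i$ is prime and $\ptns{f_i}=\{i\}$''; you must conjoin \emph{this} over all $i$ first, and only then --- using that all the $f_j$ are now known to be prime --- replace $\ptns{f_i}=\{i\}$ by $f_i\ne f_j$ for $j\ne i$. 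Your parenthetical ``forced by the first condition'' papers over this point: primality of $f_i$ alone forces nothing about the other $f_j$. With the conjunction taken at the right place the argument is complete and coincides with the paper's proof.
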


\begin{proof}
  By Theorem~\ref{Thm:MainGeneral} the algebra $A$ is factorial if and only if each of $x_1$, $\ldots\,$,~$x_n$ is contained in a unique height-$1$ prime ideal. The claim therefore follows from Lemma~\ref{Lemma:PrimesAndClusterVars}.
\end{proof}

\begin{coro}
  Let $Q$ be an acyclic ice quiver without parallel arrows.
  Then the cluster algebra $A=A(Q)$ is factorial  if and only if every partner set $\ps$ in $Q$ is a singleton.
  In other words, $Q$ admits no partners $i\neq j$.
\end{coro}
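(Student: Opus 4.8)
The plan is to deduce this corollary immediately from Theorem~\ref{Thm:AcyclicFactorial} and Corollary~\ref{coro:parallel-irred}. First I would observe that, since $Q$ is acyclic and has no parallel arrows, Corollary~\ref{coro:parallel-irred} guarantees that every exchange polynomial $f_i$ of the seed $\Sigma=(\x,\y,Q)$ is already a prime element of $K[\x,\y]$. Consequently the first of the two conditions appearing in the equivalence of Theorem~\ref{Thm:AcyclicFactorial}, namely that $f_1,\ldots,f_n$ be prime, is automatically fulfilled, and Theorem~\ref{Thm:AcyclicFactorial} reduces to the statement that $A$ is factorial if and only if $f_1,\ldots,f_n$ are pairwise distinct.

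Second, I would reinterpret ``pairwise distinct'' in terms of partner sets. Because $Q$ has no parallel arrows we have $\bgcd_i\in\{0,1\}$ for all $i\in[1,n]$, so each non-constant $f_i$ has the form $\prod_{j\in N_-(i)}x_j+\prod_{j\in N_+(i)}x_j$; as recorded in the discussion following Definition~\ref{defi:partners}, when all exchange polynomials are prime, two indices $i\neq j$ are partners if and only if $f_i=f_j$ (equivalently, $\{N_-(i),N_+(i)\}=\{N_-(j),N_+(j)\}$). Hence $f_1,\ldots,f_n$ are pairwise distinct exactly when no two distinct exchangeable indices are partners, i.e. exactly when every partner set $\ps$ in $Q$ is a singleton. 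Chaining the two equivalences gives the claim.

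I do not anticipate a genuine obstacle; the only point requiring a moment's care is the degenerate case $K=\Z$ with isolated exchangeable vertices, where the exchange polynomial is the constant $2$. This is still a prime element of $\Z[\x,\y]$, so Corollary~\ref{coro:parallel-irred} applies as stated, and two distinct isolated vertices have equal exchange polynomial $2$ and are therefore partners; so both formulations of the corollary remain correct in this case as well. Thus the proof is essentially a one-line assembly of the quoted results once these bookkeeping remarks are in place.
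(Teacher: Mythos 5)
Your proposal is correct and follows essentially the same route as the paper: invoke Corollary~\ref{coro:parallel-irred} to see that all exchange polynomials are prime, then apply Theorem~\ref{Thm:AcyclicFactorial} and observe that, for prime exchange polynomials, being partners is the same as having equal exchange polynomials. Your extra remark on the degenerate case $K=\Z$ with isolated vertices is a sensible bookkeeping check that the paper leaves implicit.
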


\begin{proof}
  In this case all exchange polynomials are prime elements by Corollary~\ref{coro:parallel-irred}.
  The assumption that all partner sets are singletons implies that the exchange polynomials are pairwise distinct and conversely.
\end{proof}

\begin{coro} 
\label{coro:principal-factorial}
Suppose that the seed $\Sigma=(\x,\y,B)$ is acyclic and has principal coefficients. Then the cluster algebra $A=A(\Sigma)$ is factorial.
\end{coro}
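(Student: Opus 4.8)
The plan is to reduce the corollary immediately to two facts already established in the excerpt. By Theorem~\ref{Thm:AcyclicFactorial}, for an acyclic seed the cluster algebra $A$ is factorial if and only if the exchange polynomials $f_1,\ldots,f_n$ are prime elements of $K[\x,\y]$ and are pairwise distinct. So the only thing to check is that these two properties hold under the principal-coefficients hypothesis, and both are supplied directly by Corollary~\ref{coro:principal_irred}: it asserts that every $f_i$ is a prime element of $K[\x,\y]$, and that any two distinct exchange polynomials are coprime.

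It only remains to observe that coprimality in fact forces the $f_i$ to be pairwise distinct. For this I would note that the exchange polynomials are all non-constant: since the last $n$ rows of $B$ form the identity matrix, $b_{n+i,i}=1$, so the frozen variable $x_{n+i}$ occurs in $f_i$; in particular no exchangeable index is isolated, in accordance with the standing assumption on seeds. Consequently, if $f_i=f_j$ for some $i\neq j$, then $f_i$ would be a non-trivial common factor of $f_i$ and $f_j$, contradicting coprimality. Thus the second condition of Theorem~\ref{Thm:AcyclicFactorial} is satisfied, and $A$ is factorial. There is no real obstacle here: the statement is a direct combination of Theorem~\ref{Thm:AcyclicFactorial} with Corollary~\ref{coro:principal_irred}, the only mild subtlety being the bookkeeping remark that coprime non-constant polynomials are in particular distinct.
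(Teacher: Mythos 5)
Your proof is correct and follows essentially the same route as the paper: the paper's own proof of this corollary is exactly the one-line combination of Theorem~\ref{Thm:AcyclicFactorial} with Corollary~\ref{coro:principal_irred}. Your extra remark that coprimality plus non-constancy (via $b_{n+i,i}=1$) forces the $f_i$ to be pairwise distinct is a correct and harmless elaboration of a step the paper leaves implicit.
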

\begin{proof}
In this case the exchange polynomials are pairwise distinct prime elements in $K[\x,\y]$ by Corollary~\ref{coro:principal_irred}. 
\end{proof}

More generally, we are able to determine the rank of the class group.
Recall that $\mu_d^*(K)$ denotes the set of $d$-th primitive roots of unity in $K$.
\begin{defi}[$\nu_K(d)$] 

  Let $\nu_K(d)$ denote the number of irreducible factors of the cyclotomic polynomial $\Phi_d$ over $K$.
  \end{defi}
  
  In other words, the number $\nu_K(d)$ is
  \[
    \nu_K(d)  =
    \begin{cases}
      1 & \text{ if $\mu_d^*(K) = \emptyset$,}\\
      \varphi(d) = \card{(\Z/d\Z)^\times} & \text{ if $\mu_d^*(K) \ne \emptyset$.}
    \end{cases}
  \]

  Recall also that $\bgcd_i = \gcd( b_{ji} \mid j \in [1,n+m] )$ denotes the greatest common divisor of the $i$-th column of the exchange matrix $B$ for $i \in [1,n]$ (see Definition~\ref{defi:bgcd}).
  Partner sets were introduced in Definition~\ref{defi:partners}.
  As always, our standing assumption that $[1,n]$ contains no isolated indices if $K \ne \Z$ remains in effect.

\begin{theo} \label{Thm:MainAcyclic}
  Let $\Sigma=(\x,\y,B)$ be an acyclic seed with $\x=(x_1,\ldots,x_n)$ and $\y=(x_{n+1},\ldots, x_{n+m})$, and let $A =A(\Sigma)$.
  For a partner set $\ps \subseteq [1,n]$ and $d \in \Z_{\ge 1}$, let
  \begin{itemize}
  \item $\dcount(\ps,d)$ denote the number of $i \in V$ for which $d$ divides $\bgcd_i$,
    \item $\vexp(V) = \val_2(\bgcd_i)$ be the $2$-valuation of $\bgcd_i$ for $i \in V$ \textup{(}this is independent of $i$\textup{)}.
  \end{itemize}
  Then the class group of $A$ is a finitely generated free abelian group of rank
  \[
    r= \sum_{\substack{V \\ \text{$V$ a partner set}}} r_V,
  \]
 where
 \[
    r_V = 2^{\card{V}} -  1 - \card{V} \quad\text{if $V$ is the partner set of isolated indices},
  \]
  and otherwise
  \[
    r_V =   \sum_{\substack{d \in \Z_{\ge 1} \\ d \text{ odd}}} \big(2^{\dcount(\ps,d)} - 1) \, \nu_K(2^{\vexp(\ps)+1} d)  - \card{V}.
  \]
\end{theo}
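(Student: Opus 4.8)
**The plan is to combine Theorem~\ref{Thm:MainGeneral} with the explicit divisorial-product formula of Theorem~\ref{Thm:Decompositions}.** By Theorem~\ref{Thm:MainGeneral} the class group of $A$ is free abelian of rank $t - n$, where $t$ is the number of height-$1$ prime ideals meeting $\{x_1,\ldots,x_n\}$. By Theorem~\ref{Thm:Decompositions} (and Lemma~\ref{Lemma:PrimesAndClusterVars}) these prime ideals are precisely the $\PP{\mathfrak P}{I}$, with $\mathfrak P \in \Xx(K[\x,\y])$ a prime dividing some $f_i$ and $\emptyset \ne I \subseteq \ptns{\mathfrak P}$; by \ref{PartnerIdeals:Equal} of Proposition~\ref{Prop:PartnerIdeals} distinct pairs $(\mathfrak P A, I)$ give distinct primes. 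Hence
\[
  t = \sum_{\mathfrak P} \big(2^{\card{\ptns{\mathfrak P}}} - 1\big),
\]
the sum over associate-classes of primes $\mathfrak P$ dividing some exchange polynomial, and it remains to count these primes and the sizes of their partner sets, organized by partner set $V$. Since each prime $\mathfrak P$ dividing some $f_i$ has $\ptns{\mathfrak P}$ contained in a single partner set (by Proposition~\ref{prop:exchpoly}\ref{prop:exchpoly:equiv} and the analysis following Definition~\ref{defi:partners}), the count splits as $t = \sum_V t_V$ and correspondingly $r = \sum_V r_V$ with $r_V = t_V - \card{V}$.

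**The core computation is then purely about a single partner set $V$.** For the isolated partner set (only possible when $K = \Z$), every $f_i = 2$, and $2$ is a prime of $\Z[\x,\y]$ with $\ptns{2} = V$; this is the unique relevant prime, so $t_V = 2^{\card V} - 1$ and $r_V = 2^{\card V} - 1 - \card V$. For a non-isolated partner set $V$, fix $i \in V$ and write $\bgcd_i = 2^{\vexp(V)} c_i$ with $c_i$ odd. By Lemma~\ref{lemma:poly}\ref{lemma:poly:factZ} (over $\Z$ or $\Q$) and \ref{lemma:poly:factalg} (over algebraically closed $K$), together with the splitting behavior of the homogenized cyclotomic polynomials $\Phi_e(g,h)$ recorded right after the statement of Lemma~\ref{lemma:poly}, the irreducible factors of $f_i$ in $K[\x,\y]$ are in bijection with the irreducible factors over $K$ of $\bigl(\Phi_{2^{\vexp(V)+1} d}\bigr)_{d \mid c_i}$; for each odd $d \mid c_i$ there are exactly $\nu_K(2^{\vexp(V)+1}d)$ such factors. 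Moreover, by the gcd statement in Proposition~\ref{prop:exchpoly}\ref{prop:exchpoly:factor}, for a prime $\mathfrak P$ arising as a factor of the (homogenized) cyclotomic piece indexed by odd $d$, one has $i \in \ptns{\mathfrak P}$ if and only if $d \mid \bgcd_i$, i.e. $d \mid c_i$; thus $\card{\ptns{\mathfrak P}} = \dcount(V,d)$, and this depends only on $d$, not on which of the $\nu_K(2^{\vexp(V)+1}d)$ factors we picked. Collecting primes by the odd index $d$ gives
\[
  t_V = \sum_{\substack{d \in \Z_{\ge 1} \\ d \text{ odd}}} \nu_K(2^{\vexp(V)+1} d)\,\bigl(2^{\dcount(V,d)} - 1\bigr),
\]
noting the sum is finite since $\dcount(V,d) = 0$ (so the summand vanishes) once $d \nmid \bgcd_i$ for all $i \in V$. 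Subtracting $\card V$ yields the claimed $r_V$, and summing over $V$ finishes the proof. The special case Theorem~\ref{Thm:MainAcyclicSimplified} then follows by specializing to $K = \Z$ with no isolated indices: there $\vexp(V)$ may still be positive, but $\nu_{\Z}$ is identically $1$, and one checks that the contribution of $d$ with $2^{\vexp(V)} d \nmid \bgcd_i$ vanishes, so the two-parameter sum collapses to $\sum_{d \text{ odd}}(2^{\mathsf c(V,d)}-1)$ after re-indexing by $\bgcd_i$ directly.

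**The main obstacle is the bookkeeping in the non-isolated case:** verifying that grouping the height-$1$ primes over $V$ by the odd divisor $d$ is legitimate, i.e. that (a) two homogenized cyclotomic factors $\Phi_e(g,h)$, $\Phi_{e'}(g,h)$ coming from different exchange polynomials in the same partner set agree as principal ideals of $K[\x,\y]$ exactly when their indices match after normalizing by $\bgcd$ — this is exactly the content of Proposition~\ref{prop:exchpoly}\ref{prop:exchpoly:factor} and the gcd formula there — and (b) the set of $g$-partners $\ptns{\mathfrak P}$ of such a factor is indeed $\{\,i \in V : d \mid \bgcd_i\,\}$, which again follows by applying the gcd criterion pairwise. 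One must also be slightly careful that the monomials $g,h$ are common to all $i$ in the partner set (up to swapping $g \leftrightarrow h$), which is precisely Proposition~\ref{prop:exchpoly}\ref{prop:exchpoly:factor:2}; swapping $g$ and $h$ does not change the ideal generated by $\Phi_e(g,h)$ since $\Phi_e(x,y)$ and $\Phi_e(y,x)$ are associates. Once these identifications are in place, the count is a routine sum, and the reduction to Theorem~\ref{Thm:MainAcyclicSimplified} is immediate.
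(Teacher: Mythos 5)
Your proposal is correct and takes essentially the same route as the paper: Theorem~\ref{Thm:MainGeneral} reduces the rank to counting height-$1$ primes over the initial exchangeable variables, and Lemma~\ref{Lemma:PrimesAndClusterVars} together with the cyclotomic factorization of the exchange polynomials (Lemma~\ref{lemma:poly}, Proposition~\ref{prop:exchpoly}) gives the per-partner-set count $(2^{\dcount(V,d)}-1)\,\nu_K(2^{\vexp(V)+1}d)$. The detour through Theorem~\ref{Thm:Decompositions} and the closing remarks on Theorem~\ref{Thm:MainAcyclicSimplified} are unnecessary but harmless.
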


\begin{proof}
  By Theorem~\ref{Thm:MainGeneral} the class group of $A$ is a finitely generated free abelian group, and its rank is $r=t -n$, where $t$ is the number of height-$1$ prime ideals containing one of the exchangeable variables $x_1$, $\ldots\,$,~$x_n$.
  Stated another way, $r=\sum_{V} r_V$ with the sum running over all partner sets $V$, and
  \[
    r_V = \card{\{ \prim \in \Xx(A) \ \vert\ x_i \in \prim \text{ for some $i \in V$}  \}} - \card{V}.
  \]
  We proceed to determine $r_V$.

  First assume $K=\Z$ and that $V$ is the partner set of isolated indices.
  Then $f_i=2$ is a prime element of $\Z[\x,\y]$ for every $i\in V$.
  Thus $V= \ptns{2}$ is the set of $2$-partners, and Lemma~\ref{Lemma:PrimesAndClusterVars} implies that there are $2^{\card{V}} - 1$ prime ideals containing some $x_i$ with $i \in V$.
  Thus
  \[
    r_V = 2^{\card{V}} - 1 - \card{V}.
  \]

  Let now $K$ be $\Z$ or a field of characteristic $0$, and let $V$ be a partner set distinct from the one containing the isolated vertices.
  By Proposition~\ref{prop:exchpoly} there exist monomials $g$, $h \in K[\x,\y]$ with disjoint support, at least one of which is non-trivial, such that
  \[
    f_i = g^{\bgcd_i} + h^{\bgcd_{i}} \quad\text{for every $i \in V$.}
  \]
  Moreover, $\val_2(\bgcd_i) = \val_2(\bgcd_j) = \vexp(V)$ for all $i$,~$j \in V$.
  For $i \in V$ let $\bgcd_i = 2^{\vexp(V)} c_i$ with $c_i$ odd.
  By Lemma~\ref{lemma:poly}, over $\Z$, the exchange polynomial factors as
  \[
    f_i = \prod_{d \mid c_i} \Phi_{2^{\vexp(V)+1}d}(g,h).
  \]
  By the same lemma, the polynomial $\Phi_{2^{\vexp(V)+1}d}(g,h)$ factors into a product of $\nu_K(2^{\vexp(V)+1}d)$ irreducible polynomials over $K$.

  Since $\Phi_{2^{\vexp(V)+1}d}(g,h)$ for an odd integer $d \ge 1$ appears as a factor of $f_i$ if and only if $d \mid \bgcd_i$, we conclude that it appears as a factor in $\dcount(V,d)$ of the exchange polynomials.
  By Lemma~\ref{Lemma:PrimesAndClusterVars}, we conclude that there are $(2^{\dcount(V,d)} - 1)\nu_K(2^{\vexp(V)+1}d)$ height-$1$ prime ideals containing $\Phi_{2^{\vexp(V)+1}d}(g,h)$ and one of the exchangeable variables $x_i$ with $i$ a $\Phi_{2^{\vexp(V)+1}d}(g,h)$-partner.
  Summing over all possible odd integers $d \ge 1$, we find the desired expression for $r_V$.
\end{proof}

\begin{rema}
  If $K=\Z$ or $K=\Q$, then $\nu_K(2^{\vexp(V)+1}d) = 1$ for all $V$ and $d$.
  If $K$ is algebraically closed, then $\nu_K(2^{\vexp(V)+1}d) = \varphi(2^{\vexp(V)+1}d) = 2^{\vexp(V)} \varphi(d)$ for all $V$ and odd $d \in \Z_{\ge 1}$.
\end{rema}

The expressions for $r_V$ reduce to simpler ones in two important special cases.
Recall that a non-constant exchange polynomial $f_i$ is irreducible if and only if $x^{\bgcd_i} + 1$ is irreducible over $K$.
In the cases that  $K$ is one of $\Z$, $\Q$, or an algebraically closed field, this can easily be expressed purely in terms of $\bgcd_i$; see Proposition~\ref{prop:exchpoly}.

\begin{coro}
  \label{teorema:rank}
  If all exchange polynomials corresponding to indices in the partner set $V$ are prime elements of $K[\x,\y]$, then
  \[
    r_V= 2^{\card{\ps}}-1-\card{\ps}.
  \]
\end{coro}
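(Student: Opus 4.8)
The plan is to observe that the hypothesis forces the partner set $V$ to coincide with the set $\ptns{p}$ of $p$-partners for a single prime element $p \in K[\x,\y]$, and then to count the relevant height-$1$ prime ideals directly. First I would note that any two indices $i$,~$j \in V$ are partners, so their exchange polynomials $f_i$ and $f_j$ have a non-trivial common factor; since $f_i$ and $f_j$ are assumed to be prime, this common factor is associated to each of them, whence $f_i A = f_j A$. Thus all the $f_i$ with $i \in V$ are associated to one fixed prime element $p$. Since an index $k \in [1,n]$ lies in $\ptns{p}$ exactly when $p \mid f_k$, and such a $k$ then shares the factor $p$ with every $f_i$ for $i \in V$ and is therefore a partner of the members of $V$, we obtain $V = \ptns{p}$. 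This includes the partner set of isolated indices, which only occurs for $K = \Z$ and corresponds to $p = 2$ with $f_i = 2$.

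With $V = \ptns{p}$ in hand, I would apply Lemma~\ref{Lemma:PrimesAndClusterVars}: every height-$1$ prime ideal of $A$ containing some $x_i$ with $i \in V$ has the form $\PP{p'}{I}$ for a prime $p' \in K[\x,\y]$ dividing $f_i$ and a subset $I$ with $i \in I \subseteq \ptns{p'}$. Since $p$ is the only prime factor of $f_i$ up to associates, such an ideal is exactly $\PP{p}{I}$ for some $\emptyset \ne I \subseteq \ptns{p} = V$. Conversely, for every nonempty $I \subseteq V$ the ideal $\PP{p}{I}$ is a height-$1$ prime containing $x_i$ for each $i \in I$ by parts \ref{PartnerIdeals:Prime} and \ref{PartnerIdeals:Cont} of Proposition~\ref{Prop:PartnerIdeals}, and by part \ref{PartnerIdeals:Equal} these ideals are pairwise distinct as $I$ varies over the nonempty subsets of $V$. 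Hence the number of height-$1$ primes of $A$ containing one of the $x_i$ with $i \in V$ equals the number of nonempty subsets of $V$, namely $2^{\card{V}} - 1$.

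Finally, recalling from the proof of Theorem~\ref{Thm:MainAcyclic} that
\[
r_V = \card{\{\, \prim \in \Xx(A) \mid x_i \in \prim \text{ for some } i \in V \,\}} - \card{V},
\]
I would conclude $r_V = (2^{\card{V}} - 1) - \card{V}$. Alternatively, the same value can be read off directly from the formula in Theorem~\ref{Thm:MainAcyclic}: primality of each $f_i$ forces $c_i = 1$, so that only the term $d = 1$ contributes, with $\dcount(V,1) = \card{V}$ and $\nu_K(2^{\vexp(V)+1}) = 1$, while the isolated-index case already carries the asserted value by definition of $r_V$ there. I do not anticipate any genuine difficulty in this argument; the only point requiring a moment of care is that distinct nonempty subsets $I \subseteq V$ yield distinct prime ideals $\PP{p}{I}$, which is precisely part \ref{PartnerIdeals:Equal} of Proposition~\ref{Prop:PartnerIdeals}.
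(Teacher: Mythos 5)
Your proof is correct and follows essentially the same route as the paper: both identify all $f_i$ for $i \in V$ with a single prime element $p$, invoke Lemma~\ref{Lemma:PrimesAndClusterVars} together with Proposition~\ref{Prop:PartnerIdeals} to count exactly $2^{\card{V}}-1$ height-$1$ primes of the form $\PP{p}{I}$, and also note the alternative computation directly from the formula in Theorem~\ref{Thm:MainAcyclic}. Your treatment is somewhat more detailed (in particular the explicit verification that $V=\ptns{p}$ and the uniform handling of the isolated-index case via $p=2$), but there is no substantive difference.
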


\begin{proof}
  In the case that $V$ is the partner set of isolated indices, there is nothing to show.

  Suppose therefore that $V$ is a partner set containing non-isolated vertices.
  By definition of a partner set and the irreducibility of the exchange polynomials, we must have $f_i=f_j$ for all $i$,~$j \in V$.
  By Lemma~\ref{Lemma:PrimesAndClusterVars}, there are $2^{\card{V}} - 1$ height-$1$ prime ideals in $A$ that contain some $x_i$ for $i \in V$.
  On the other hand, there are $V$ such cluster variables, each of them reducing the rank by $1$.
  Thus, $r_V = 2^{\card{V}}  - 1 - \card{V}$.

  Alternatively, one may observe that the exchange polynomials have to be irreducible over $\Z[\x,\y]$, that therefore $\bgcd_i$ is a $2$-power,  and hence $c(V,1) = \card{V}$ and $c(V,d) = 0$ for any odd $d > 1$.
  Thus, in the expression for $r_V$, only a single term is left over, and since the exchange polynomial is assumed to be irreducible, necessarily also $\nu_K(2^{\vexp(V)+1}) = 1$ for this term.
\end{proof}

Recall that $\sigma_0(c)$ denotes the number of positive divisors of $c \in \Z_{>0}$.

\begin{coro}\label{Corol singleton}
  If a partner set $V=\{i\}$ of non-isolated indices is a singleton, let $\bgcd_i = 2^{\vexp(V)} c$ with $c$ odd.
  Then
  \[
    r_V = \sum_{\substack{d \in \Z_{\ge 1}\\ d \mid \bgcd_i, \text{ $d$ odd}}} \nu_K(2^{\vexp(V)+1} d) - 1 =
    \begin{cases}
      \sigma_0(c) - 1 & \text{if $K=\Z$ or $K=\Q$,}\\
      \bgcd_i  - 1& \text{if $K$ is algebraically closed.}
    \end{cases}
  \]
\end{coro}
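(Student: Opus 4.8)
The plan is to read off Corollary~\ref{Corol singleton} directly from Theorem~\ref{Thm:MainAcyclic} by specializing the formula for $r_V$ to a singleton partner set $V=\{i\}$ and then evaluating the remaining sum in each of the two stated cases, using the remark that follows Theorem~\ref{Thm:MainAcyclic} together with two elementary divisor identities.

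First I would observe that when $V=\{i\}$ consists of a single non-isolated index, the quantity $\dcount(V,d)$ equals $1$ if $d\mid\bgcd_i$ and $0$ otherwise, so that $2^{\dcount(V,d)}-1$ equals $1$ exactly when $d\mid\bgcd_i$ and vanishes for every other $d$. Substituting this into the expression for $r_V$ in Theorem~\ref{Thm:MainAcyclic} collapses the outer sum to a sum over the odd divisors $d$ of $\bgcd_i$, giving
\[
  r_V = \sum_{\substack{d \in \Z_{\ge 1},\ d \text{ odd}\\ d \mid \bgcd_i}} \nu_K\big(2^{\vexp(V)+1} d\big) - 1,
\]
which is the first asserted equality. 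For $K=\Z$ or $K=\Q$ the remark after Theorem~\ref{Thm:MainAcyclic} gives $\nu_K(2^{\vexp(V)+1}d)=1$ for every such $d$, so $r_V$ counts the odd divisors of $\bgcd_i$, minus one; writing $\bgcd_i=2^{\vexp(V)}c$ with $c$ odd, the odd divisors of $\bgcd_i$ are exactly the divisors of $c$, of which there are $\sigma_0(c)$, which yields $r_V=\sigma_0(c)-1$. For $K$ algebraically closed the same remark gives $\nu_K(2^{\vexp(V)+1}d)=\varphi(2^{\vexp(V)+1}d)=2^{\vexp(V)}\varphi(d)$ for odd $d$ (using multiplicativity of $\varphi$ and $\varphi(2^{\vexp(V)+1})=2^{\vexp(V)}$); after factoring out $2^{\vexp(V)}$ and summing over the divisors $d$ of $c$, the classical identity $\sum_{d\mid c}\varphi(d)=c$ gives $r_V=2^{\vexp(V)}c-1=\bgcd_i-1$.

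I do not expect any genuine obstacle here: the statement is a bookkeeping consequence of Theorem~\ref{Thm:MainAcyclic} and the remark immediately following it. The only steps that need (very mild) care are the identification of the odd divisors of $2^{\vexp(V)}c$ with the divisors of $c$, and the invocation of the totient summation formula, both of which are standard.
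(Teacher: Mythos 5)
Your proposal is correct and follows essentially the same route as the paper's own proof: specialize $\dcount(V,d)$ for a singleton partner set to collapse the sum in Theorem~\ref{Thm:MainAcyclic}, then evaluate the two cases using $\nu_K=1$ for $K=\Z,\Q$ and the multiplicativity of $\varphi$ together with $\sum_{d\mid c}\varphi(d)=c$ for $K$ algebraically closed. No gaps.
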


\begin{proof}
  Since $\dcount(V,d) = 1$ if $d \mid \bgcd_i$ and $\dcount(V,d)=0$ otherwise, the expression for $r_V$ reduces to
  \[
    r_V = \sum_{\substack{d \in \Z_{\ge 1}\\ d \mid \bgcd_i, \text{ $d$ odd}}} \nu_K(2^{\vexp(V)+1} d) - 1.
  \]
  If $K$ is algebraically, closed, then $\nu_K=\varphi$ is multiplicative (on coprime integers), and
  \[
    r_V = 2^{\vexp(V)} \sum_{\substack{d \in \Z_{\ge 1}\\ d \mid \bgcd_i, \text{ $d$ odd}}} \varphi(d) - 1 = \bgcd_i - 1.
  \]

  If $K=\Z$ or $K=\Q$,
  \[
    r_V = \sum_{\substack{d \in \Z_{\ge 1}\\ d \mid \bgcd_i, \text{ $d$ odd}}} 1  - 1 = \sigma_0(c) - 1.
  \]
\end{proof}

\begin{exam}
\label{ex_rank_two}
  Let $n \ge 1$,
  \[
    B = \begin{bmatrix} 0 & n \\ -1 & 0 \end{bmatrix},
  \]
  and let $A=A(B)$ be the associated cluster algebra.
  Then $f_1 = 1+x_2$ and $f_2 = 1 + x_1^n$.
  The partner sets are $V_1=\{1\}$ and $V_2=\{2\}$, with $\bgcd_1 = 1$ and $\bgcd_2 = n$.
  Observe that $r_{V_1} = 0$, so that the rank of the class group is $r = r_{V_2}$.

  If $K$ is an algebraically closed field, then $r_{V_2}= n - 1$, corresponding to the fact that $f_2$ factors as a product of $n$ irreducible polynomials.

  Consider now the case where $K=\Z$ or $K=\Q$.
  Write $n = 2^lc$ with $l \ge 0$ and $c$ odd.
  Then $r_{V_2} = \sigma_0(c) -1$.
  This corresponds to the fact that $f_2$ has $\sigma_0(c)$ irreducible factors.
  In particular, $A$ is factorial if and only if $n$ is a power of $2$, and the rank of $\cC(A)$ is $1$ if and only if only if $n=2^lp$ for $l \ge 0$ and some odd prime $p$.
\end{exam}

\begin{exam}
  Let $Q$ be the $n$-Kronecker quiver  $Q=(1\, \xrightarrow{ \ n \ }\, 2)$, and $A=A(Q)$.
  Then $f_1=1+x_2^n$ and $f_2=1+x_1^n$.
  The rank of $\cC(A)$ is twice that of the previous example, so that the rank of $\cC(A)$ is $2(n-1)$ if $K$ is algebraically closed and $2(\sigma_0(c) - 1)$ if $K =\Z$ or $K=\Q$ (with $c$ again denoting the odd part of $n$).
\end{exam}

\begin{exam} Consider the cluster algebra over $\mathbb{Q}$ associated to the quiver from Example~\ref{exam:bigquiver}.
  \begin{center}
    \begin{tikzcd}[row sep = small]
      && 2 \arrow[drr, "3"]  
      & & 7  && 6 \arrow[dll]  \\
      \boxed{9} \arrow[rr, "2"]\arrow[urr, "3"] \arrow[drr] && 3  \arrow[rr]
      & & 1\arrow[rr] \arrow[drr] \arrow[d, "2"] \arrow[u, "6"] & & \boxed{10}\arrow[u]\\
    && 4 \arrow[urr] && 8  && 5 \arrow[u]\end{tikzcd}
  \end{center}
  Recall that we had three partner sets with 2 indices each $V_1 =\{2,4 \}$, $V_2=\{5,6 \}$ and $V_3=\{ 7, 8\}$, and two singleton sets $\{1\}$, $\{ 3\}$ of non-isolated vertices. The column-gcd's are \[(\bgcd_i)_{i \in [1,8]} = (1,3,1,1,1,1,6,2).\]
For $V $ equal to $\{1\}$ or $\{ 3\}$, we have $r_V=0$ by Corollary \ref{Corol singleton}. 
For $V$ equal to $V_1$ or $V_3$, we have 
\[ r_V = (2^2-1) + (2^1-1) - 2 = 2.\]And for $V_2$, we obtain $r_{V_2} = (2^2 - 1) - 2 = 1$. Then $r =  5$.
\end{exam}

\begin{exam} \label{Exam:StarShaped}
  Consider the cluster algebra associated to the quiver with no frozen vertices
 \begin{center} 
  \begin{tikzcd}
      &   & 0 & \\
      1 \ar[urr] & 2 \ar[ur] & \cdots & l \ar[ul].
    \end{tikzcd}
  \end{center}
  The partner sets are $V_0 = \{0\}$ and $V_1 = [1,l]$.
  For the exchange polynomials we have $f_0 = 1 + x_1\cdots x_l$ and $f_1=\dots = f_l = 1 + x_0$.
  Thus
  \[
    r = r_{V_1} = 2^l - l - 1.
  \]
\end{exam}

\begin{rema} Let $l$, $k \in \Z_{\ge 1}$.
  The \emph{Eulerian number} $A(l,k)$ is the number of permutations $\sigma\in S_l$ with exactly $k$ descents.
  Here, a \emph{descent} of $\sigma$ is a pair $(i,i+1)$ such that $\sigma(i)>\sigma(i+1)$.

  Suppose that $k=1$. Then every subset $A\subseteq [1,l]$ defines a permutation
  \[
    \sigma_A=\left(\begin{matrix}1&2&\cdots& r&r+1&r+2&\cdots&l\\a_1&a_2&\cdots&a_r&b_1&b_2&\cdots&b_{l-r}\end{matrix}\right)
  \]
  when we write $A=\{a_1<a_2<\ldots<a_r\}$ and $[1,l]\setminus A =\{b_1<b_2<\ldots<b_{l-r}\}$.
  It is easy to see that $\sigma\in S_l$ has exactly one descent if and only if $\sigma=\sigma_A$ for some set $A$ such that $a_r>b_1$.
  Note that $a_r\leq b_1$ if and only if $A=[1,u]$ for some $u \in [0,l]$. 
  We see that $A(l,1)=2^l-l-1$. Hence the ranks appearing in Example~\ref{Exam:StarShaped} are all Eulerian numbers.
\end{rema}

\begin{exam}
  Building on Example~\ref{Exam:StarShaped}, consider a quiver that is an orientation of a complete bipartite graph:  let $Q_0 = [1,l] \cup [l+1,t]$, and for each $i \in [1,l]$ and $j \in [l+1,t]$, let there be an arrow $i \to j$ in $Q$.
  The partner sets are $V_1 = [1,l]$ and $V_2 = [l+1,t]$.
  For the exchange polynomials we have $f_1 = \dots = f_l = 1 + x_{l+1}\cdots x_t$ and $f_{l+1} =\dots = f_{t} = 1 + x_1 \cdots x_l$.  
Therefore
  \[
    r = r_{V_1}  + r_{V_2} = 2^l - l - 1 + 2^{t-l} -(t-l) - 1.
  \]
\end{exam}

\begin{exam}
  This example illustrates that, in the presence of isolated vertices, the class group can differ depending on whether $K=\Z$ or $K$ is a field.
  Consider the following quiver.
  \begin{center}
    \begin{tikzcd}[row sep = small]
      1 & 4 \\
      2 \ar[ru]  \ar[rd] & \\
      3 & 5\\
    \end{tikzcd}
  \end{center}
  Then $V_1 = \{1,3\}$ is the partner set of isolated vertices, $V_2 = \{2\}$ is a singleton, and $V_3=\{4,5\}$.
  We have
  \begin{align*}
    f_1&=f_3=2, &  f_2&=1+x_4x_5, &  f_4&=f_5    = 1 + x_2.
  \end{align*}
  If $K = \Z$, then the class group has rank $r_{V_1} + r_{V_2} + r_{V_3} = 1 + 0 + 1 = 2$.
  Note that all exchange polynomials, including $f_1=f_3=2$ are prime elements in $\Z[x_1,\ldots,x_5]$.

  If $K$ is a field, to apply our results we must first freeze $1$ and $3$ (see Remark~\ref{Rem:FreezeTrivial}).
  The algebra after freezing $1$ and $3$ is isomorphic to the original one since $f_1=f_3=2 \in K^\times$.
  In particular the class group does not change after freezing the vertices.
  We find that the class group has rank $r_{V_2} + r_{V_3} = 0 + 1 = 1$.
\end{exam}

Factoriality of cluster algebras of the simply laced Dynkin types ($A$, $D$, $E$) was first classified in the preprint \cite{L1}.
Our results now allow us to easily recover this classification.
In the following we list the class groups for all cluster algebras of Dynkin or extended Dynkin type.
If $\Sigma$ is a seed of Dynkin type $G$ (without coefficients), and $A=A(\Sigma)$ is the corresponding cluster algebra, we write $\cC(G)=\cC(A)$ for the class group of $A$.

\begin{coro} \label{coro:dynkin}
  For the cluster algebras of Dynkin types over $K$, we obtain the following results:

  \begin{itemize}
  \item The cluster algebra of type $A_n$ is factorial if $n \ne 3$, and $\cC(A_3)\cong \Z$.
  \item In type $B_n$:
  \begin{enumerate}
  \item If $n=2$,
   the cluster algebra of type $B_2$ is factorial if $\mu^*_4(K) = \emptyset$, and $\cC(B_2) \cong \Z$ otherwise.
 \item  If $n=3$, $\cC(B_3) \cong \Z$.
 
 \item If $n>3$, then the cluster algebra of type $B_n$ is factorial.
 \end{enumerate}
   
  \item The cluster algebra of type $C_n$ is factorial if $\mu^*_4(K) = \emptyset$, and $\cC(C_n) \cong \Z$ otherwise.
  \item Type $D_n$ has $\cC(D_n) \cong \Z$ for $n > 4$, and $\cC(D_4) \cong \Z^4$.
  \item The cluster algebras of types $E_6$, $E_7$,~$E_8$ are factorial.
  \item The cluster algebra of type $F_4$ is factorial.
  \item Type $G_2$ has $\cC(G_2) \cong \Z$ if $\mu^*_6(K) = \emptyset$, and $\cC(G_2) \cong \Z^2$ otherwise.
  \end{itemize}
\end{coro}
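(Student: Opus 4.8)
The plan is to go through the list type by type. By the finite-type classification, every cluster algebra of Dynkin type admits an acyclic seed, namely an acyclic orientation of the corresponding Dynkin diagram equipped with the standard skew-symmetrizable exchange matrix; moreover the class group $\cC(A)$ is an intrinsic ring invariant, so it suffices to compute it for one convenient orientation in each case. For each type I would fix such a seed, read off the exchange polynomials $f_i$, the column-gcds $\bgcd_i$, and the partner sets directly from the quiver, and then apply Theorem~\ref{Thm:MainAcyclic} together with Corollaries~\ref{teorema:rank} and~\ref{Corol singleton}; the factorial cases are exactly those of vanishing rank, equivalently those handled by Theorem~\ref{Thm:AcyclicFactorial}.

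For the simply-laced types ($A_n$, $D_n$, $E_6$, $E_7$, $E_8$) there are no parallel arrows, so every exchange polynomial is irreducible by Corollary~\ref{coro:parallel-irred}, and by Theorem~\ref{Thm:AcyclicFactorial} factoriality is equivalent to the absence of partners, i.e.\ to there being no $i\neq j$ with $\{N_-(i),N_+(i)\}=\{N_-(j),N_+(j)\}$; a partner set $V$ then contributes $r_V=2^{\card{V}}-1-\card{V}$. In a tree two distinct vertices can share their unordered neighbour data only if they are two leaves attached to a common vertex (two interior vertices with a common neighbour pair would create a $4$-cycle). For the linear orientation of $A_n$ this happens only for the two endpoints, and only when $n=3$; there $f_1=f_3=1+x_2$, so $r=2^2-1-2=1$ and $\cC(A_3)\cong\Z$, while for $n\neq3$ the $f_i$ are pairwise distinct primes and $A$ is factorial. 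For $E_6$, $E_7$, $E_8$ the trivalent vertex has at most one leaf among its neighbours, so no two leaves coincide, there are no partners, and $A$ is factorial. For $D_n$ with $n\geq5$ the only coincidence is the two pendant vertices at the branch vertex, giving a single partner set of size $2$ and $\cC(D_n)\cong\Z$; for $D_4$ all three leaves of the star are mutually partners, so $r=2^3-1-3=4$ and $\cC(D_4)\cong\Z^4$.

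The delicate part is the multiply-laced types, where the exchange matrix — in particular the direction of each multiply-laced bond — must be chosen correctly. For $C_n$ with $n\geq2$ take the path $1\to\cdots\to(n-1)$ with a pendant double bond at $n$ given by $b_{n-1,n}=2$, $b_{n,n-1}=-1$: then $\bgcd_i=1$ for $i<n$ while $f_n=1+x_{n-1}^2=\Phi_4(x_{n-1},1)$ has $\bgcd_n=2$. Since partners are never neighbours and $\val_2(\bgcd_n)=1\neq0$ separates $n$ from the rest in $2$-valuation, the vertex $n$ is a singleton partner set, and the remaining path vertices have pairwise distinct neighbour data; Corollary~\ref{Corol singleton} then gives $r=r_{\{n\}}=\nu_K(4)-1$, which is $0$ if $\mu_4^*(K)=\emptyset$ and $1$ otherwise, so $\cC(C_n)$ is trivial, respectively $\cong\Z$. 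The case $n=2$ is identical and also covers $B_2=C_2$. For $B_3$, taking the double bond with $b_{2,3}=1$, $b_{3,2}=-2$ along $1\to2\to3$ gives all $\bgcd_i=1$ but $f_1=f_3=1+x_2$, so the unique non-singleton partner set $\{1,3\}$ yields $\cC(B_3)\cong\Z$; for $B_n$ with $n\geq4$ the analogous orientation has all $\bgcd_i=1$ and no partners (the only candidate, $f_1=1+x_2$ against $f_n=1+x_{n-1}$, forces $n=3$), so $A(B_n)$ is factorial. For $F_4=(1-2\Rightarrow3-4)$ the column carrying the entry of absolute value $2$ also carries a $\pm1$ from a single bond, so all $\bgcd_i=1$, every $f_i$ is irreducible, and the neighbour data $\{2\}$, $\{1,3\}$, $\{2,4\}$, $\{3\}$ are pairwise distinct: no partners, so $A(F_4)$ is factorial. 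Finally $G_2$ is the case $n=3$ of Example~\ref{ex_rank_two}, with $\bgcd_1=1$ and $\bgcd_2=3$; Corollary~\ref{Corol singleton} gives $r=r_{\{2\}}=\nu_K(2)+\nu_K(6)-1=\nu_K(6)$, which equals $1$ when $\mu_6^*(K)=\emptyset$ (in particular over $\Z$ and $\Q$, where it is $\sigma_0(3)-1$) and $2$ otherwise, so $\cC(G_2)\cong\Z$, respectively $\Z^2$.

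The main obstacle I anticipate is not conceptual but bookkeeping: pinning down for each of $B_n$, $C_n$, $F_4$, $G_2$ the exact exchange matrix, especially the orientation of the multiply-laced bonds — which is precisely what distinguishes the $B$ and $C$ series — and disposing of the small-rank exceptions $A_1$, $A_2$, $A_3$, $B_2$, $B_3$, $C_2$, $D_4$ by direct inspection. Once the exchange polynomials and partner sets are in hand, each entry of the list is an immediate application of Theorem~\ref{Thm:MainAcyclic}, Corollary~\ref{teorema:rank}, and Corollary~\ref{Corol singleton}.
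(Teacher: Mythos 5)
Your proposal is correct and follows essentially the same route as the paper: fix an acyclic seed for each Dynkin type, read off the column-gcds and partner sets from the exchange matrix, and apply Theorem~\ref{Thm:MainAcyclic} together with Corollaries~\ref{teorema:rank} and~\ref{Corol singleton} (the paper only writes out type $B_n$ and leaves the rest as "similar", whereas you supply the uniform tree/$4$-cycle argument for the simply-laced cases and the $2$-valuation argument separating the multiply-laced vertex, with the $B$/$C$ orientation convention matching the paper's explicit $B_3$ matrix). No gaps.
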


\begin{proof}
Exchange matrices for Dynkin cluster algebras can be found in \cite[Section 5]{FWZ2}. 

We prove the claim only in type $B_n$. The other statements can be shown in a similar way. The cluster algebra of type $B_2$ is treated in Example \ref{ex_rank_two}.
Assume that $n=3$. Then 
\begin{align*}
B = \begin{bmatrix} 0 & 2 & 0 \\ -1 & 0 & 1 \\ 0 & -1 & 0\end{bmatrix}
\end{align*}
is an exchange matrix of the cluster algebra of type $B_3$. There are two partner sets $V_1=\{1,3\}$ and $V_2=\{2\}$. Using $\bgcd_1=\bgcd_2=\bgcd_3=1$ we see $r=r_{V_1}=2^2-1-2=1$, so that $\cC(B_3)\cong \Z$. If $n>3$, then every partner set is a singleton and $\bgcd_i=1$ for all $i\in[1,n]$ so that $\cC(B_n)\cong 0$ for $n>3$.
\end{proof}

\begin{coro} \label{coro:extendeddynkin} For the cluster algebras of extended Dynkin types over $K$, we obtain the following results:  
 \begin{itemize}
  \item Type $\tilde{A}_n = \tilde{A}_{p,q}$. 
  \begin{enumerate}
  \item The cluster algebra of type $\tilde{A}_{p,q}$ is factorial if $(p,q)\notin\{(1,1),(2,2)\}$.
  \item The cluster algebra of type $\tilde{A}_{1,1}$ is factorial if $\mu^*_4(K) = \emptyset$, and $\cC(\tilde{A}_{1,1})\cong\Z^2$ otherwise.
  \item For type $\tilde{A}_{2,2}$ we have $\cC(\tilde{A}_{2,2})\cong \Z^2$.
  \end{enumerate}
  
  \item Type $\tilde{B}_n$:
  
  \begin{enumerate}
  \item If $n=3$, then $\cC(\tilde{B}_n) \cong \Z^4$.
 \item If $n>3$, then $\cC(\tilde{B}_n) \cong \Z$.
  \end{enumerate} 
  
  \item Type $\tilde{C}_n$:
\begin{enumerate}
\item  If $n=2$, then $\cC(\tilde{C}_2) \cong \Z$ if $\mu^*_4(K) = \emptyset$, and $\cC(\tilde{C}_2) \cong \Z^4$ if $\mu^*_4(K) \neq \emptyset$.
\item If $n>2$, then the cluster algebra of type $\tilde{C}_n$ is factorial if $\mu^*_4(K) = \emptyset$, and $\cC(\tilde{C}_n) \cong \Z^2$ if $\mu^*_4(K) \neq \emptyset$.
\end{enumerate}

  \item Type $\tilde{D}_n$ has $\cC(\tilde{D}_n) \cong \Z^2$ for $n > 4$, and $\cC(\tilde{D}_4) \cong \Z^{11}$.
  \item The cluster algebras of types $\tilde{E}_6$, $\tilde{E}_7$,~$\tilde{E}_8$ are factorial.
  \item The cluster algebra of type $\tilde{F}_4$ is factorial.
  \item Type $\tilde{G}_2$ has $\cC(\tilde{G}_2) \cong \Z$.
  \end{itemize}
\end{coro}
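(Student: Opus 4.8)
The plan is to run, type by type over the affine diagrams, exactly the argument used for Corollary~\ref{coro:dynkin}. For each extended Dynkin type $\tilde X$ I would fix a concrete acyclic seed $\Sigma=(\x,\y,B)$ whose exchange matrix is an acyclic orientation of the affine Dynkin diagram of type $\tilde X$ (such matrices can be read off from \cite[Section~5]{FWZ2} together with the standard list of affine Cartan matrices); for the $\tilde A$ series one orients the $(p+q)$-cycle with $p$ arrows going one way and $q$ the other, which is acyclic as soon as $p,q\ge 1$. From $B$ one reads off the column-gcds $\bgcd_i$ and the partner sets $\ps\subseteq[1,n]$, using the description of the partner relation in terms of $B$ (see Definition~\ref{defi:partners} and the discussion following it), and then Theorem~\ref{Thm:MainAcyclic} computes the rank $r=\sum_\ps r_\ps$. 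Theorem~\ref{Thm:MainGeneral} guarantees that $\cC(A)$ is free abelian of that rank, so it suffices to compute $r$. Since any two acyclic seeds in a mutation class differ only by mutations at sinks and sources, the partner-set data is independent of the orientation chosen, so one may pick the most convenient acyclic representative.

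For the simply-laced affine types $\tilde A_{p,q}$ with $(p,q)\ne(1,1)$, $\tilde D_n$, and $\tilde E_6,\tilde E_7,\tilde E_8$, the underlying graph has no multiple edges, so all $\bgcd_i\in\{0,1\}$, every exchange polynomial is prime by Corollary~\ref{coro:parallel-irred}, and Corollary~\ref{teorema:rank} gives $r_\ps=2^{\card{\ps}}-1-\card{\ps}$ for each partner set. The non-singleton partner sets are exactly those forced by a symmetry of the affine diagram: the two tines of a $D$-fork (size $2$, contributing $1$), the four tines of the star $\tilde D_4$ (size $4$, contributing $11$), and, in the cycle $\tilde A_{2,2}$, both the pair of degree-$2$ interior vertices and the pair consisting of the unique source and the unique sink (two sets of size $2$, contributing $2$). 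This yields $\cC(\tilde D_4)\cong\Z^{11}$, $\cC(\tilde D_n)\cong\Z^2$ for $n>4$, $\cC(\tilde A_{2,2})\cong\Z^2$, and factoriality of the $\tilde E$-types and of $\tilde A_{p,q}$ for all other $(p,q)$, where every partner set is a singleton.

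The remaining cases $\tilde A_{1,1}$ (the Kronecker quiver), $\tilde B_n$, $\tilde C_n$, $\tilde F_4$, $\tilde G_2$ are multiply-laced: at the vertices incident to a double or triple bond one gets columns with $\bgcd_i\in\{2,3\}$. Here two partners need not have equal exchange polynomials — they need only share a common factor $g^{2^{l}c}+h^{2^{l}c}$ — so one must use the full formula of Theorem~\ref{Thm:MainAcyclic} with its factor $\nu_K(2^{\vexp(\ps)+1}d)$; this is where $\mu^*_4(K)$ enters (through $\Phi_4=x^2+1$, when $\bgcd_i=2$) and where $\mu^*_6(K)$ can enter (through $\Phi_6$, when $3\mid\bgcd_i$; for $\tilde G_2$ the chosen matrix has all $\bgcd_i=1$, so in that case it does not). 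For the singleton partner sets that carry such a $\bgcd_i$, Corollary~\ref{Corol singleton} supplies the contribution directly, and for the remaining partner sets one substitutes into Theorem~\ref{Thm:MainAcyclic}. The small-rank jumps ($\tilde B_3\cong\Z^4$, $\tilde C_2$, $\tilde D_4$) are simply the cases where in low rank a partner set is unexpectedly large, or carries an unexpectedly large $\bgcd_i$ (for $\tilde B_3$, the double-bond vertex at the far end of the fork shares the tines' exchange polynomial, enlarging that partner set to size $3$).

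The proof is thus conceptually routine once Theorems~\ref{Thm:MainGeneral} and~\ref{Thm:MainAcyclic} are available, and in the write-up one would, as in Corollary~\ref{coro:dynkin}, carry out one or two representative types in detail and indicate that the rest are analogous. The main point requiring care is the bookkeeping: (i) pinning down a correct acyclic exchange matrix for each affine type, in particular the orientation and the placement of the $2$'s and $3$'s in the non-simply-laced diagrams; and (ii) correctly reading off the partner sets in those cases where exchange polynomials share a proper factor rather than coinciding — which is precisely what produces the low-rank coincidences and the dependence on which roots of unity lie in $K$.
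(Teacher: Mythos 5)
Your proposal is correct and follows essentially the same route as the paper: the paper's proof also fixes acyclic exchange matrices obtained from the affine Cartan matrices, reads off the column-gcds and partner sets, and applies Theorem~\ref{Thm:MainAcyclic} (via Corollaries~\ref{teorema:rank} and~\ref{Corol singleton}), carrying out only the type $\tilde{C}_n$ computation in detail and noting that the remaining types are analogous. Your case analysis (including the size-$3$ partner set in $\tilde{B}_3$, the two size-$2$ partner sets in $\tilde{A}_{2,2}$, and the placement of the multiple-bond entries so that, e.g., $\tilde{G}_2$ and $\tilde{F}_4$ have all relevant $\bgcd_i=1$) is consistent with the paper's stated conventions and results.
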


 \begin{proof}
We prove the claim only in type $\tilde{C}_n$. The other statements can be shown in a similar way. The exchange matrices can be obtained from the Cartan matrices as in \cite[Lecture 2, Section 4.2]{FR}, and the diagrams needed can be found in \cite[Section 4.8]{K}. Note that $\nu_K(4)=1$ if $\mu^*_4(K) = \emptyset$, and $\nu_K(4)=2$ otherwise. Assume that $n=2$. Then 
\begin{align*}
B = \begin{bmatrix} 0 & 1 & 0 \\ -2 & 0 & 2 \\ 0 & -1 & 0\end{bmatrix}
\end{align*}
is an exchange matrix of the cluster algebra of type $\tilde{C}_2$. There are two partner sets $V_1=\{1,3\}$ and $V_2=\{2\}$. Using $\bgcd_1=\bgcd_3=2$ and $\bgcd_2=1$ we see $r=r_{V_1}=(2^2-1)\nu_K(4)-2$. Hence $\cC(\tilde{C}_2)\cong \Z$ if $\mu^*_4(K) = \emptyset$, and $\cC(\tilde{C}_2)\cong \Z^4$ if $\mu^*_4(K) \neq \emptyset$. If $n>2$, then every partner set is a singleton. Using $\bgcd_i=1$ for all $i\in[2,n]$ and $\bgcd_1=\bgcd_{n+1}=2$ we see that $r=r_{\{1\}}+r_{\{n+1\}}=2(\nu_K(4)-1)$. Hence $\cC(\tilde{C}_n)\cong 0$ if $\mu^*_4(K) = \emptyset$, and $\cC(\tilde{C}_n)\cong \Z^2$ if $\mu^*_4(K) \neq \emptyset$ in this case.
\end{proof}

The following example shows that every $t \in \Z_{\ge 0}$ can appear as the rank of a class group of an acyclic cluster algebra.
As we have already seen factorial cluster algebras (that is $t=0$), it is sufficient to consider $t \ge 1$.

\begin{exam}
  Let $t \in \Z_{\ge 1}$ and consider the quiver
  \begin{center}
    \begin{tikzcd}[row sep = small, column sep=small]
         & 1 \ar[rrrr] &    & &    & 2 \ar[rr] &   & \dots  \ar[rr]  &    & t & \\
      1' \ar[ru] &  & 1'' \ar[lu] & & 2' \ar[ru] &   & 2'' \ar[lu] & \dots & t' \ar[ru] & & t'' \ar[lu].
    \end{tikzcd}
  \end{center}
  That is, to each vertex $i$ of the linear quiver on $t$ vertices we attach two new vertices $i'$, $i''$.
  The partner sets are $\{i',i''\}$ for $i \in [1,t]$, and the singleton sets $\{i\}$ for $i \in [1,t]$.
  By Corollary~\ref{teorema:rank}, then $r_{\{i',i''\}} = 1$ and $r_{\{i\}} = 0$.
  We conclude that the class group of the associated cluster algebra has rank $t$.
\end{exam}

\section{A cluster algebra that is not a Krull domain: Markov quiver}
\label{sec:markov}

For locally acyclic cluster algebras, it is known that they are noetherian and integrally closed, and therefore Krull domains.
It seems that so far no example of a cluster algebra that is not a Krull domain has been exhibited.
In this section we show that the cluster algebra associated to the Markov quiver, which also provides a wealth of other counterexamples, is not a Krull domain.

The Markov quiver is 
\[
  \begin{tikzcd}[row sep=50pt]
    & 1 \ar[dr,bend left=20] \ar[dr, bend right=20] & \\
    3 \ar[ur, bend left=20] \ar[ur, bend right=20] & & 2. \ar[ll, bend left=20] \ar[ll, bend right=20]
  \end{tikzcd}
\]
Any mutation of the Markov quiver is again isomorphic to the Markov quiver.
It is known that the cluster algebra $A$ associated to the Markov quiver (with base ring $\Z$) is non-noetherian, not equal to its upper cluster algebra, and not finitely generated (see \cite{M}).
The cluster algebra is $\Z_{\ge 0}$-graded by restricting the grading by total degree from a Laurent-polynomial ring.
(All cluster variables of this infinite-type cluster algebra have degree $1$ in this grading).

The upper cluster algebra $\cU$ of $A$, by contrast, is very well-behaved (see \cite{MM}).
It is given by $\Z[x_1,x_2,x_3,M]$ with
\[
  M = \frac{x_1^2 + x_2^2 + x_3^2}{x_1x_2x_3}.
\]
The upper cluster algebra $\cU$ can also be presented by generators $x_1$, $x_2$, $x_3$,~$M$ with the single relation $x_1x_2x_3M - x_1^2 - x_2^2 - x_3^2 = 0$.
The representation of the element $M$ is in fact independent of the chosen cluster, that is, if $\{y_1,y_2,y_3\}$ is another cluster, then also
\[
  M = \frac{y_1^2 + y_2^2 + y_3^2}{y_1y_2y_3}.
\]

In \cite{MM} also some height-$2$ prime ideals of $\cU$ are computed (the indices should be interpreted modulo $3$):
\[
  \langle x_1,x_2,x_3 \rangle_{\cU},\  \langle x_i, x_{i-1}^2 + x_{i+1}^2, M \rangle_{\cU} \text{ for $i \in [1,3]$}.
\]

Note that $\langle x_i, x_{i-1}^2 + x_{i+1}^2, M \rangle_{\cU} = \langle x_i, M \rangle_{\cU}$ due to $x_{i-1}^2 + x_{i+1}^2 = x_i x_i'$.
Another important relation to keep in mind is
\[
  x_i + x_i' = x_{i-1}x_{i+1}M.
\]

The elements $x_1$, $x_2$, $x_3$ are prime elements of $\cU$ (we use here that the base ring is $\Z$).
Hence, the upper cluster algebra $\cU$ is factorial by Nagata's Theorem.
By symmetry, every cluster variable of $A$ is a prime element of $\cU$.

\begin{lemm} \label{l-krull-non-upper}
  If $A$ is a cluster algebra with $A\ne \cU$ and $A$ is a Krull domain, then there exists a $\prim \in \Xx(A)$ such that $\prim$ contains at least one cluster variable of each cluster.
\end{lemm}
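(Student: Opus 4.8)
The plan is to combine the intersection description of a Krull domain with the Laurent phenomenon. All rings occurring below live inside the common quotient field $\mathbf q(A) = \mathbf q(\cU)$ (the two agree because the initial cluster lies in $A$). Since $A$ is a Krull domain, $A = \bigcap_{\prim \in \Xx(A)} A_\prim$, each $A_\prim$ being a discrete valuation ring. As $A \subseteq \cU$ and $A \ne \cU$, I would fix some $u \in \cU \setminus A$; by the displayed intersection there is then at least one $\prim \in \Xx(A)$ with $u \notin A_\prim$. The claim is that this single $\prim$ already contains a cluster variable of every cluster.

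The heart of the argument is the following observation: \emph{if $\prim \in \Xx(A)$ contains no cluster variable belonging to some cluster $\z$, then $\cU \subseteq A_\prim$.} Indeed, in that case every element of $\z$ lies outside $\prim$ and is therefore a unit of the local ring $A_\prim$, so $A[\z^{-1}] \subseteq A_\prim$. By the Laurent phenomenon applied to the seed whose cluster is $\z$, together with $\z \subseteq A$, one gets $A[\z^{-1}] = \Z[\z^{\pm 1}]$, a Laurent polynomial ring. Since the upper cluster algebra $\cU$ is, by definition, the intersection of the Laurent polynomial rings on all clusters, in particular $\cU \subseteq \Z[\z^{\pm 1}] = A[\z^{-1}] \subseteq A_\prim$. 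Applying the contrapositive to the $\prim$ fixed above --- for which $u \in \cU$ but $u \notin A_\prim$ --- we conclude that $\prim$ cannot miss any cluster, i.e.\ $\prim$ contains at least one cluster variable of each cluster, as asserted.

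The proof is short, so there is no genuine obstacle; the two points that must be handled with care are: (a) invoking the standard characterization $A = \bigcap_{\prim \in \Xx(A)} A_\prim$ of a Krull domain (see \cite{Fos}); and (b) correctly combining the Laurent phenomenon with the definition of the upper cluster algebra so as to obtain $\cU \subseteq A[\z^{-1}]$ for every cluster $\z$, which is exactly what yields $\cU \subseteq A_\prim$ in the key step.
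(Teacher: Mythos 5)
Your proof is correct and follows essentially the same route as the paper: both arguments rest on the Krull intersection $A=\bigcap_{\prim\in\Xx(A)}A_\prim$ together with the observation that a height-$1$ prime missing an entire cluster $\z$ forces $\cU\subseteq A[\z^{-1}]\subseteq A_\prim$ via the Laurent phenomenon. The only (immaterial) difference is organizational --- you fix a witness $u\in\cU\setminus A$ and a single offending prime, whereas the paper argues by contradiction that the full intersection would collapse to $\cU$.
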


\begin{proof}
  For a cluster $\mathbf y = \{y_1,y_2,y_3\}$ let $A_\mathbf y = A[y_1^{-1},y_2^{-1},y_3^{-1}]$.
  Let $C$ denote the set of all clusters.
  If $\prim \in \Xx(A)$ with $\mathbf y \cap \prim = \emptyset$, then $\prim A_{\mathbf y}$ is a prime ideal of $A_{\mathbf y}$ and hence $A_\prim = (A_{\mathbf y})_{\prim A_{\mathbf y}}$.

  We proceed with a proof by contradiction.
  Suppose that, for every $\prim \in \Xx(A)$, there is a cluster $\mathbf y \in C$ with $\mathbf y \cap \prim = \emptyset$.
  Since $A$ is a Krull domain
  \[
    A = \bigcap_{\prim \in \Xx(A)} A_\prim = \bigcap_{\mathbf y \in C} \bigcap_{\substack{\prim \in \Xx(A)\\\prim \cap \mathbf y = \emptyset}} A_\prim = \bigcap_{\mathbf y \in C} \bigcap_{\substack{\prim \in \Xx(A)\\\prim \cap \mathbf y = \emptyset}} (A_{\mathbf y})_{\prim A_{\mathbf y}} = \bigcap_{\mathbf y \in C} A_{\mathbf y} = \cU,
  \]
  a contradiction.
\end{proof}

\begin{center}
  \emph{For the remainder of this section, let $A$ be the Markov cluster algebra.}
\end{center}

Let $A_1 = \langle\{\, x \mid \text{$x$ is a cluster variable} \,\}\rangle_A = \{\, f \in A \mid \text{$f$ has constant term $0$} \,\}$.

Let $(x_1,x_2,x_3)$ be the initial cluster.
Note that the labeling of the initial cluster induces on every other cluster a natural labeling $(y_1,y_2,y_3)$ that is consistent with that of the initial cluster,  so that $y_j$ corresponds to $x_j$.
For $i \in [1,3]$, let
\[
  \prim(i) = \langle \{\, y_i, y_{i-1} + y_{i-1}', y_{i+1} + y_{i+1}' \ \vert\ \text{$(y_1,y_2,y_3)$ a cluster} \,\} \rangle_A,
\]
where the clusters are labeled consistently with the initial cluster.

\begin{lemm} \label{l-markov-primes}
  Let $(x_1,x_2,x_3)$ be a cluster of $A$.
  \begin{enumerate}
  \item\label{l-markov-primes:a1} $A_1 = \langle x_1,x_2,x_3 \rangle_{\cU} \cap A = A_1\cU \cap A$.
  \item\label{l-markov-primes:pi}  $\prim(i) = \langle x_i, M \rangle_{\cU} \cap A = \prim(i) \cU \cap A$ for $i \in [1,3]$.
  \end{enumerate}
  In particular, $A_1$, $\prim(1)$, $\prim(2)$, and $\prim(3)$ are prime ideals of $A$.
\end{lemm}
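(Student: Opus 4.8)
The plan is, for each of the two families, to sandwich the ideal of $A$ in question between $\prim(i)$ (resp.\ $A_1$) and the contraction of an ideal of $\cU$, and then to show that the two ends coincide. I use the facts recorded above: $\cU$ has the single relation $x_1x_2x_3M = x_1^2+x_2^2+x_3^2$; the expression $M=(x_1^2+x_2^2+x_3^2)/(x_1x_2x_3)$ is cluster-independent; and for every consistently labeled cluster $(y_1,y_2,y_3)$ (indices mod $3$) one has $y_ky_k' = y_{k-1}^2+y_{k+1}^2$ and $y_k+y_k' = y_{k-1}y_{k+1}M$. From the last identity $y_k = y_{k-1}y_{k+1}M - y_k'$, so inductively $\cU=\Z[y_1,y_2,y_3,M]$ with the single relation $y_1y_2y_3M=y_1^2+y_2^2+y_3^2$ for every cluster; in particular this applies to the cluster $(x_1,x_2,x_3)$ in the statement.

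\emph{Part (1).} Grade $\cU$ by total degree in the ambient Laurent polynomial ring, so $\deg x_j=1$ and $\deg M=-1$; cluster variables are homogeneous of degree $1$. Writing a cluster variable as a polynomial in $x_1,x_2,x_3,M$, every monomial has positive $x$-degree, so every cluster variable lies in $\langle x_1,x_2,x_3\rangle_{\cU}$. Since $A_1$ is generated by cluster variables,
\[
  A_1 \subseteq A_1\cU\cap A \subseteq \langle x_1,x_2,x_3\rangle_{\cU}\cap A .
\]
Conversely, if $f\in\langle x_1,x_2,x_3\rangle_{\cU}\cap A$, write $f=c+f'$ with $c\in\Z$ its constant term and $f'\in A_1$; then $c=f-f'\in\langle x_1,x_2,x_3\rangle_{\cU}\cap\Z=\{0\}$, since $\Z$ embeds into $\cU/\langle x_1,x_2,x_3\rangle_{\cU}\cong\Z[M]$. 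Hence $f=f'\in A_1$, all three ideals agree, and $A/A_1$ is the degree-$0$ part $\Z$ of $A$, a domain; so $A_1$ is prime.

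\emph{Part (2).} First, induction on the mutation distance shows $y_k\in\langle x_k,M\rangle_{\cU}$ for every consistently labeled cluster and every $k$: mutating at $m$ leaves positions $\ne m$ unchanged and replaces $y_m$ by $y_m'=y_{m-1}y_{m+1}M-y_m\in\langle x_m,M\rangle_{\cU}$. In particular $y_i\in\langle x_i,M\rangle_{\cU}$, while $y_{i-1}+y_{i-1}'=y_iy_{i+1}M$ and $y_{i+1}+y_{i+1}'=y_iy_{i-1}M$ lie in $\langle M\rangle_{\cU}$; so every generator of $\prim(i)$ lies in $\langle x_i,M\rangle_{\cU}$ and
\[
  \prim(i)\subseteq\prim(i)\cU\cap A\subseteq\langle x_i,M\rangle_{\cU}\cap A=:\mathfrak q .
\]
To prove $\mathfrak q\subseteq\prim(i)$, set $S=A/\prim(i)$. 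From $x_i\in\prim(i)$ and $x_ix_i'=x_{i-1}^2+x_{i+1}^2$ we get $\bar x_{i-1}^2+\bar x_{i+1}^2=0$ in $S$, so there is a ring homomorphism $\psi\colon R:=\Z[a,b]/(a^2+b^2)\to S$ with $a\mapsto\bar x_{i-1}$, $b\mapsto\bar x_{i+1}$. By induction on mutation distance, for every consistently labeled cluster $(y_1,y_2,y_3)$ one has $\bar y_i=0$ and $\bar y_{i-1},\bar y_{i+1}\in\{\pm\bar x_{i-1},\pm\bar x_{i+1}\}$ in $S$: mutating at $i$ turns the new $i$-th variable into a listed generator of $\prim(i)$, hence $0$ in $S$; mutating at $i\pm1$ negates the image of that variable because $y_{i\pm1}+y_{i\pm1}'\in\prim(i)$; and the remaining positions are unchanged. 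Thus the image in $S$ of every cluster variable lies in $\{0,\pm\bar x_{i-1},\pm\bar x_{i+1}\}\subseteq\im\psi$, so $\psi$ is surjective. On the other hand, the presentation of $\cU$ gives $\cU/\langle x_i,M\rangle_{\cU}\cong\Z[x_{i-1},x_{i+1}]/(x_{i-1}^2+x_{i+1}^2)=R$, and $\prim(i)\subseteq\mathfrak q$ yields a ring homomorphism
\[
  \phi\colon S=A/\prim(i)\twoheadrightarrow A/\mathfrak q\hookrightarrow\cU/\langle x_i,M\rangle_{\cU}\cong R .
\]
Evaluating on $a$ and $b$ gives $\phi\circ\psi=\mathrm{id}_R$, which together with the surjectivity of $\psi$ forces $\psi$ (hence $\phi$) to be an isomorphism; in particular the quotient map $A/\prim(i)\to A/\mathfrak q$ is injective, so $\prim(i)=\mathfrak q=\langle x_i,M\rangle_{\cU}\cap A=\prim(i)\cU\cap A$. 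Finally $A/\prim(i)\cong R$ is a domain because $a^2+b^2$ is irreducible over $\Z$; hence $\prim(i)$ is prime.

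\emph{Main obstacle.} The crux is the equality $\langle x_i,M\rangle_{\cU}\cap A=\prim(i)$: the ideal $\prim(i)$ is presented only by generators, and one must check that these already cut out the whole contraction. The leverage is the inductive description of cluster variables modulo $\prim(i)$ — which uses nothing beyond the listed generators of $\prim(i)$ and the involutivity of mutation — together with the explicit presentation of $\cU/\langle x_i,M\rangle_{\cU}$, so that $A/\prim(i)$ gets squeezed onto $\Z[a,b]/(a^2+b^2)$ from both sides.
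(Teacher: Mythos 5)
Your proof is correct and follows essentially the same route as the paper's: both parts sandwich the ideal of $A$ between $A_1$ (resp.\ $\prim(i)$) and the contraction of the corresponding ideal of $\cU$, and then identify $A/\prim(i)$ with $\Z[a,b]/(a^2+b^2)$ via surjections in both directions whose composite is the identity. The only genuine deviation is in part (1), where you derive $A_1\subseteq\langle x_1,x_2,x_3\rangle_{\cU}$ from the degree-$1$ homogeneity of cluster variables (every monomial in $x_1,x_2,x_3,M$ in a homogeneous degree-$1$ representation has positive $x$-degree), whereas the paper gets the same inclusion by mutation induction from $x_i+x_i'=x_{i-1}x_{i+1}M$; both are valid, and your grading argument is a slightly slicker way to obtain that one inclusion.
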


\begin{proof}
  \ref*{l-markov-primes:a1}
  First note $A_1\cU =\langle x_1,x_2,x_3 \rangle_{\cU}$: The inclusion ``$\supseteq$'' is trivial.
  Due to $x_i+x_i' = x_{i-1}x_{i+1}M$ we have $x_i' \in \langle x_1,x_2,x_3 \rangle_{\cU}$ for all $i \in [1,3]$.
  Inductively, $A_1\cU \subseteq \langle x_1,x_2,x_3 \rangle_{\cU}$.
  Therefore also $A_1 \cU \cap A = \langle x_1, x_2, x_3 \rangle_\cU \cap A$.

  Thus we have a ring homomorphism $\varphi\colon A/A_1 \to \cU/\langle x_1,x_2,x_3\rangle_{\cU} \cong \Z[M]$, where the codomain is a polynomial ring in the indeterminate $M$.
  Note that $\im\varphi \cong \Z$.
  Since also $A/A_1 \cong \Z$, we see that $\varphi$ is injective, and hence $A_1 = \langle x_1,x_2,x_3\rangle_{\cU} \cap A$.

  \ref*{l-markov-primes:pi}
  Without loss of generality we consider the case $i=1$.
  Clearly $\cU / \langle x_1, M \rangle_\cU \cong \Z[x_{2},x_{3}]/\langle x_{2}^2 + x_{3}^2\rangle$.
  We first show $\prim(1) \subseteq \langle x_1, M \rangle_\cU$, and do so inductively.
  Suppose $(y_1,y_2,y_3)$ is a cluster.
  Since $y_j + y_j' = y_{j-1}y_{j+1}M$, we have $y_j + y_j' \in \langle x_1, M \rangle_\cU$.
  If $y_1 \in \langle x_1, M \rangle_\cU$ then this implies that also $y_1' \in \langle x_1, M \rangle_\cU $.
  Hence $\prim(1)\cU \subseteq \langle x_1, M \rangle_\cU$.

  Due to $x_2$, $x_3 \in A$, the ring homomorphism $A/\prim(1) \to \cU / \langle x_1, M \rangle_\cU$ is surjective.
  If we can show $A/\prim(1) \cong \Z[x_2,x_3]/\langle x_2^2 + x_3^2\rangle$, we can conclude $\prim(1) = \langle x_1,M \rangle_\cU \cap A$ and are done.
  
  We have $x_2^2 + x_3^2 = x_1x_1' \in \prim(1)$.
  Moreover, for any cluster $(y_1,y_2,y_3)$ we have $y_j + y_j' \in \prim(1)$, so that $y_j' \equiv -y_j \mod \prim(1)$.
  This shows that $A/\prim(1)$ is generated by $x_2$ and $x_3$ as a $\Z$-algebra.
  Thus $A/\prim(1)$ is a factor ring of $\Z[x_2,x_3]/\langle x_2^2+x_3^2\rangle$.
  Since $A/\prim(1)$ maps surjectively onto $\Z[x_2,x_3]/\langle x_2^2+x_3^2\rangle$, we must in fact have $A/\prim(1) \cong \Z[x_2,x_3]/\langle x_2^2+x_3^2\rangle$.
\end{proof}

\begin{lemm} \label{l-markov-twovar}
  If a prime ideal $\prim \subseteq A$ contains two cluster variables of the same cluster, it contains all cluster variables of $A$.
\end{lemm}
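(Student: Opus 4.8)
The plan is to exploit the defining feature of the Markov cluster algebra, that \emph{every} mutation of the Markov quiver is again isomorphic to the Markov quiver. I would first record the resulting uniform shape of all exchange polynomials: in any seed of $A$ with cluster $\{z_1,z_2,z_3\}$ the underlying quiver is the Markov quiver, so each vertex $i$ has the other two vertices as neighbours, joined to it by a double arrow in and a double arrow out; hence the exchange polynomial of $z_i$ is $z_j^2+z_k^2$, where $\{i,j,k\}=\{1,2,3\}$, and the mutated variable $z_i^*$ satisfies $z_i z_i^* = z_j^2+z_k^2$.

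With this in hand the base case is immediate. Suppose $\prim$ contains two variables, say $z_i$ and $z_j$, of a cluster $\{z_1,z_2,z_3\}$. Then $z_i z_i^* = z_j^2+z_k^2\in\prim$, and since $z_j\in\prim$ this forces $z_k^2\in\prim$, hence $z_k\in\prim$ because $\prim$ is prime. So $\prim$ contains the entire cluster $\{z_1,z_2,z_3\}$.

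The core of the argument is a propagation step: if all three variables of a cluster $\mathbf z$ lie in $\prim$, then so do all three variables of every adjacent cluster $\mu_i(\mathbf z)$. Since $\mu_i(\mathbf z)$ differs from $\mathbf z$ only by replacing $z_i$ with $z_i^*$, the two unchanged variables still lie in $\prim$ and it remains to check $z_i^*\in\prim$. For this I would mutate one step further: inside the seed containing the cluster $\mu_i(\mathbf z)$, mutate at some index $j\ne i$. By the observation above, the exchange polynomial of $z_j$ in that seed is the sum of the squares of the other two variables of $\mu_i(\mathbf z)$, namely $(z_i^*)^2+z_k^2$ with $k$ the third index. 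Hence $z_j\,w=(z_i^*)^2+z_k^2$ for the newly produced variable $w$; since $z_j,z_k\in\prim$ this yields $(z_i^*)^2\in\prim$, so $z_i^*\in\prim$. Finally, because the exchange graph of $A$ is connected (all seeds lie in $\Mut(\Sigma)$), iterating the propagation step starting from the cluster obtained in the base case shows that every cluster of $A$ has all of its variables in $\prim$; as every cluster variable of $A$ occurs in some cluster, $\prim$ contains all cluster variables.

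I expect the main subtlety to lie in recognising why the naive approach fails and why the two-step mutation is needed: the identity $z_i z_i^* = z_j^2+z_k^2$, used directly in the given cluster, only yields ``$z_i\in\prim$ or $z_i^*\in\prim$'', which is vacuous once $z_i\in\prim$. The relation $z_i+z_i^*=z_{i-1}z_{i+1}M$ would settle matters instantly but is unavailable, since $M\notin A$. The fix is precisely to arrange, via an auxiliary mutation in an adjacent seed, for $z_i^*$ to appear \emph{squared} as a summand of an exchange polynomial whose complementary factor is already known to lie in $\prim$. The one external input to verify with care is that the ``sum of two squared cluster variables'' description of exchange polynomials holds in every seed of the Markov mutation class; this is a direct consequence of the stated mutation-invariance of the Markov quiver.
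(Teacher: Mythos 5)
Your proposal is correct and follows essentially the same route as the paper: first the ``two variables imply the third'' step via the relation $z_iz_i^*=z_j^2+z_k^2$, then propagation to adjacent clusters by reapplying that step to the mutated cluster (your explicit two-step mutation is exactly what the paper's phrase ``and hence also the new variable'' unpacks to). Your preliminary observation that every seed in the Markov mutation class has exchange polynomials of the form $z_j^2+z_k^2$ is the same fact the paper uses implicitly.
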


\begin{proof}
  We first show:
  If a prime ideal $\prim \subseteq A$ contains two cluster variables of the same cluster, it contains the third.
  Without restriction, let  $x_1$,~$x_2 \in \prim$.
  Let $(x_1',x_2,x_3)$ be the cluster after mutation in direction $1$, so that $x_1'x_1 = x_2^2 + x_3^2$.
  From this relation we see $x_3^2 \in \prim$, hence $\prim$ contains the entire cluster $\{x_1,x_2,x_3\}$.

  The claim now follows easily by an inductive argument:
  If $\prim$ contains a cluster $\{x_1,x_2,x_3\}$, after mutation in any direction, it still contains two variables of the mutated cluster, and hence also the new variable.
\end{proof}

\begin{lemm}
  Let $\prim \subseteq A$ be a prime ideal such that $\prim$ contains one variable of each cluster.
  Then $\prim$ contains one of $A_1$, $\prim(1)$, $\prim(2)$, or $\prim(3)$.
\end{lemm}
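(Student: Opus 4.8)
The plan is as follows. If $\prim$ meets some cluster in two variables, then Lemma~\ref{l-markov-twovar} forces $\prim$ to contain all cluster variables, hence $A_1=\langle\{\,x\mid x\text{ a cluster variable}\,\}\rangle_A\subseteq\prim$ and we are done. So I would assume from now on that $\prim$ meets every cluster in exactly one variable. Labelling every cluster consistently with the initial cluster $(x_1,x_2,x_3)$, I assign to a cluster $(y_1,y_2,y_3)$ its \emph{type}, the unique index $i$ with $y_i\in\prim$. The first step is to show that the type is a mutation invariant: if $(y_1,y_2,y_3)$ has type $1$ then $y_2,y_3\notin\prim$, so mutation in direction $1$ produces a cluster $(y_1',y_2,y_3)$ which still meets $\prim$, necessarily in $y_1'$; and mutation in direction $2$ or $3$ preserves the variable $y_1\in\prim$, while forbidding $y_2'\in\prim$ (resp.\ $y_3'\in\prim$), since otherwise that cluster would meet $\prim$ in two variables. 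As every cluster arises from the initial one by a sequence of mutations, all clusters share a common type, say $i$; after a cyclic relabelling of $(x_1,x_2,x_3)$ (which cyclically permutes $\prim(1),\prim(2),\prim(3)$) I may assume $i=1$, so that $y_1\in\prim$ for every consistently labelled cluster.

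The second step is to deduce $\prim\supseteq\prim(1)=\langle\{\,y_1,\ y_2+y_2',\ y_3+y_3'\mid(y_1,y_2,y_3)\text{ a cluster}\,\}\rangle_A$. The generators $y_1$ lie in $\prim$ by the first step. For the others I would use that the Markov exchange relations read $y_iy_i'=y_{i-1}^2+y_{i+1}^2$ in every consistently labelled cluster, which gives the identity
\[
  (y_2+y_2')\,y_2 \;=\; y_2^2+y_2y_2' \;=\; y_1^2+y_2^2+y_3^2 \;=\; y_1^2+y_1y_1' \;=\; y_1\,(y_1+y_1').
\]
Its right-hand side lies in $y_1A\subseteq\prim$, because $y_1\in\prim$ and $y_1+y_1'\in A$; since $\prim$ is prime and $y_2\notin\prim$, it follows that $y_2+y_2'\in\prim$, and symmetrically $y_3+y_3'\in\prim$. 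Hence every generator of $\prim(1)$ lies in $\prim$, which is what we want.

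The step I expect to be the real obstacle is precisely the passage from $y_1\in\prim$ to $y_2+y_2'\in\prim$. A naive divisibility argument fails here: $y_2+y_2'=y_1y_3M$ is a multiple of $y_1$ only inside $\cU$, not inside $A$, since $M\notin A$. The point of the identity $(y_2+y_2')y_2=y_1(y_1+y_1')$ is that it trades the unavailable cofactor $y_3M$ for the factor $y_2$, which can then be removed using primality of $\prim$ together with $y_2\notin\prim$. The remaining ingredients --- the bookkeeping with the consistent labelling and the connectedness of the exchange graph in the type-invariance step --- are routine but should be written out carefully.
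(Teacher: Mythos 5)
Your proposal is correct and follows essentially the same route as the paper: dispose of the two-variable case via Lemma~\ref{l-markov-twovar}, propagate the "type" of the distinguished variable through mutations using the exactly-one-variable hypothesis, and then extract $y_2+y_2'\in\prim$ from the identity $y_2(y_2+y_2')=y_1^2+y_2^2+y_3^2=y_1(y_1+y_1')$ together with primality and $y_2\notin\prim$. The only (harmless) difference is cosmetic: you conclude $A_1\subseteq\prim$ in the first case where the paper asserts $\prim=A_1$, and containment is all the lemma requires.
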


\begin{proof}
  If $\prim$ contains two variables of one cluster, then $\prim = A_1$ by Lemma~\ref{l-markov-twovar}.
  We may assume that $\prim$ contains exactly one variable of each cluster.
  Fix a cluster $\{x_1,x_2,x_3\}$ and suppose $x_1 \in \prim$.
  Then also $x_1' \in \prim$ since $\prim$ contains exactly one variable of $\{x_1',x_2,x_3\}$, and $x_2$,~$x_3 \not\in \prim$.
  Inductively, we find that for each cluster, $\prim$ contains the variable corresponding to $x_1$ of our initial cluster.

  Thus, for a cluster $(y_1,y_2,y_3)$, with labeling consistent with our initial seed, we have $y_1 \in \prim$.
  Then also $y_2^2 + y_3^2 = y_1 y_1' \in \prim$.
  Hence $y_2(y_2+y_2') = y_1^2 + y_2^2 + y_3^2 \in \prim$.
  Since $y_2 \not \in \prim$, we must have $y_2 + y_2' \in \prim$.
  Similarly, we conclude $y_3 + y_3' \in \prim$.
  Altogether $\prim(1) \subseteq \prim$.
\end{proof}

\begin{theo} \label{Thm:Markov}
  The cluster algebra $A$ associated to the Markov quiver is not a Krull domain.
\end{theo}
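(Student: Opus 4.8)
The plan is to argue by contradiction, pulling together the lemmas just established. Suppose $A$ were a Krull domain. Since $A \ne \cU$ for the Markov cluster algebra (see \cite{M}), Lemma~\ref{l-krull-non-upper} produces a height-$1$ prime ideal $\prim \in \Xx(A)$ that meets every cluster in at least one cluster variable. The preceding lemma then forces $\prim$ to contain one of the four explicit prime ideals $A_1$, $\prim(1)$, $\prim(2)$, $\prim(3)$; call it $\qrim$. As every cluster variable is nonzero, $\qrim \ne 0$, and since $0 \subsetneq \qrim \subseteq \prim$ with $\prim$ of height $1$, we must have $\prim = \qrim$. So the whole proof reduces to one thing: showing that \emph{none} of $A_1$, $\prim(1)$, $\prim(2)$, $\prim(3)$ has height $1$, i.e.\ each properly contains a nonzero prime ideal of $A$.

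For $\prim(i)$ with $i \in [1,3]$ (indices mod $3$), I would exhibit such a prime as $\qrim_i = \langle x_i\rangle_\cU \cap A$: since $x_i$ is a prime element of $\cU$, the ideal $\langle x_i\rangle_\cU$ is prime, hence its contraction $\qrim_i$ is a nonzero prime of $A$, and $\langle x_i\rangle_\cU \subseteq \langle x_i,M\rangle_\cU$ together with Lemma~\ref{l-markov-primes}\ref{l-markov-primes:pi} gives $\qrim_i \subseteq \prim(i)$. The inclusion is strict: the relation $x_i + x_i' = x_{i-1}x_{i+1}M$ shows $x_i' = x_{i-1}x_{i+1}M - x_i \in \langle x_i,M\rangle_\cU \cap A = \prim(i)$, while in the domain $\cU/\langle x_i\rangle_\cU \cong \Z[x_{i-1},x_{i+1},M]/\langle x_{i-1}^2+x_{i+1}^2\rangle$ (read off from the presentation $\cU \cong \Z[x_1,x_2,x_3,M]/\langle x_1x_2x_3M - x_1^2-x_2^2-x_3^2\rangle$) the image of $x_i'$ is the monomial $x_{i-1}x_{i+1}M$, which is nonzero because the prime $x_{i-1}^2+x_{i+1}^2$ does not divide it; hence $x_i' \notin \langle x_i\rangle_\cU$. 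Thus $0 \subsetneq \qrim_i \subsetneq \prim(i)$. For $A_1$, I would note $\prim(1) \subseteq A_1$ (it is generated by cluster variables and sums of two cluster variables, all of zero constant term) and that the inclusion is strict since $x_2 \in A_1$ but $x_2 \notin \prim(1) = \langle x_1,M\rangle_\cU \cap A$, as the image of $x_2$ in $\cU/\langle x_1,M\rangle_\cU \cong \Z[x_2,x_3]/\langle x_2^2+x_3^2\rangle$ is nonzero. So $0 \subsetneq \prim(1) \subsetneq A_1$, giving height at least $2$ for $A_1$ as well, and the contradiction is complete.

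The substance of the argument is entirely carried by the earlier lemmas; what remains is only the height estimates, and the one point requiring care is that one cannot simply cite that $\langle x_i,M\rangle_\cU$ has height $2$ in $\cU$ and conclude the same for its contraction to $A$, since heights need not be preserved under contraction from $\cU$ to the smaller ring $A$. This is precisely why the plan builds the chain $0 \subsetneq \qrim_i \subsetneq \prim(i)$ \emph{inside} $A$ rather than working in $\cU$. The divisibility verifications in the polynomial quotient rings (that $x_{i-1}x_{i+1}M$ and $x_2$ survive in the respective quotients) are routine and present no obstacle.
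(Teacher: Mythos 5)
Your proposal is correct and follows essentially the same route as the paper's proof: assume $A$ is Krull, invoke Lemma~\ref{l-krull-non-upper} and the subsequent lemma to force one of $A_1$, $\prim(1)$, $\prim(2)$, $\prim(3)$ to be a height-$1$ prime, and refute this by exhibiting a nonzero prime of $A$ strictly contained in each (the paper uses $x_i\cU\cap A$ in all cases, checking $x_i'\in\prim(i)\setminus x_i\cU$ and $x_2\in A_1\setminus x_1\cU$). Your only deviations are cosmetic: using the chain $0\subsetneq\prim(1)\subsetneq A_1$ for the $A_1$ case, and verifying $x_i'\notin x_i\cU$ by an explicit quotient-ring computation where the paper cites that distinct cluster variables are non-associated primes in $\cU$.
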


\begin{proof}
  Suppose to the contrary that $A$ is a Krull domain.
  By Lemma~\ref{l-krull-non-upper}, there exists a height-$1$ prime ideal $\prim \subseteq A$ containing one variable of each cluster.
  Hence at least one of $A_1$, or $\prim(i)$ for $i \in [1,3]$ must be a height-$1$ prime ideal of $A$.
  We show that this is false, by showing that each of these ideals properly contains the prime ideal $x_1 \cU \cap A$.
  Recall that, in $\cU$, each cluster variable is prime.
  In particular, $x_1\cU$ is a prime ideal and does not contain any other cluster variable.

  \emph{Case $A_1$:} Since $A_1 = A_1\cU \cap A$ and $x_1\cU \subseteq A_1 \cU$, we have $x_1 \cU \cap A \subseteq A_1$.
  However, $x_2 \in A_1 \setminus x_1\cU$.

  \emph{Case $\prim(1)$:} Since $\prim(1) = \prim(1)\cU \cap A$ and $x_1 \cU \subseteq \prim(1) \cU$, we have $x_1\cU \cap A \subseteq \prim(1)$.
  However, $x_1' \in \prim(1) \setminus x_1\cU$.
\end{proof}

\begin{lemm}
  None of the cluster variables of $A$ is a prime element.
\end{lemm}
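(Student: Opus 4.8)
The plan is to show that no cluster variable $x$ of the Markov cluster algebra $A$ is prime by producing, for a fixed cluster variable, an explicit factorization of one of its multiples that violates the defining divisibility property of a prime element. By the symmetry of the Markov quiver (every mutation returns an isomorphic quiver, and the automorphism group acts transitively on cluster variables), it suffices to treat $x_1$ in the initial cluster $(x_1,x_2,x_3)$.

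\textbf{Key steps.} First I would recall the two structural relations already established in this section: $x_1 x_1' = x_2^2 + x_3^2$ and $x_1 + x_1' = x_2 x_3 M$, where $M = (x_1^2+x_2^2+x_3^2)/(x_1x_2x_3) \in \cU$ but $M \notin A$. The idea is to find an element of $A$ that $x_1$ divides but whose only ``honest'' factorization forces $M$ to appear. Multiplying the second relation by suitable cluster variables or by $x_1$ itself and combining with the first relation, one obtains an identity in $A$ of the form $x_1 \cdot (\text{something in } A) = a \cdot b$ with $a, b \in A$, where neither $a$ nor $b$ is divisible by $x_1$ in $A$. Concretely, from $x_1(x_1+x_1') = x_1 x_2 x_3 M$ and $x_1^2 + x_2^2 + x_3^2 = x_1 x_2 x_3 M$ we get $x_1(x_1+x_1') = x_1^2 + x_2^2 + x_3^2 = x_1^2 + x_1 x_1'$, which is only a tautology; so instead I would use that $x_1$ divides $x_2^2 + x_3^2 = x_1 x_1'$ but show $x_1 \nmid x_2^2+x_3^2$ cannot immediately be read off — rather, one should test primality against the relation $x_1 x_1' = x_2^2+x_3^2$: if $x_1$ were prime it would divide $x_2$ or $x_3$ (after passing to a cluster where $x_2 \pm i x_3$ makes sense, or arguing over $\Z$ that $x_2^2+x_3^2$ has no factor $x_1$), which is absurd since $x_1, x_2, x_3$ are algebraically independent. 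More robustly: in $\cU$, which is factorial, $x_1$ is prime and $x_1 \cU$ contains no other cluster variable, so $x_2^2 + x_3^2 \notin x_1\cU$; since $x_2^2+x_3^2 = x_1 x_1'$ in $A$, the element $x_1$ divides $x_2^2+x_3^2$ in $A$ but not in $\cU \supseteq A$, which is already contradictory — hence the correct approach is to exhibit a product $ab \in x_1 A$ with $a, b \notin x_1 A$. Using $x_1 + x_1' = x_2 x_3 M$, multiply by $x_1$: $x_1^2 + x_1 x_1' = x_1 x_2 x_3 M \in A$, i.e. $x_1 x_2 x_3 M = x_1^2 + x_2^2 + x_3^2$, so $x_1 \mid x_2^2 + x_3^2$ is false in $\cU$ yet $x_2^2+x_3^2 = x_1 x_1' \in x_1 A$: this contradiction with $A \subseteq \cU$ shows directly that $x_1 x_1'$, while lying in $x_1 A$, has the property that in $\cU$ it factors as $x_1 x_1'$ with $x_1' \notin x_1 \cU$ — fine — the real point is to test: does $x_1 \mid x_2^2 + x_3^2$ in $A$? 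Yes, with quotient $x_1'$. Does $x_1$ divide $x_2^2+x_3^2 = (x_2+\sqrt{-1}x_3)(x_2-\sqrt{-1}x_3)$? Over a base containing $\sqrt{-1}$ this would force $x_1$ to divide a linear form, impossible; over $\Z$, I would instead argue that $x_1$ prime in $A$ implies $x_1 \cU$ prime is the contraction... The cleanest route, and the one I would actually write, is: if every $x_1$ were prime in $A$, then $A$ would be factorial by Corollary~\ref{Cor:ClusterFactorial}, hence a Krull domain, contradicting Theorem~\ref{Thm:Markov}.

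\textbf{Main obstacle and resolution.} The only subtlety is that Corollary~\ref{Cor:ClusterFactorial} is stated for seeds of cluster algebras in the sense of Definition~\ref{def:clusteralgebra}, and its proof invokes the Laurent phenomenon (valid here) and Nagata's Theorem in the form of Corollary~\ref{c-nagata} (which requires no finiteness hypothesis). So the implication ``every exchangeable variable is prime $\Rightarrow$ $A$ factorial'' applies verbatim to the Markov cluster algebra. Therefore the proof is a one-line argument: if some cluster variable of $A$ were a prime element, then by symmetry every cluster variable in some cluster would be prime (mutation-equivalence plus the transitive symmetry), hence by Corollary~\ref{Cor:ClusterFactorial} the algebra $A$ would be factorial, hence a Krull domain with trivial class group, contradicting Theorem~\ref{Thm:Markov}. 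The expected ``hard part'' — finding an explicit non-unique factorization — is thus circumvented entirely by bootstrapping off the already-proven Theorem~\ref{Thm:Markov}. I would only need to be careful to note that the symmetry of the Markov quiver lets one conclude from ``one cluster variable is prime'' that ``all cluster variables of a single cluster are prime,'' so that Corollary~\ref{Cor:ClusterFactorial}\ref*{Cor:ClusterFactorial:prime} applies.

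\begin{proof}
  Suppose, for contradiction, that some cluster variable $z$ of $A$ is a prime element.
  Any mutation of the Markov quiver is again isomorphic to the Markov quiver, and the resulting symmetry acts transitively on the cluster variables of $A$.
  Hence every cluster variable of $A$ is a prime element; in particular, every exchangeable variable of the initial seed is a prime element.
  By Corollary~\ref{Cor:ClusterFactorial}, the cluster algebra $A$ is then factorial, and therefore a Krull domain.
  This contradicts Theorem~\ref{Thm:Markov}.
\end{proof}
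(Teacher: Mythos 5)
Your final proof is correct and is essentially the paper's own argument: by symmetry one prime cluster variable forces all of them to be prime, whence $A$ is factorial (via Nagata's Theorem, packaged as Corollary~\ref{Cor:ClusterFactorial}) and in particular a Krull domain, contradicting Theorem~\ref{Thm:Markov}. The paper also records a second, more direct argument from the identity $x_i(x_i+x_i') = x_1^2+x_2^2+x_3^2$ and the fact that cluster variables are pairwise non-associated atoms, but your route matches its primary proof; note only that the exploratory claim in your plan that $x_2^2+x_3^2 \notin x_1\cU$ is false (it equals $x_1x_1'$), though this does not affect the proof you actually wrote.
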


\begin{proof}
  If one cluster variable is prime, then all are, by symmetry.
  Due to Nagata's Theorem, the algebra $A$ is then factorial, and hence a Krull domain.
  This contradicts what we just showed.

  Alternatively, and more directly, if $(x_1,x_2,x_3)$ is a cluster we see
  \[
    x_i(x_i+x_i') = x_1^2 + x_2^2 + x_3^2,
  \]
  for all $i \in [1,3]$.
  Since we know that cluster variables are non-associated atoms, we see that $A$ is not prime.
\end{proof}

\begin{rema}
  Since $A$ is not a Krull domain, it cannot be both $v$-noetherian and completely integrally closed.
  We do not know which of these properties fail(s).
\end{rema}

\section{Non-invertible frozen variables}
\label{sec:noninvertible}

Throughout the paper we have so far assumed that in a cluster algebra all frozen variables are invertible.
In general one is also interested in cases where only some (or none) of the frozen variables are invertible.
In this section we briefly consider this situation.
It turns out that Theorem~\ref{Thm:MainGeneral} extends straightforwardly, while the issue of Theorem~\ref{Thm:MainAcyclic} (and hence Theorem~\ref{Thm:MainAcyclicSimplified}) is more subtle.

First we need to revisit the definition of a cluster algebra (Definition~\ref{def:clusteralgebra}).
For a tuple of frozen variables $\y$ we tacitly use the notation $\inv \subseteq \y$ to denote a subset of frozen variables.
Given a seed $\Sigma=(\x,\y,B)$ and such a subset $\inv \subseteq \y$ of frozen variables, let again $\mathcal{X}$ denote the set of all exchangeable variables in a seed equivalent to $\Sigma$, and define
\[
  A(\Sigma,\inv) = K[x,y,z,z^{-1} \mid x \in \mathcal{X},\, y \in \y,\, z \in \inv] \subseteq \mathcal{F}(\Sigma).
\]
The difference to Definition~\ref{def:clusteralgebra} is that now only the frozen variables in $\inv$ are invertible, while those in $\y \setminus \inv$ are \emph{non-invertible frozen variables}.
The case that we have dealt with so far corresponds to all frozen variables being invertible, that is $\inv = \y$.

For $A = A(\Sigma,\inv)$ with $\inv \subseteq \y$ let us define the mixed Laurent/polynomial ring
\[
  A_\x = A[x^{-1} \mid x \in \x] = K[x,x^{-1},y,z,z^{-1} \mid x \in \x,\, y \in \y,\, z \in \inv] \subseteq \mathcal{F}(\Sigma).
\]
A refined version of the Laurent phenomenon implies $A \subseteq A_\x$ (see \cite[Theorem 3.3.6]{FWZ}).
Geiss--Leclerc--Schröer, in \cite{GLS}, consider this more general case as well, and so Theorem~\ref{Thm:ClusterAtoms} holds with the following modification: the group of units of $A$ is $A^\times = K^\times \times \langle x^{\pm 1}, z^{\pm 1} \mid x \in \x,\, z \in \inv \rangle$.
The non-invertible frozen variables are atoms in $A$ because they are atoms in $A_\x$ and $A^\times=A_\x^\times$.
Corollary~\ref{Cor:ClusterFactorial} carries over straightforwardly to this setting.

We obtain a generalization of Theorem~\ref{Thm:MainGeneral}.
\begin{theo} \label{Thm:MainGeneralNonInvertible}
  Let $\Sigma = (\x,\y,B)$ be a seed with exchangeable variables $\x=(x_1,\ldots, x_n)$ and frozen variables $\y=(x_{n+1}, \ldots, x_{n+m})$.
  Let $A=A(\Sigma,\inv)$ be the cluster algebra associated to $\Sigma$ in which \textup{(}only\textup{)} the frozen variables $\inv \subseteq \y$ are invertible.
  Suppose that $A$ is a Krull domain, and let $t \in \Z_{\ge 0}$ denote the number of height-$1$ prime ideals that contain one of the exchangeable variables $x_1$, $\ldots\,$,~$x_n$.
  Then the class group of $A$ is a free abelian group of rank $t - n$.

  If $n+m > 0$, that is $A \ne K$, then each class contains exactly $\card{K}$ height-$1$ prime ideals.
\end{theo}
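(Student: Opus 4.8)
The plan is to carry over the proof of Theorem~\ref{Thm:MainGeneral} essentially word for word, replacing the Laurent polynomial ring $K[x_1^{\pm1},\ldots,x_{n+m}^{\pm1}]$ by the mixed Laurent/polynomial ring $A_\x$ and the base ring $K$ by the larger ring $D = K[\,y,z,z^{-1} \mid y\in\y,\ z\in\inv\,]$. The key observation that makes this work is that $D$ is still a factorial domain: it is a localization of the polynomial ring $K[\y]$ over the UFD $K$ at the multiplicative set generated by the indeterminates $\inv$. Moreover $D\subseteq A$, and by the refined Laurent phenomenon noted above one has $A\subseteq A_\x$ with $A_\x = A[x_1^{-1},\ldots,x_n^{-1}] = D[x_1^{\pm1},\ldots,x_n^{\pm1}]$. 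Since $K$ is infinite (it is a field of characteristic $0$ or $\Z$), $\card{D}=\card{K}$, with $D=K$ when $m=0$.

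First I would invoke Theorem~\ref{Thm:CG} with the factorial subring $D$ and the set $\{x_1,\ldots,x_n\}$ of exchangeable variables; this gives $\cC(A)\cong\Z^{t}/\langle\mathbf a_i\mid i\in[1,n]\rangle$, where $\prim_1,\ldots,\prim_t$ are the height-$1$ primes containing one of $x_1,\ldots,x_n$ and $x_iA=\vprod_{j=1}^{t}\prim_j^{a_{ij}}$. Next I would re-prove Lemma~\ref{Lemma:IndicatorPrimes} in this setting: its proof goes through verbatim once the Laurent polynomial ring $A[u^{-1}\mid u\in\x'\cup\y]$ is replaced by the mixed ring $A_{\x'}=A[u^{-1}\mid u\in\x']$, which is again factorial and in which the exchange polynomial $f_i$ is a non-unit (it is a binomial that is not a monomial, or the prime element $2$ when $K=\Z$ and $i$ is isolated); one sets $\prim=pA_{\x'}\cap A$ for a prime factor $p$ of $f_i$ and concludes with Krull's principal ideal theorem together with the fact that $f_i$ has no repeated factors (part~\ref{prop:exchpoly:rep} of Proposition~\ref{prop:exchpoly}). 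With Lemma~\ref{Lemma:IndicatorPrimes} available, the elimination argument from the proof of Theorem~\ref{Thm:MainGeneral} applies unchanged: each row $\mathbf a_i$ has an entry equal to $1$ in a column where every other row vanishes, so $n$ of the $t$ generators can be eliminated and $\cC(A)$ is free of rank $t-n$.

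For the final assertion assume $A\ne K$, i.e.\ $n+m\ge1$. If $n\ge2$, Theorem~\ref{Thm:PrimeDistribution} applies directly (with the infinite domain $D$) and every class of $\cC(A)$ contains exactly $\card{D}=\card{K}$ height-$1$ prime ideals. If $n=1$, then: when $K$ is a field the unique exchangeable vertex is non-isolated by our standing assumption, hence has a (necessarily frozen) neighbor, so $m\ge1$ and $D$ contains a variable $w$; the primes generated by $w-a$ then show $D$ has at least $\card{K}=\card{D}$ height-$1$ primes, while when $K=\Z$ the subring $\Z\subseteq D$ already supplies $\aleph_0=\card{\Z}=\card{D}$ of them; in either case Theorem~\ref{Thm:PrimeDistribution} applies. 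If $n=0$, then $m\ge1$, $A=D$ is a localization of $K[\y]$ with $\cC(A)=0$, and $A$ has at most $\card{A}=\card{K}$ and (since $K$ is infinite) at least $\card{K}$ height-$1$ prime ideals, hence exactly $\card{K}$, all in the unique class.

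I do not expect a genuine obstacle here. The only point requiring care is the one isolated at the start — that $A_\x$ remains a Laurent polynomial ring over a factorial ring, namely $D$ — after which Theorems~\ref{Thm:CG} and~\ref{Thm:PrimeDistribution} and Lemma~\ref{Lemma:IndicatorPrimes} transfer with $K$ replaced by $D$; the remaining work is the low-dimensional bookkeeping in the last paragraph needed to meet the hypotheses of Theorem~\ref{Thm:PrimeDistribution} when $n\le1$.
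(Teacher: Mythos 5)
Your proposal is correct and follows essentially the same route as the paper: the paper's proof likewise notes that Lemma~\ref{Lemma:IndicatorPrimes} carries over verbatim, identifies $A[x_1^{-1},\ldots,x_n^{-1}]=A_\x$ as a Laurent polynomial ring over the factorial mixed ring $D=K[y,z,z^{-1}\mid y\in\y,\,z\in\inv]$, applies Theorem~\ref{Thm:CG}, and then proceeds as in the proof of Theorem~\ref{Thm:MainGeneral}. Your extra case analysis for $n\le 1$ in the prime-distribution step is just a more explicit rendering of the bookkeeping the paper leaves implicit.
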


\begin{proof}
  We note that Lemma~\ref{Lemma:IndicatorPrimes} holds true in this setting with the same proof.
  
  Now $A[x_1^{-1},\ldots,x_n^{-1}] = A_\x = D[x_1^{\pm 1}, \ldots, x_n^{\pm 1}]$ is a Laurent polynomial ring in the indeterminates $x_1$, $\ldots\,$,~$x_n$ over the mixed Laurent/polynomial ring $D=K[y,z,z^{-1} \mid y \in \y, z \in \inv]$.
  Since $D$ is factorial, we may apply Theorem~\ref{Thm:CG} to the cluster algebra $A$ and the exchangeable variables $\{x_1,\ldots,x_n\}$.
  Thus, the class group $\mathcal C(A)$ is again generated by $t$ elements with $n$ relations, stemming from the factorizations of $x_1A$, $\ldots\,$,~$x_nA$ as divisorial products of height-$1$ prime ideals.

  The proof now proceeds as the one of Theorem~\ref{Thm:MainGeneral} (on page \pageref{Proof:MainGeneral}).
\end{proof}

The situation of acylic seeds (Theorem~\ref{Thm:MainAcyclic}) is more complicated.
The work in \cite{BFZ,M} that we build upon assumes that frozen variables are invertible.
This factors into our work in several ways.

\begin{enumerate}
  \item
    To show that (locally) acyclic cluster algebras with $\inv=\y$ are Krull domains, one uses that they are equal to their upper cluster algebras, see \cite{M2}.
    Using the refined Laurent phenomenon, we may define the upper cluster algebra of $A=A(\Sigma,\inv)$ as
    \[
      \mathcal U = \mathcal U(\Sigma,\inv) = \bigcap_{\Sigma'=(\x',\y,B')} A_{\x'},
    \]
    where the intersection is taken over all seeds $\Sigma'$ mutation-equivalent to $\Sigma$.

    Whether a given cluster algebra is equal to its upper cluster algebra is an important theme in the study of cluster algebras.
    We refer to the introduction of \cite{GY2} for a good overview on current results.
    In contrast to the case $\inv=\y$, if $\inv \ne \y$, then there exist locally acyclic cluster algebras that are \emph{not} equal to their upper cluster algebra.
    An example can be found in \cite[Proposition 4.1]{BMS}, based on \cite{GSV}.

    However, even when $\inv \ne \y$, there are still many families of cluster algebras known that coincide with their upper cluster algebras.
    In \cite{GY2}, Goodearl--Yakimov construct cluster algebra structures on symmetric Poisson nilpotent algebras (under mild conditions), and show that these cluster algebras are equal to their upper cluster algebras.

    On the combinatorial side, Bucher--Machacek--Shapiro, in \cite{BMS}, give sufficient criteria for $A=\mathcal U$ in the case where $\inv \ne \y$.
    They show $A=\mathcal U$ for \emph{locally isolated} cluster algebras.
    If $\y=\inv$, then a cluster algebra is locally isolated if and only if it is locally acyclic, but in general the former condition is more restrictive than the latter.
  
    A seed $\Sigma=(\x,\y,B)$ is \emph{source-freezing} (with respect to $\inv \subseteq \y$) if all arrows between an exchangeable index and non-invertible frozen index point \emph{from} the exchangeable index \emph{to} the frozen index.
    In other words, if $i$ is the index of a non-invertible frozen variable, then the entries of the $i$-th row of the exchange matrix $B$ are non-positive.
    If $\Sigma$ is acyclic source-freezing, then $A(\Sigma,\inv) = \mathcal{U}(\Sigma,\inv)$ by \cite[Theorem 3.7 and Corollary 3.8]{BMS}.
  \item Theorem~\ref{Thm:BFZ}, giving a presentation for acyclic cluster algebras, is used in Section~\ref{sec:acyclic-prime} and assumes $\y=\inv$.
  \item The proof of the important Lemma~\ref{Lemma:Muller} crucially assumes $\y=\inv$.
\end{enumerate}

That said, the following lemma in combination with Nagata's Theorem sometimes still allows a reduction to the case of invertible frozen variables.

\begin{lemm} \label{lem:killnoninv}
  Let $\Sigma=(\x,\y,B)$ be a seed with $\x=(x_1,\ldots,x_n)$ and $\y=(x_{n+1},\ldots, x_{n+m})$.
  Let $\inv \subseteq \y$ and $A =A(\Sigma,\inv)$.
  Then every non-invertible frozen variable is a prime element in the upper cluster algebra of $A$.
\end{lemm}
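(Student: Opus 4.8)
The plan is to exploit the chain $A \subseteq \mathcal{U} \subseteq A_{\x}$ together with the fact that, for \emph{every} seed $\Sigma'=(\x',\y,B')$ mutation-equivalent to $\Sigma$, the refined Laurent phenomenon $A \subseteq A_{\x'}$ yields $A_{\x'} = D[(x')^{\pm 1} \mid x' \in \x']$, a Laurent polynomial ring over the factorial ring $D = K[y,z,z^{-1} \mid y \in \y,\, z \in \inv]$, in which the non-invertible frozen variable $x_k \in \y \setminus \inv$ is a prime element. Thus $x_k$ is a nonzero non-unit of $\mathcal{U}$ (being a non-unit of $A_{\x}$), and to conclude that it is a prime element of $\mathcal{U}$ it suffices to show $x_k\mathcal{U} = x_kA_{\x} \cap \mathcal{U}$: then $\mathcal{U}/x_k\mathcal{U}$ embeds into the domain $A_{\x}/x_kA_{\x} \cong (D/x_kD)[x_1^{\pm 1},\ldots,x_n^{\pm 1}]$, so $x_k\mathcal{U}$ is prime.

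Write $\val_{\x'}$ for the $x_k$-adic valuation of the UFD $A_{\x'}$, regarded as a valuation on $\mathcal{F}(\Sigma)$. The key claim is that $\val_{\x'} = \val_{\x}$ for all $\Sigma'$. Granting this, for $u \in \mathcal{U}$ we have, for each $\Sigma'$, that $u/x_k \in A_{\x'} \iff x_k \mid u$ in $A_{\x'} \iff \val_{\x'}(u)\ge 1 \iff \val_{\x}(u)\ge 1 \iff u/x_k \in A_{\x}$; hence $u/x_k \in A_{\x}$ already forces $u/x_k \in \bigcap_{\Sigma'}A_{\x'} = \mathcal{U}$, which is exactly the identity $x_k\mathcal{U} = x_kA_{\x}\cap\mathcal{U}$ we want.

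To prove $\val_{\x'} = \val_{\x}$ I would induct on the mutation distance from $\Sigma$ to $\Sigma'$, reducing the problem to a single mutation $\Sigma'' = \mu_l(\Sigma')$, where one must show $\val_{\x'} = \val_{\x''}$. Since $\x''$ differs from $\x'$ only in position $l$, with $x_l'' = f_l^{\Sigma'}/x_l'$ for the exchange polynomial $f_l^{\Sigma'}$ of $\Sigma'$, and since $f_l^{\Sigma'}$ is a sum of two monomials of disjoint support, $x_k$ divides at most one of them; hence $x_k \nmid f_l^{\Sigma'}$ in $A_{\x'}$ and $\val_{\x'}(x_i'')=0$ for every $i$. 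Therefore $A_{\x''} = D[(x'')^{\pm1}\mid x'' \in \x''] \subseteq \mathcal{O}_{\val_{\x'}}$, the valuation ring of $\val_{\x'}$. Writing $s = \sum_{\alpha} d_\alpha (\x'')^\alpha \in A_{\x''}$ with $d_\alpha \in D$ and $e = \min_\alpha \ord_{x_k}^{D}(d_\alpha)$, one checks that $s/x_k^{e}$ has nonzero residue modulo the maximal ideal of $\mathcal{O}_{\val_{\x'}}$, so $\val_{\x'}(s) = e$; that is, $\val_{\x'}$ restricts to the $x_k$-adic valuation of $A_{\x''}$, whence $\val_{\x'} = \val_{\x''}$ on $\mathcal{F}(\Sigma) = \mathbf{q}(A_{\x''})$.

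The one nonformal point in this last step — and the main obstacle — is the nonvanishing of that residue, which comes down to the residues $\overline{x_1''},\ldots,\overline{x_n''}$ being algebraically independent over the residue field $\mathbf{q}(D/x_kD)$ of $\val_{\x'}$. Indeed $\overline{x_i''}=\overline{x_i'}$ for $i \ne l$, while $\overline{x_l''} = \overline{f_l^{\Sigma'}}\,\overline{x_l'}^{\,-1}$ with $\overline{f_l^{\Sigma'}} \ne 0$ not involving $x_l'$ (an exchange polynomial does not involve its own variable), so these $n$ residues generate the transcendence-degree-$n$ field $\mathbf{q}(D/x_kD)(\overline{x_1'},\ldots,\overline{x_n'})$ and are thus algebraically independent; consequently distinct monomials in them are linearly independent over $\mathbf{q}(D/x_kD)$, and $\overline{s/x_k^e}$ cannot vanish. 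Everything else — factoriality of each $A_{\x'}$, the binomial shape of the exchange polynomials, and the induction — is routine. Note that the argument uses only the refined Laurent phenomenon and the definition of $\mathcal{U}$, so it applies to arbitrary, not necessarily acyclic, seeds, in contrast with the results of Sections~\ref{sec:acyclic-prime}--\ref{sec:class-group-acyclic}.
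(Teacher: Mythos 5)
Your proof is correct and follows essentially the same route as the paper: both reduce primality of the non-invertible frozen variable $x_k$ in $\cU$ to its primality in the mixed Laurent/polynomial ring $A_{\x}$, and then propagate divisibility by $x_k$ from one seed to an adjacent one, the key input in each case being that $x_k$ cannot divide an exchange polynomial (a sum of two monomials with disjoint support) and that $x_k$ is prime in each $A_{\x'}$. The only real difference is in how the single-mutation step is executed: the paper clears denominators and compares the two Laurent expansions directly, whereas you prove the slightly stronger statement that the $x_k$-adic valuations attached to adjacent seeds coincide, via algebraic independence of the residues of the mutated cluster --- the same mechanism in a valuation-theoretic wrapper.
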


\begin{proof}
  By relabeling, we may suppose $x_{n+1}$, $\ldots\,$,~$x_{n+p} \not\in A^\times$ and $x_{n+p+1}$, $\ldots\,$,~$x_{n+m} \in A^\times$ for some $p \in [0,m]$.
  Let $i \in [n+1,n+p]$ so that $x_i$ is a non-invertible frozen variable.
  Let $\cU=\cU(\Sigma,\inv)$ be the upper cluster algebra of $A$.
  Let $a$,~$b \in \cU$ and suppose that $x_i$ divides $ab$.
  We have to show that $x_i$ divides $a$ or $b$ in $\cU$.

  Since $x_i$ is prime in $A_\x$, the element $x_i$ must divide $a$ or $b$ in $A_\x$.
  Without restriction, assume that $x_i$ divides $a$ in $A_\x$, in other words $a/x_i \in A_\x$.
  We shall show that then $a/x_i \in A_{\x'}$ for every seed $(\x',\y,B')$.
  Then $a/x_i \in \cU$, that is, the element $x_i$ divides $a$ in $\cU$.

  Since any two seeds are connected by a sequence of mutations, it suffices to show:
  If $(\x,\y,B)$ is any seed with $a/x_i \in A_\x$, and $(\x',\y,B)$ is obtained from $(\x,\y,B)$ by mutation at $k \in [1,n]$, then also $a/x_i \in A_{\x'}$.
  To simplify the notation, we may assume $k=1$.
  Since $a \in x_i A_{\x}$ and  $a \in \cU \subseteq A_{\x'}$, we may write
  \begin{equation} \label{e:laurent}
    a = \frac{x_i f(x_1,x_2,\ldots,x_{n+m})}{x_1^rM^r} = \frac{g(x_1',x_2,\ldots,x_{n+m})}{(x_1')^rM^r},
  \end{equation}
  with $f \in K[x_1,x_2,\ldots,x_{n+m}]$, with $g \in K[x_1',x_2,\ldots,x_{n+m}]$, with $M=x_2\cdots x_nx_{n+p+1}\cdots x_{n+m}$ and with $r \in \Z_{\ge 0}$.
  Since $f \in A \subseteq A_{\x'}$ we may further write $f = f'(x_1',x_2,\ldots,x_{n+m}) / (x_1')^s M^s$ with $f' \in K[x_1',x_2,\ldots,x_{n+m}]$ and $s \in \Z_{\ge 0}$.
  Moreover $x_1 x_1' = f_1(x_2,\ldots,x_n)$ with $f_1 \in K[x_2,\ldots,x_n]$ the exchange polynomial of $x_1$.
  Substituting into Equation~\eqref{e:laurent} to eliminate $x_1$, and clearing denominators,
  \[
    (x_1')^{2r} x_i f' = (x_1')^s M^s f_1^r g.
  \]
  All the factors in this expression are contained in the polynomial ring $K[x_1',x_2,\ldots,x_{n+m}]$, in which $x_i$ is a prime element.
  Since $x_i$ does not divide the exchange polynomial $f_1$ on the right side, it must divide $g$.
  Then $g = x_i \tilde{g}$ with $\tilde g \in K[x_1',x_2,\ldots,x_{n+m}]$ and thus $a/x_i = \tilde{g} / (x_1')^rM^r \in A_{\x'}$.
\end{proof}

\begin{prop}
  Let $\Sigma = (\x,\y,B)$ be a seed with exchangeable variables $\x=(x_1,\ldots, x_n)$ and frozen variables $\y=(x_{n+1}, \ldots, x_{n+m})$.
  Let $A(\Sigma,\inv)$ be the cluster algebra associated to $\Sigma$ in which \textup{(}only\textup{)} the frozen variables $\inv \subseteq \y$ are invertible.
  Suppose that $A(\Sigma,\inv)$ is a Krull domain and that it is equal to its upper cluster algebra $\mathcal U(\Sigma,\inv)$.
  Then
  \[
    \mathcal{C}(A(\Sigma,\inv)) \cong \mathcal{C}(A(\Sigma,\y)).
  \]
\end{prop}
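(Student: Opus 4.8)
The plan is to exhibit $A(\Sigma,\y)$ as a localization of $A(\Sigma,\inv)$ at a multiplicative set generated by prime elements, and then invoke Nagata's Theorem (Theorem~\ref{thm:nagata}) in the sharp form: localizing a Krull domain at a multiplicative set generated by prime elements induces an isomorphism on class groups. First I would unwind the definitions: writing $\y \setminus \inv = \{x_{n+1},\ldots,x_{n+p}\}$ for the non-invertible frozen variables (after relabeling) and letting $T \subseteq A(\Sigma,\inv)^\bullet$ denote the multiplicative set generated by $x_{n+1}$, $\ldots\,$,~$x_{n+p}$, one checks directly from the definition of $A(\Sigma,\inv)$ and $A(\Sigma,\y)$ that
\[
  T^{-1}A(\Sigma,\inv) = K[x,\,z,\,z^{-1} \mid x \in \mathcal X,\, z \in \y] = A(\Sigma,\y),
\]
since inverting the frozen variables in $\y \setminus \inv$ is exactly the difference between the two algebras.

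Next I would argue that $T$ is generated by prime elements of $A(\Sigma,\inv)$. This is where the real input lies: by Lemma~\ref{lem:killnoninv}, every non-invertible frozen variable is a prime element of the upper cluster algebra $\mathcal U(\Sigma,\inv)$; and by the hypothesis of the proposition we have $A(\Sigma,\inv) = \mathcal U(\Sigma,\inv)$, so each $x_i$ with $i \in [n+1,n+p]$ is a prime element of $A(\Sigma,\inv)$ itself. Hence $T$ is generated by prime elements.

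Finally, since $A(\Sigma,\inv)$ is a Krull domain by assumption, so is its localization $T^{-1}A(\Sigma,\inv) = A(\Sigma,\y)$, and Theorem~\ref{thm:nagata} (in the form noted immediately after its statement, for $S$ generated by prime elements) yields that $\overline{j_T} \colon \mathcal C(A(\Sigma,\inv)) \to \mathcal C(T^{-1}A(\Sigma,\inv)) = \mathcal C(A(\Sigma,\y))$ is an isomorphism, which is the claim. I do not expect a genuine obstacle here: the only nontrivial ingredient, primality of the non-invertible frozen variables, has already been isolated in Lemma~\ref{lem:killnoninv}, and the remaining two steps are a routine identification of rings and a direct citation of Nagata's Theorem. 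The one point to be careful about when writing the details is the verification that $T^{-1}A(\Sigma,\inv)$ really coincides with $A(\Sigma,\y)$ on the nose (not just up to isomorphism over $\mathcal F(\Sigma)$), which is immediate once one notes that both are generated over $K$ by $\mathcal X \cup \y$ with exactly the frozen variables inverted.
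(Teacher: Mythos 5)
Your proposal is correct and is essentially identical to the paper's own proof: both identify $A(\Sigma,\y)$ as the localization of $A(\Sigma,\inv)$ at the non-invertible frozen variables, use Lemma~\ref{lem:killnoninv} together with the hypothesis $A(\Sigma,\inv)=\mathcal U(\Sigma,\inv)$ to see these are prime elements, and conclude via Nagata's Theorem. No gaps.
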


\begin{proof}
  Note that $A(\Sigma,\y) = A(\Sigma,\inv)[y^{-1} \mid y \in \y\setminus \inv]$.
  By Lemma~\ref{lem:killnoninv} all non-invertible frozen variables are prime elements of $A(\Sigma,\inv)$.
  Thus Nagata's Theorem implies the claim.
\end{proof}

As $A(\Sigma,\y)$ is a cluster algebra in which all frozen variables are invertible, our earlier results can be applied to it.
Thus the computation of the class group for $A(\Sigma,\inv)$ may sometimes still be carried out.
For instance, this is the case if $\Sigma$ is an acyclic source-freezing seed (with respect to $\inv$).
Then $A(\Sigma,\inv) = \mathcal U(\Sigma,\inv)$, and as in \cite[Theorem 4.2]{M}, one sees that $A(\Sigma,\inv)$ is a finitely generated $K$-algebra, and in particular noetherian.
Hence $A(\Sigma,\inv)$ is a Krull domain with $\mathcal{C}(A(\Sigma,\inv)) \cong \mathcal{C}(A(\Sigma,\y))$, and we can compute the rank of the class group directly from the exchange matrix, as in Theorem~\ref{Thm:MainAcyclic}.

\begin{rema}
Outside these situations, we do not know how the choice of $\inv$ changes the factorization theoretical properties of a cluster algebra $A$.
In particular, we do not know whether non-invertible frozen variables are always prime elements in $A$.
In light of Lemma~\ref{lem:killnoninv} and the fact that they are always atoms (a necessary condition for them to be prime elements), it is tempting to hope so.
\end{rema}

\section{Further directions}
\label{sec:further}

While we have a satisfying answer to the computation of a class group of an acyclic seed, and some knowledge on the structure of the class group of a cluster algebra that is a Krull domain in general, several open questions remain.
We record some of them in this final section.

\begin{itemize}
\item We know how to determine the class group of a cluster algebra with acyclic seed (if all frozen variables are invertible); for a cluster algebra that is a Krull domain and has a finite presentation, we at least have an algorithm based on the primary decomposition of the variables of the initial seed.

  Several interesting examples of cluster algebras are not acyclic, but are locally acyclic.
  It would be interesting to have a procedure for computing the class group in these cases, in terms of quiver combinatorics.

\item While we know that every locally acyclic cluster algebra (with invertible frozen variables) is a Krull domain, and not every cluster algebra is a Krull domain, we lack an exact classification of which cluster algebras are Krull domains.

\item Any divisor closed submonoid of a Krull domain is itself a Krull monoid.
  It may be interesting to investigate the divisor-closed submonoid of a cluster algebra generated by its initial cluster, respectively, by all cluster variables.

\item Any Krull domain $A$ possesses a \emph{transfer homomorphism} to a \emph{monoid of zero-sum sequences} $\mathcal B(G_0)$, where $G_0$ is the subset of the class group of $A$ containing height-$1$ prime ideals (see, for instance, \cite[Chapter 3.2]{GHK} or \cite{G}).
  The atoms in $\mathcal B(G_0)$ are the minimal zero-sum sequences over $G_0$.
  If $A$ is an cluster algebra, then each cluster variable is an atom, and hence gives rise to such a minimal zero-sum sequence.
  It may be interesting to see which minimal zero-sum sequences arise in this way.
\end{itemize}

\bigskip
\noindent
\textbf{Acknowledgments.}
\phantomsection%
\addcontentsline{toc}{section}{Acknowledgments}%
We thank Alfred Geroldinger for pointing out the paper \cite{K1} to us and Kaveh Mousavand for raising the question about the irreducibility of $F$-polynomials.
We also thank the reviewers; their suggestions have undoubtedly improved the paper.
In particular, it was suggested by one of the reviewers to discuss non-invertible frozen variables (Section~\ref{sec:noninvertible}).

The first author was supported by FWF grant P30549-N26.
The second author was supported by EPSRC grants EP/N005457/1 and EP/M004333/1.
The third author was supported by the Austrian Science Fund (FWF) projects P26036-N26 and J4079-N32.
The first and the third author were supported by NAWI Graz.

\bibliographystyle{hyperalphaabbr}
\bibliography{krull_cluster}

\end{document}